\newcommand{\Zz}{\mathbb{Z}}
\newcommand{\Qq}{\mathbb{Q}}
\theoremstyle{plain}
\newtheorem{theorem}{Theorem}[section]    
\newtheorem{twisting lemma}[theorem]{Twisting lemma}
\newtheorem{lemma}[theorem]{Lemma}       
\newtheorem{proposition}[theorem]{Proposition}  
\theoremstyle{remark}
\newtheorem{definition}[theorem]{Definition}      
\newtheorem{remark}[theorem]{Remark}   
\def\cm{\hbox{\hbox{\rm C}\kern-5pt{\raise 1pt\hbox{$|$}}}}
\def\lhfl#1#2{\smash{\mathop{\hbox to 12mm{\leftarrowfill}}
\limits^{#1}_{#2}}}
\def\rhfl#1#2{\smash{\mathop{\hbox to 12mm{\rightarrowfill}}
\limits^{#1}_{#2}}}
\def\build#1_#2^#3{\mathrel{
\mathop{\kern 0pt#1}\limits_{#2}^{#3}}}
\def\htrait#1#2{\smash{\mathop{\hbox to 12mm{\hrulefill}}
\limits^{#1}_{#2}}}
\def\sxbullet{{\raise 2pt\hbox{\bf .}}}
\numberwithin{equation}{section}
\begin{document}

\title[Non-parametric sets over number fields]{Non-parametric sets of regular realizations over number fields}

\author{Joachim K{\"{o}}nig}

\email{koenig.joach@technion.ac.il}

\address{Department of Mathematics, Technion - Israel Institute of Technology, Haifa 32000, Israel}

\author{Fran\c cois Legrand}

\email{legrandfranc@technion.ac.il}

\address{Department of Mathematics, Technion - Israel Institute of Technology, Haifa 32000, Israel}

\date{\today}

\maketitle

\begin{abstract} 
Given a number field $k$, we show that, for many finite groups $G$, all the Galois extensions of $k$ with Galois group $G$ cannot be obtained by specializing any given finitely many Galois extensions $E/k(T)$ with Galois group $G$ and $E/k$ regular. Our examples include abelian groups, dihedral groups, symmetric groups, general linear groups over finite fields, etc. We also provide a similar conclusion while specializing any given infinitely many Galois extensions $E/k(T)$ with Galois group $G$ and $E/k$ regular of a certain type, under a conjectural ``uniform Faltings' theorem".
\end{abstract}

\section{Introduction}

\subsection{Topic of the paper}

Given a number field $k$, the {\it{Inverse Galois Problem}} (over $k$) asks whether every finite group $G$ occurs as the Galois group of a Galois extension $F/k$. A classical way to obtain such an extension of $k$ with Galois group $G$ consists in introducing an indeterminate $T$ and in producing a Galois extension $E/k(T)$ with Galois group $G$ and $E/k$ {\it{regular}}\footnote{i.e., $E \cap \overline{\Qq}=k$. See \S2.1 for basic terminology. For short, we say that $E/k(T)$ is a ``$k$-regular Galois extension with Galois group $G$" and that $G$ is a ``regular Galois group over $k$" if there exists such an extension.}. Then, by Hilbert's irreducibility theorem, there exist infinitely many $t_0 \in k$ such that the {\it{specialization}} $E_{t_0}/k$ of $E/k(T)$ at $t_0$ has Galois group $G$. Many finite groups have been realized as Galois groups over $k$ by this method; see, e.g., \cite{MM99}.

Then, one can ask whether this geometric approach to solve the Inverse Galois Problem is optimal. This is known as the {\it{Beckmann-Black Problem}}: given a finite group $G$, is every Galois extension $F/k$ with Galois group $G$ a specialization of some $k$-regular Galois extension $E_F/k(T)$ with the same Galois group? Despite its naive appearance, results on the Beckmann-Black Problem are sparse. Namely, the answer is known to be {\it{Yes}} for abelian groups, symmetric groups, alternating groups, and some dihedral groups; see, e.g., \cite[Theorem 2.2]{Deb01b} for more details and references. Moreover, no counter-example is known.

Here, we consider the following generalization of the Beckmann-Black Problem. Given a finite group $G$ and a positive integer $n$, we say that a set $S$ of $k$-regular Galois extensions of $k(T)$ with Galois group $G$ is {\it{$n$-parametric over $k$}} if any given $n$ Galois extensions of $k$ with Galois group $G$ occur as specializations of a same extension $E/k(T) \in S$; see Definition \ref{def1}. With this phrasing, the Beckmann-Black Problem asks whether every finite group $G$ has a 1-parametric set over $k$. The aim of this paper consists in proving that there exist many finite groups $G$ that have no $n$-parametric set over $k$ of a certain type.

\subsection{Finite sets}

Studying the parametricity of a given finite set $S$ of $k$-regular Galois extensions of $k(T)$ with Galois group $G$ has been started by the second author, in the particular case where the set $S$ consists of a {\it{single}} extension $E/k(T)$; see \cite{Leg15, Leg16a}. Following the terminology there, we say for short that the extension $E/k(T)$ is {\it{parametric over $k$}} if every Galois extension of $k$ with Galois group $G$ is a specialization of $E/k(T)$. In this context, the above definition of $n$-parametricity does of course not depend on the integer $n$. 

Strikingly, no finite group is known not to have a parametric extension over $k$ \footnote{Hence, no finite group is known not to have a finite 1-parametric set over $k$.} while only a few are known to have one. For example, recall that only four finite groups (namely, $\{1\}$, $\Zz/2\Zz$, $\Zz/3\Zz$, and $S_3$) are known to have a parametric extension over $\Qq$. This follows from the classical fact that these groups $G$ are the only ones to have a {\it{one parameter generic polynomial over $\Qq$}} (see \cite[page 194]{JLY02}), i.e., a polynomial $P(T,Y) \in \Qq(T)[Y]$ with Galois group $G$ which provides all the Galois extensions with Galois group $G$ of any given field $L$ of characteristic zero (by specializing the indeterminate $T$ properly). Of course, there might exist finite groups besides these four groups with a finite 1-parametric set over $\Qq$ (or even with a parametric extension over $\Qq$). But there is no such example available in the literature! 

Here, we offer a general approach to show that a given finite group $G$ has no finite 1-parametric set over $k$ \footnote{with the possible exception of the empty set; see Remark \ref{rem1}.}. Below, we give a few examples in the specific case $k=\Qq$ which illustrate the wide variety of groups we can cover. See Theorems 5.1-3 for our precise results. 

\begin{theorem} \label{thm intro}
Let $G$ be a non-trivial finite group whose order is neither a prime number nor 4.

\vspace{1mm}

\noindent
{\rm{(1)}} Assume that one of the following conditions holds:

\vspace{0.5mm}

{\rm{(a)}} $G$ has order prime to 6,

\vspace{0.5mm}

{\rm{(b)}} $G$ is abelian, but none of the following groups: $\Zz/6\Zz$, $\Zz/12\Zz$, 

$\Zz/2\Zz \times \Zz/4\Zz$, $\Zz/2\Zz \times \Zz/6\Zz$, $(\Zz/3\Zz)^2$, $(\Zz/2\Zz)^3$, $(\Zz/2\Zz)^4$,

\vspace{0.5mm}

{\rm{(c)}} $G$ is the dihedral group $D_n$ (with $2n$ elements), where $n$ is any 

positive integer which is neither a prime number nor in $\{4,6,8,9,12\}$,

\vspace{0.5mm}

{\rm{(d)}} $G={\rm{GL}}_n(\mathbb{F}_q)$, where $n \geq 2$ is any positive integer and $q \geq 3$ is 

any prime power such that $(n,q) \not=(2,3)$.

\vspace{0.5mm}

\noindent
Then, $G$ has no finite 1-parametric set over $\Qq$.

\vspace{1mm}

\noindent
{\rm{(2)}} Assume that one of the following conditions holds:

\vspace{0.5mm}

{\rm{(a)}} $G$ is abelian,

\vspace{0.5mm}

{\rm{(b)}} $G=S_n$ ($n \geq 6$).

\vspace{0.5mm}

\noindent
Then, $G$ has no parametric extension over $\Qq$.
\end{theorem}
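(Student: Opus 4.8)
The plan is to attack the two cases via a common mechanism: finitely many $k$-regular Galois extensions $E/k(T)$ with group $G$ can only produce Galois extensions $F/k$ whose \emph{ramification behaviour} is heavily constrained, and then to exhibit infinitely many (in fact, all but finitely many) $G$-extensions of $\Qq$ violating that constraint. The key structural fact I would rely on is that the branch points of a single $k$-regular extension $E/k(T)$ are finite in number and defined over $\overline{\Qq}$, and that for all but finitely many specialization values $t_0$, the primes of $k$ that ramify in $E_{t_0}/k$ are controlled by the branch points and the bad primes of the cover. Concretely, for each fixed $E/k(T)$ there is a finite set of primes $p$ of $k$ such that the inertia groups at any prime of $E_{t_0}/k$ above a prime $\ell \notin S_E$ are cyclic and, crucially, are conjugates of inertia groups of $E/k(T)$ at its branch points --- so the family of possible inertia/decomposition data realized by specializations of $E$ is finite.

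First I would make precise the invariant that distinguishes $G$-extensions: for a prime $\ell$ unramified in the relevant base data, the \emph{ramification type} (the conjugacy class in $G$ of the inertia generator, together with the residue degree) of $F/k$ at primes above $\ell$. The core lemma to prove is that, given finitely many extensions $E_1/k(T),\dots,E_r/k(T)$, only finitely many ramification types can occur among their specializations at primes outside a fixed finite exceptional set; this is where the geometry of the covers enters, via the theory of specialization of inertia (Beckmann's theorem / the behaviour of the specialization map on inertia groups). I would then construct, for each of the groups in (1) and (2), an infinite family of $G$-extensions $F/\Qq$ realizing infinitely many distinct ramification types --- for abelian $G$ this is immediate from class field theory, since one can prescribe ramification at arbitrarily chosen primes with arbitrarily chosen cyclic inertia; for $S_n$ and the other families one invokes the abundance of realizations with prescribed local behaviour.

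The decisive step is to turn ``infinitely many ramification types'' into an actual obstruction. The point is a pigeonhole / counting argument: each $E_i$ contributes only finitely many types, so the union over $i=1,\dots,r$ is still finite, whence some $G$-extension $F/\Qq$ in our infinite family has a ramification type realized by \emph{none} of the $E_i$, and therefore $F$ is a specialization of none of them. For part (2), where we forbid even a single parametric $E$ (with $r=1$ but all $n$), the same mechanism applies but must be sharpened: the relevant numerical invariant (e.g.\ the number of ramified primes with a prescribed inertia class, or discriminant growth) must be shown to be genuinely unbounded along specializations of a fixed cover, which is exactly what the finiteness of branch points forbids. The hypotheses excluding small orders, primes, $4$, and the sporadic abelian groups in (1)(b) are precisely the cases where there are too few ramification types to separate, or where a generic polynomial exists --- so I expect the main obstacle to be the bookkeeping that guarantees, for each listed family, that the supply of realizable types strictly exceeds what any finite collection of covers can match, and in the $S_n$ case, controlling the interaction between the $n$ distinct extensions being specialized simultaneously.
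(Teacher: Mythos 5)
There is a genuine gap, and it sits exactly at your ``core lemma''. If by \emph{ramification type} you mean the conjugacy class of inertia (plus residue data) in $G$, then the set of possible types is finite for trivial reasons --- $G$ has finitely many conjugacy classes --- and the same finite bound applies to \emph{every} family of $G$-extensions of $\Qq$, so no pigeonhole contradiction can ever emerge: a single cover with enough branch points can realize, via Beckmann's theorem, powers of all of its inertia classes, and nothing in your argument rules out that these exhaust all classes of $G$. If instead the type records which primes ramify (or the discriminant), then the lemma is simply false: specializations of one fixed cover realize unboundedly many ramified primes. The extension $\Qq(T)(\sqrt{T})/\Qq(T)$ has two branch points, yet its specializations $\Qq(\sqrt{t_0})/\Qq$ run through \emph{all} quadratic extensions, with arbitrarily many ramified primes --- this is also why $\Zz/2\Zz$ genuinely has a parametric extension, a case your mechanism does not explain (your claim that finiteness of branch points forces bounded discriminant growth along specializations is where part (2) of your argument breaks). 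So neither reading of the invariant gives the ``supply of types strictly exceeds what finitely many covers can match'' step.

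The paper's obstruction is of a different nature and requires an ingredient you never use: a non-trivial proper normal subgroup $H \lhd G$. One fixes a \emph{single} Galois extension $F/\Qq$ with group $G/H$ that embeds into infinitely many Galois extensions $L/\Qq$ with group $G$ (solving embedding problems infinitely often: Ikeda/Shafarevich for solvable kernel, GAR-realizations, Neumann for $S_n/A_n$, or D\`ebes' specialization result for index~$2$). If a finite set $\{E_1,\dots,E_s\}$ were $1$-parametric, each $L$ would be a specialization $(E_i)_{t_0}$, and Lemma \ref{subextensions} forces $F=(E_i^{H_j})_{t_0}$ for some fixed pair $(i,j)$ and infinitely many $t_0$; Proposition \ref{twisting} (the twisting lemma combined with Faltings' theorem) then forces the subcover $E_i^{H_j}/\Qq(T)$ to have genus $\le 1$. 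Hence everything reduces to the group-theoretic statement $m_{G/H,\Qq}\ge 2$, verified case by case for (a)--(d) using the classification of regular Galois groups in genus $\le 1$. Your Beckmann-style inertia idea does occur in the paper, but only as an auxiliary tool: in the dihedral case, where a genus-$1$ subcover may survive, the Specialization Inertia Theorem is used to show its specializations have ramification index in $\{1,2,3,4,6\}$ at good primes, contradicted by $D_n$-extensions with large prescribed local ramification; and the residual abelian cases and $S_6,S_7$ in part (2) need the Branch Cycle Lemma plus non-existence of rational points on twisted (hyper)elliptic curves. None of these steps can be recovered from a count of ramification types alone, so the proposal as it stands does not prove the theorem.
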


\noindent
All our examples are obtained as applications of several criteria which ensure that a given (finite) set $S$ of $k$-regular Galois extensions of $k(T)$ with Galois group $G$ is not 1-parametric over $k$; see \S3. These criteria can be applied to various finite groups and, for the sake of simplicity, we have only considered ``nice" families of groups here. The interested reader is then invited to give more examples. The proofs of our criteria and our explicit examples involve a wide variety of tools, such as a finiteness result for specializations (see \S2.3), solving embedding problems (see \S4), various results on regular Galois realizations (see, e.g., Lemma 6.7), as well as non-existence results of rational points on twisted (hyper)elliptic curves (see \S8). However, our method requires the existence of a non-trivial proper normal subgroup of $G$ with several properties. In particular, this cannot be used if $G$ is a simple group\footnote{In a recent paper of Neftin and the two authors, a completely different method is used to show that $A_n$ ($n \geq 4$) has no finite 1-parametric set over $k$; see \cite[Corollary 7.3]{KLN17}. We also mention this weaker result (which can be used with simple or non-simple groups): any given non-trivial regular Galois group $G$ over $k$ is the Galois group of a $k$-regular Galois extension of $k(T)$ that is not parametric over $k$; see \cite[Theorem 1.3]{Leg16a} and \cite[Theorem 2.2]{Koe17}.}.

Theorem 1.1 can be compared with a recent result of D\`ebes \cite{Deb16}. It is shown there that there exist many finite groups $G$ with this property: no $k$-regular Galois extension $E/k(T)$ with Galois group $G$  is ``parametric over $k(U)$" (with $U$ a new indeterminate), i.e., all the specializations of the extension $E(U)/k(U)(T)$ cannot provide all the Galois extensions of $k(U)$ with Galois group $G$. As an immediate consequence of \cite[Remark 2.3]{Deb16}, we obtain that this weaker conclusion of D\`ebes holds for every finite group $G$ which is covered by our method.

\subsection{Infinite sets}

Finding finite groups with no infinite 1-parametric set over $k$ is a much more challenging problem as this would disprove the Beckmann-Black Problem over the number field $k$ and as such sets occur more often than in the finite case. Stronger results can even happen: for example, recall that, as the polynomial $Y^N + T_1 Y^{N-1} + \cdots + T_{N-1}Y + T_N \in \Qq(T_1,\dots,T_N)[Y]$ is generic, the set of all $k$-regular Galois extensions of $k(T)$ with Galois group $S_N$ ($N \geq 1$) is $n$-parametric over $k$ for every positive integer $n$; see \cite[Proposition 3.3.10]{JLY02}.

Here, we consider the set of all $k$-regular Galois extensions of $k(T)$ with Galois group $G$ and (at most) $r_0$ {\it{branch points}}, where $r_0$ is any given positive integer. This is motivated by the fact that some of these sets are known to be 1-parametric over $k$. For example,

\noindent
- if $G$ is abelian, then the set of all $k$-regular Galois extensions of $k(T)$ with Galois group $G$ and exactly $r_0$ branch points is $1$-parametric over $k$, unless this set is empty; see \cite{Deb99a},

\noindent
- if $G=S_n$ ($n \geq 1$) and $k=\Qq$, then the above set is $1$-parametric over $k$ as soon as $r_0 \geq 2n-1$; see \cite[Proposition 1.2]{Bec94}.

\noindent
We prove that, for many finite groups $G$, the set of all $k$-regular Galois extensions of $k(T)$ with Galois group $G$ and at most $r_0$ branch points is not $n$-parametric over $k$ for large $n$ (depending on $r_0$). This is shown to hold under a conjectural ``uniform Faltings' theorem", asserting that the number of $k$-rational points on a given smooth curve $X$ defined over $k$ with genus $g$ at least 2 is bounded by a constant $B$ depending only on $g$ and $k$ (and not on $X$). See \S2.3, as well as Theorems 5.4-5, for precise statements.

\subsection{Framework of the paper} 

This paper is organized as follows. In \S2, we recall some background that is used in the sequel. \S3 contains our criteria to show the non-parametricity of a given set of regular realizations of a finite group $G$ over a number field $k$. In \S4, we solve embedding problems which are part of the assumptions of the criteria of \S3. \S5 is devoted to the statements of Theorems 5.1-5, while proofs are given in \S6. As to \S7 and \S8, they are devoted to auxiliary results that are used throughout the paper.

\vspace{3mm}

{\bf{Acknowledgments.}} We wish to thank Danny Neftin for helpful discussions, as well as the anonymous referee for many valuable comments. The first author is supported by the Israel Science Foundation (grant No. 577/15). The second author is supported by the Israel Science Foundation (grants No. 696/13, 40/14, and 577/15).

\section{Basics}

Below, we survey some standard machinery that will be used in the sequel. In \S2.1, we review some standard background on function field extensions. \S2.2 is devoted to basic properties of parametric sets and parametric extensions. Finally, we give finiteness results on the number of specialization points with prescribed specialization in \S2.3.

For this section, let $k$ be a number field, $G$ a finite group, and $T$ an indeterminate over $k$.

\subsection{Background on function field extensions}

Let $E/k(T)$ be a Galois extension with Galois group $G$ and such that $E/k$ is {\it{regular}} (i.e., $E \cap \overline{\Qq}=k$). For short, we say that $E/k(T)$ is a {\it{$k$-regular Galois extension with Galois group $G$.}} In the rest of the paper, by ``genus of the extension $E/k(T)$", we mean the genus of the function field $E$.

We propose the following definition:

\begin{definition} \label{def0}
We define the {\it{minimal genus of $G$ over $k$}}, denoted by $m_{G,k},$ as the smallest integer $g$ such that there exists at least one $k$-regular Galois extension of $k(T)$ that has Galois group $G$ and genus $g$. If $G$ is not a regular Galois group over $k$ \footnote{i.e., if there exists no $k$-regular Galois extension of $k(T)$ with Galois group $G$.}, we set $m_{G,k}=\infty$.
\end{definition}

\subsubsection{Branch points}

Recall that $t_0 \in \mathbb{P}^1(\overline{\Qq})$ is {\it{a branch point of $E/k(T)$}} if the prime ideal $(T-t_0)  \overline{\Qq}[T-t_0]$ \footnote{Replace $T-t_0$ by $1/T$ if $t_0 = \infty$.} ramifies in the integral closure of $\overline{\Qq}[T-t_0]$ in the compositum $E\overline{\Qq}$ of $E$ and $\overline{\Qq}(T)$ (in a fixed algebraic closure of $k(T)$). The extension $E/k(T)$ has only finitely many branch points, denoted by $t_1,\dots,t_r$ \footnote{One has $r=0$ if and only if $G$ is trivial.}.

\subsubsection{Inertia canonical invariant}

For each positive integer $n$, fix a primitive $n$-th root of unity $\zeta_n$. Assume that the system $\{\zeta_n\}_n$ is {\it{coherent}}, i.e., $\zeta_{nm}^n=\zeta_m$ for any positive integers $n$ and $m$.

To $t_i$ can be associated a conjugacy class $C_i$ of $G$, called the {\it{inertia canonical conjugacy class (associated with $t_i$)}}, in the following way. The inertia groups of the prime ideals lying over $(T-t_i) \, \overline{\Qq}[T-t_i]$ in the extension ${E}\overline{\Qq}/\overline{\Qq}(T)$ are cyclic conjugate groups of order equal to the ramification index $e_i$. Furthermore, each of them has a distinguished generator corresponding to the automorphism $(T-t_i)^{1/e_i} \mapsto \zeta_{e_i} (T-t_i)^{1/e_i}$ of $\overline{\Qq}(((T-t_i)^{1/e_i}))$. Then, $C_i$ is the conjugacy class of all the distinguished generators of the inertia groups of the prime ideals lying over $(T-t_i) \, \overline{\Qq}[T-t_i]$ in the extension ${E}\overline{\Qq}/\overline{\Qq}(T)$. The unordered $r$-tuple $({C_1},\dots,{C_r})$ is called {\it{the inertia canonical invariant of ${E}/k(T)$}}.

\subsubsection{Specializations}

Given $t_0 \in \mathbb{P}^1(k) \setminus \{t_1,\dots,t_r\}$, the residue extension of $E/k(T)$ at a prime ideal $\mathcal{P}$ lying over $(T-t_0) k[T-t_0]$ is denoted by ${E}_{t_0}/k$ and called {\it{the specialization of ${E}/k(T)$ at $t_0$}}. This does not depend on the prime $\mathcal{P}$  lying over $(T-t_0) k[T-t_0]$ as the extension ${E}/k(T)$ is Galois. The specialization $E_{t_0}/k$ is Galois with Galois group a subgroup of $G$, namely the decomposition group of ${E}/k(T)$ at $\mathcal{P}$.

\subsection{Parametric sets and parametric extensions}

\begin{definition} \label{def1}
Let $n$ be a positive integer and $S$ a set of $k$-regular Galois extensions of $k(T)$ with Galois group $G$.  We say that $S$ is {\it{an $n$-parametric set over $k$}} if, given $n$ extensions $F_1/k, \dots, F_n/k$ each of which has Galois group $G$, there exists some extension $E/k(T)$ in $S$ such that $F_1/k, \dots, F_n/k$ all are specializations of $E/k(T)$.
\end{definition}

\begin{remark} \label{rem1}
{\rm{(1)}} If $S$ is an $n$-parametric set over $k$ for a given integer $n \geq 1$, then it is $m$-parametric over $k$ for every integer $1 \leq m \leq n$.

\vspace{1mm}

\noindent
{\rm{(2)}} Assume that $S$ consists of only one extension $E/k(T)$. Then, given an integer $n \geq 1$, the set $S=\{E/k(T)\}$ is $n$-parametric over $k$ if and only if it is $1$-parametric over $k$. In this case, we will say for short that {\it{the extension $E/k(T)$ is parametric over $k$.}}

\vspace{1mm}

\noindent
{\rm{(3)}} The set $S$ consisting of no $k$-regular Galois extension of $k(T)$ with group $G$ is 1-parametric over $k$ if and only if $G$ is not a Galois group over $k$ \footnote{i.e., if there exists no Galois extension of $k$ with Galois group $G$.}. In this case, this set is $n$-parametric over $k$ for each $n \geq 1$.
\end{remark}

\subsection{Finiteness of the number of specialization points with prescribed specialization}

Proposition \ref{twisting} below, which is already implicitly used in \cite[\S3.3.5]{Deb99a}, will be used on several occasions.

\begin{proposition} \label{twisting}
Let $E/k(T)$ be a $k$-regular Galois extension with Galois group $G$ and genus at least 2, $H$ a subgroup of $G$, and $F/k$ a Galois extension with Galois group $H$. Then, there exist only finitely many points $t_0 \in \mathbb{P}^1(k)$ such that $F/k=E_{t_0}/k$.
\end{proposition}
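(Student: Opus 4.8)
The plan is to reduce the statement to Faltings' theorem by means of the twisting lemma. Let $X$ denote the smooth projective curve over $k$ with function field $E$, so that the inclusion $k(T) \subseteq E$ corresponds to a Galois cover $f \colon X \to \Pp^1$ with group $G$; by hypothesis the genus of $X$ equals the genus of $E/k(T)$, which is at least $2$. For a non-branch point $t_0 \in \Pp^1(k)$, the specialization $E_{t_0}/k$ is the residue extension of $E/k(T)$, and its Galois group is the decomposition group $D_{t_0} \le G$. Hence, if $E_{t_0}/k = F/k$, then $D_{t_0}$ is a subgroup of $G$ realizing $F$, and the reduction map identifies $\Gal(F/k)=H$ with $D_{t_0}$.

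First I would enumerate the relevant twisting data. Fixing an isomorphism $\Gal(F/k) \cong H$, the equality $E_{t_0}/k = F/k$ identifies $\Gal(F/k)$ with the decomposition group $D_{t_0}\le G$; composing the quotient $\Gal(\overline{\Qq}/k) \twoheadrightarrow \Gal(F/k)$ with such an identification yields a homomorphism $\phi \colon \Gal(\overline{\Qq}/k) \to G$ whose fixed field $\overline{\Qq}^{\ker \phi}$ equals $F$. Since there are only finitely many embeddings of $H$ into $G$, only finitely many homomorphisms $\phi$ of this shape arise; call them $\phi_1, \dots, \phi_s$. For each $\phi_j$ one forms, via the twisting construction, the twisted cover $f^{\phi_j} \colon X^{\phi_j} \to \Pp^1$ defined over $k$. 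As $X^{\phi_j}$ is obtained from $X$ by twisting its $k$-structure through the cocycle attached to $\phi_j$ (using the action of $G$ on $X$), it becomes isomorphic to $X$ over $\overline{\Qq}$; in particular $X^{\phi_j}$ is a smooth projective curve of the same genus as $X$, hence of genus at least $2$.

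The heart of the argument is the twisting lemma itself: for a non-branch point $t_0 \in \Pp^1(k)$, the equality $E_{t_0}/k = F/k$ realized through $\phi_j$ holds if and only if $t_0$ lies in the image $f^{\phi_j}(X^{\phi_j}(k))$ of the $k$-rational points of the twisted cover. Granting this correspondence, every non-branch point $t_0 \in \Pp^1(k)$ with $E_{t_0}/k = F/k$ lies in $\bigcup_{j=1}^s f^{\phi_j}(X^{\phi_j}(k))$. By Faltings' theorem each set $X^{\phi_j}(k)$ is finite, since $X^{\phi_j}$ has genus at least $2$; therefore this finite union is finite. Adjoining the finitely many branch points of $E/k(T)$, the set of $t_0 \in \Pp^1(k)$ with $F/k = E_{t_0}/k$ is finite, as desired.

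I expect the main obstacle to be the careful formulation and proof of the twisting lemma, namely the precise dictionary between specializations $k$-isomorphic to $F$ (realized through a given $\phi_j$) and $k$-rational points on the twisted cover $X^{\phi_j}$, together with the verification that the finitely many $\phi_j$ above genuinely capture every way in which $E_{t_0}/k$ can equal $F/k$ (this is where the finiteness of the set of embeddings $H \hookrightarrow G$ is used). Once that correspondence and enumeration are in place, the conclusion follows at once from the genus hypothesis and Faltings' theorem.
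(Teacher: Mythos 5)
Your proposal is correct and follows essentially the same route as the paper: both reduce the statement to Faltings' theorem via the Twisting Lemma of D\`ebes--Legrand \cite{DL12}, producing finitely many twisted curves defined over $k$ that are $\overline{\Qq}$-isomorphic to the curve with function field $E$ (hence of the same genus $\geq 2$) and capturing all specialization points yielding $F/k$ through their $k$-rational points. The paper simply cites the lemma as a black box (with the explicit bound $n \leq (2\cdot|G|)^{|G|}$ on the number of twists), whereas you unpack the construction; the mathematical content is the same.
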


\begin{proof}
Denote the genus of $E/k(T)$ by $g$. By the Twisting Lemma 3.2 of \cite{DL12}, there exist a positive integer $n \leq (2 \cdot |G|)^{|G|}$ and $n$ smooth curves $X_1, \dots, X_n$ defined over $k$ that satisfy the following properties:

\vspace{0.5mm}

\noindent
(1) $X_1,\dots,X_n$ all have genus $g$,

\vspace{0.5mm}

\noindent
(2) two different points $t_0$ and $t'_0$ in $\mathbb{P}^1(k)$ such that $E_{t_0}=F=E_{t'_0}$ give rise to two different $k$-rational points in $X_1(k) \cup \cdots \cup X_n(k)$.

\vspace{0.5mm}

\noindent
Assume that there exist infinitely many $t_0 \in \mathbb{P}^1(k)$ such that $F/k=E_{t_0}/k$. Then, by (2), $X_1(k) \cup \cdots \cup X_n(k)$ is infinite, i.e., some $X_{i_0}(k)$ is infinite as well. By Faltings' theorem, the genus of $X_{i_0}$ then is 0 or 1, which cannot happen by (1) and the assumption on $g$.
\end{proof}

Now, recall the following standard conjecture which provides Faltings' theorem for families of curves with the same given genus.

\vspace{2mm}

\noindent
{\bf{Uniformity Conjecture.}} {\it{Let $g \geq 2$ be an integer. Then, there exists a positive integer $B$, which depends only on $g$ and $k$, such that, for every smooth curve $X$ defined over $k$ with genus $g$, the set $X(k)$ of all $k$-rationals points on $X$ has cardinality at most $B$.}}

\vspace{2mm}

\noindent
By \cite[Theorem 1.1]{CHM97}, this conjecture is true under the Lang Conjecture, which asserts that the set of all $k$-rational points on a given variety of general type defined over $k$ is not Zariski dense.

\vspace{2mm}

Finally, combining the Twisting Lemma and the Uniformity Conjecture provides the following conjectural version of Proposition \ref{twisting}. As the proof is almost identical, details are left to the reader.

\begin{proposition} \label{twisting2}
Assume that the Uniformity Conjecture holds. Then, given an integer $g_0 \geq 2$,  there exists an integer $B \geq 1$, depending only on $|G|$, $k$, and $g_0$, and satisfying the following. Let $E/k(T)$ be a $k$-regular Galois extension with Galois group $G$ and genus between 2 and $g_0$, $H$ a subgroup of $G$, and $F/k$ a Galois extension with Galois group $H$. Then, there exist at most $B$ points $t_0 \in \mathbb{P}^1(k)$ such that $F/k=E_{t_0}/k$.
\end{proposition}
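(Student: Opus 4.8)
The plan is to imitate the proof of Proposition \ref{twisting} verbatim, the only new input being to keep track of how the constants depend on the data so that the final bound depends on $|G|$, $k$, and $g_0$ alone. First I would fix an arbitrary $k$-regular Galois extension $E/k(T)$ with Galois group $G$ and genus $g$ satisfying $2 \leq g \leq g_0$, a subgroup $H$ of $G$, and a Galois extension $F/k$ with group $H$. Applying the Twisting Lemma 3.2 of \cite{DL12} exactly as in Proposition \ref{twisting} produces smooth curves $X_1, \dots, X_n$ defined over $k$, all of genus $g$, with $n \leq (2 \cdot |G|)^{|G|}$, and enjoying properties (1) and (2) from that proof; in particular, by property (2), distinct points $t_0 \neq t'_0$ in $\mathbb{P}^1(k)$ with $E_{t_0} = F = E_{t'_0}$ give rise to distinct $k$-rational points in $X_1(k) \cup \cdots \cup X_n(k)$. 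The decisive observation is that the bound on the number $n$ of twisted curves depends only on $|G|$, and that each $X_i$ has genus confined to the fixed interval $[2, g_0]$.

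Next I would invoke the Uniformity Conjecture. For each integer $g$ with $2 \leq g \leq g_0$ it supplies a bound $B(g)$, depending only on $g$ and $k$, on the cardinality $|X(k)|$ for every smooth curve $X$ over $k$ of genus $g$. Setting $B' := \max_{2 \leq g \leq g_0} B(g)$, a quantity depending only on $g_0$ and $k$, each of the curves produced above satisfies $|X_i(k)| \leq B'$, whence
\[
|X_1(k) \cup \cdots \cup X_n(k)| \leq n \cdot B' \leq (2 \cdot |G|)^{|G|} \cdot B'.
\]

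Finally, the injectivity encoded in property (2) converts this into the desired control on specialization points: the assignment sending a point $t_0 \in \mathbb{P}^1(k)$ with $E_{t_0} = F$ to its associated $k$-rational point is injective, so the number of such $t_0$ is at most $|X_1(k) \cup \cdots \cup X_n(k)|$. It therefore suffices to take
\[
B := (2 \cdot |G|)^{|G|} \cdot \max_{2 \leq g \leq g_0} B(g),
\]
which depends only on $|G|$, $k$, and $g_0$, and not on the particular extensions $E/k(T)$ and $F/k$. The only point requiring genuine care --- and precisely the reason the statement is conditional --- is that the unconditional Faltings theorem used in Proposition \ref{twisting} yields mere finiteness with no uniform bound, so one genuinely needs the uniform estimate $B'$ furnished by the Uniformity Conjecture, together with the fact that it can be taken uniformly over the finite genus range $[2,g_0]$; every other step is a direct bookkeeping of constants already present in the proof of Proposition \ref{twisting}.
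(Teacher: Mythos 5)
Your proof is correct and follows exactly the route the paper intends: the paper states that the argument is ``almost identical'' to that of Proposition~\ref{twisting}, leaving the details to the reader, and your write-up is precisely that argument with the constants tracked --- the Twisting Lemma gives at most $(2\cdot|G|)^{|G|}$ twisted curves of genus $g \in [2,g_0]$, and the Uniformity Conjecture (maximized over the finitely many genera in that range) replaces Faltings' theorem to yield the uniform bound $B = (2\cdot|G|)^{|G|}\cdot\max_{2\leq g\leq g_0}B(g)$. No gaps; this matches the paper's proof.
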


\section{Criteria for non-parametricity}

For this section, let $k$ be a number field, $O_k$ the integral closure of $\Zz$ in $k$, and $G$ a finite group. 

The aim of this section is to give criteria for the group $G$ to have no parametric set over $k$ with suitable properties.

\subsection{A preliminary result}

Below, we explain how to derive parametric sets with various Galois groups, assuming we know at least one.

\begin{proposition} \label{prelim1}
Let $H$ be a normal sugbroup of $G$. Assume that every Galois extension $F/k$ with Galois group $G/H$ embeds into a Galois extension of $k$ with Galois group $G$ \footnote{i.e., there exists a Galois extension $L/k$ with Galois group $G$ such that $F \subset L$.}. Then, the following two conclusions hold.

\vspace{1mm}

\noindent
{\rm{(1)}} Every $1$-parametric set over $k$ for the group $G$ with positive cardinality at most $s$ gives rise to a $1$-parametric set over $k$ for the group $G/H$ with positive cardinality at most $s \cdot r$, where $r$ denotes the number of normal sugbroups $H'$ of $G$ such that $G/H' \cong G/H$.

\vspace{1mm}

\noindent
{\rm{(2)}} Furthermore, assume that $G$ has a unique normal subgroup $H'$ such that $G/H' \cong G/H$. Let $n$ be a positive integer. Then, every $n$-parametric set over $k$ for the group $G$ with positive cardinality at most $s$ gives rise to an $n$-parametric set over $k$ for the group $G/H$ with positive cardinality at most $s$. In particular, every parametric extension over $k$ with Galois group $G$ gives rise to a parametric extension over $k$ with Galois group $G/H$.
\end{proposition}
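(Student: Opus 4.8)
The plan is to build the required $n$-parametric set for $G/H$ by replacing each member of a given $n$-parametric set for $G$ by its $H$-fixed subextension, and then to invoke the uniqueness hypothesis to ensure that this one operation matches \emph{every} relevant specialization. Concretely, starting from an $n$-parametric set $S=\{E_1/k(T),\dots,E_m/k(T)\}$ for $G$ with $1\le m\le s$, I would set $S'=\{E_1^H/k(T),\dots,E_m^H/k(T)\}$, where $E_i^H$ denotes the subfield of $E_i$ fixed by $H$. As $H\trianglelefteq G$, each $E_i^H/k(T)$ is Galois with group $G/H$, and it is $k$-regular since $\overline{\Qq}\cap E_i^H\subseteq\overline{\Qq}\cap E_i=k$. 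Thus $S'$ is a set of $k$-regular Galois extensions of $k(T)$ with group $G/H$, of positive cardinality at most $s$.

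The second ingredient is the compatibility of specialization with the operation $E\mapsto E^H$: if $t_0\in\Pp^1(k)$ is not a branch point and the specialization $(E_i)_{t_0}/k$ has Galois group all of $G$, then the decomposition group $D$ at $t_0$ equals $G$, and $(E_i^H)_{t_0}$ is the subfield of $(E_i)_{t_0}$ fixed by $D\cap H$ under the canonical identification $\Gal((E_i)_{t_0}/k)\cong D$. This is the standard behaviour of residue extensions at an unramified prime; with $D=G$ it gives $(E_i^H)_{t_0}=(E_i)_{t_0}^{H}$.

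Now take Galois extensions $\overline{F_1}/k,\dots,\overline{F_n}/k$ with group $G/H$. By the embedding hypothesis each $\overline{F_\ell}$ lies in a Galois extension $F_\ell/k$ with group $G$; since $\overline{F_\ell}/k$ is Galois, $N_\ell:=\Gal(F_\ell/\overline{F_\ell})$ is a normal subgroup of $\Gal(F_\ell/k)\cong G$ with quotient isomorphic to $G/H$. By the uniqueness hypothesis, the image of $N_\ell$ under any identification $\Gal(F_\ell/k)\cong G$ must be $H$ itself, so $\overline{F_\ell}=F_\ell^H$. Applying the $n$-parametricity of $S$ to $F_1/k,\dots,F_n/k$ produces a single $E_j/k(T)\in S$ and points $t_1,\dots,t_n\in\Pp^1(k)$ with $F_\ell=(E_j)_{t_\ell}$; each such specialization has full group $G$, so the compatibility above yields $(E_j^H)_{t_\ell}=F_\ell^H=\overline{F_\ell}$. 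Hence the $\overline{F_\ell}$ are all specializations of the single extension $E_j^H/k(T)\in S'$, proving that $S'$ is $n$-parametric for $G/H$. The final assertion is the special case $n=s=1$, for which $S'=\{E^H/k(T)\}$ is a parametric extension.

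I expect the only genuinely delicate point to be the identification $\overline{F_\ell}=F_\ell^H$: a priori $N_\ell$ is merely \emph{some} normal subgroup with the right quotient, and it is exactly the uniqueness hypothesis (equivalently, the $\Aut(G)$-invariance of $H$ that it forces) that pins it down to $H$ uniformly, independently of both the embedding $\overline{F_\ell}\hookrightarrow F_\ell$ and the chosen isomorphism $\Gal(F_\ell/k)\cong G$. This uniformity is precisely what lets a single member $E_j^H$ capture all $n$ target extensions at once, thereby upgrading the $1$-parametric conclusion of part (1)---where several candidate subgroups $N$ must be retained, inflating the cardinality by the factor $r$---to full $n$-parametricity.
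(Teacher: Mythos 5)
Your proposal is correct and follows essentially the same route as the paper: both embed the given $G/H$-extensions into $G$-extensions, invoke the ($n$-)parametricity of the set for $G$, and descend via the compatibility of specialization with taking $H$-fixed subextensions (the paper's Lemma \ref{subextensions}), with the uniqueness hypothesis pinning down $\overline{F_\ell}=F_\ell^{H}$ exactly as the paper uses $r=1$. Two minor caveats that do not affect correctness: part (1) is only sketched in your closing remark (retaining all $r$ candidate fixed fields $E_i^{H_l}$ to get cardinality at most $s\cdot r$ is precisely the paper's $n=1$ argument, but you should spell it out), and the parenthetical asserting that uniqueness is \emph{equivalent} to the $\Aut(G)$-invariance of $H$ is too strong---uniqueness forces $H$ to be characteristic, but not conversely, and your identification of $N_\ell$ genuinely needs the full uniqueness you in fact use.
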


We need the following lemma.

\begin{lemma} \label{subextensions}
Let $H$ be a normal subgroup of $G$, $E/k(T)$ a $k$-regular Galois extension with Galois group $G$, and $t_0 \in \mathbb{P}^1(k)$, which is not a branch point of $E/k(T)$. Denote the distinct subextensions of $E/k(T)$ that have Galois group $G/H$ by $E_1/k(T), \dots, E_r/k(T)$. Assume that the specialization $E_{t_0}/k$ has Galois group $G$. Then, the extensions $(E_1)_{t_0}/k, \dots, (E_r)_{t_0}/k$ are the distinct subextensions of $E_{t_0}/k$ that have Galois group $G/H$.
\end{lemma}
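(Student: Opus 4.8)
The plan is to pass through the Galois correspondence for $E/k(T)$ and track how it reduces under specialization at $t_0$. First I would record that, since $E/k(T)$ is Galois with group $G$, a subextension $E'/k(T)$ has Galois group isomorphic to $G/H$ precisely when $E'=E^{N}$ for a normal subgroup $N \trianglelefteq G$ with $G/N \cong G/H$; denote by $N_1, \dots, N_r$ these (distinct) normal subgroups, so that $E_i = E^{N_i}$. Next, fix a prime $\mathcal{P}$ of $E$ lying over $(T-t_0)\,k[T-t_0]$. As $t_0$ is not a branch point of $E/k(T)$, the place $T=t_0$ is unramified in $E/k(T)$, so the inertia group at $\mathcal{P}$ is trivial and reduction of automorphisms yields an isomorphism $\psi \colon D_{\mathcal{P}} \xrightarrow{\sim} \mathrm{Gal}(E_{t_0}/k)$ from the decomposition group. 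By hypothesis $E_{t_0}/k$ has Galois group $G$, which forces $D_{\mathcal{P}}=G$ and hence gives an isomorphism $\psi \colon G \xrightarrow{\sim} \mathrm{Gal}(E_{t_0}/k)$.

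The heart of the argument is the identification $(E_i)_{t_0} = (E_{t_0})^{\psi(N_i)}$ for each $i$. The inclusion $\subseteq$ is immediate: if $x \in E_i = E^{N_i}$ is integral at $\mathcal{P}$ with reduction $\overline{x} \in (E_i)_{t_0}$, then for $\sigma \in N_i$ we have $\psi(\sigma)(\overline{x}) = \overline{\sigma(x)} = \overline{x}$, so $(E_i)_{t_0}$ is fixed by $\psi(N_i)$. For the reverse inclusion I would compare degrees over $k$ via residue degrees. The decomposition group of $\mathcal{P}$ in the Galois extension $E/E_i$ is $D_{\mathcal{P}} \cap \mathrm{Gal}(E/E_i) = G \cap N_i = N_i$, and trivial inertia makes the residue degree $f(\mathcal{P}/\mathcal{P}_i)$ equal to $|N_i|$, where $\mathcal{P}_i = \mathcal{P} \cap E_i$. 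Multiplicativity of residue degrees in the tower $k(T) \subseteq E_i \subseteq E$ then gives $[(E_i)_{t_0} : k] = f(\mathcal{P}_i/(T-t_0)) = |G|/|N_i| = [G:N_i]$, which matches $[(E_{t_0})^{\psi(N_i)} : k] = [G:N_i]$; combined with the inclusion, equality follows. In particular $(E_i)_{t_0}/k$ is Galois with group $\mathrm{Gal}(E_{t_0}/k)/\psi(N_i) \cong G/N_i \cong G/H$.

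Finally I would conclude by transporting the Galois correspondence through $\psi$. A subextension of $E_{t_0}/k$ with Galois group isomorphic to $G/H$ is exactly $(E_{t_0})^{\overline{N}}$ for a normal subgroup $\overline{N} \trianglelefteq \mathrm{Gal}(E_{t_0}/k)$ with quotient $\cong G/H$; since $\psi$ is an isomorphism, $\overline{N} = \psi(N)$ for a unique normal $N \trianglelefteq G$ with $G/N \cong G/H$, i.e. $N \in \{N_1, \dots, N_r\}$. Thus the subextensions of $E_{t_0}/k$ with group $\cong G/H$ are precisely $(E_1)_{t_0}, \dots, (E_r)_{t_0}$, and they are pairwise distinct because the subgroups $\psi(N_i)$ are. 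I expect the main obstacle to be the central identification $(E_i)_{t_0} = (E_{t_0})^{\psi(N_i)}$: it is where the full-decomposition hypothesis $D_{\mathcal{P}}=G$ is genuinely used, and it hinges on the compatible behavior of decomposition groups and residue degrees along the tower, together with the unramifiedness coming from $t_0$ avoiding the branch locus.
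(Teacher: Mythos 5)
Your proof is correct and takes essentially the same route as the paper's: both rest on the fact that when $E_{t_0}/k$ has full Galois group $G$, specialization intertwines the Galois correspondences of $E/k(T)$ and $E_{t_0}/k$, and then count the normal subgroups $N \trianglelefteq G$ with $G/N \cong G/H$. The only difference is one of detail: the paper asserts in one line that $E_{t_0}/(E_i)_{t_0}$ has Galois group $\mathrm{Gal}(E/E_i)$ as a standard fact, whereas you verify this identification explicitly via the decomposition-group isomorphism $\psi$ at an unramified prime and the residue-degree count.
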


\begin{proof}
Let $i$ between 1 and $r$. As $E_{t_0}/k$ has Galois group $G$, the specialized extension $E_{t_0}/(E_i)_{t_0}$ has Galois group ${\rm{Gal}}(E/E_i)$. As $E_1, \dots, E_r$ are distinct, the same is true for ${\rm{Gal}}(E/E_1), \dots, {\rm{Gal}}(E/E_r)$. Then, the extensions $(E_1)_{t_0}/k, \dots, (E_r)_{t_0}/k$ are distinct and they all have Galois group $G/H$. It then remains to notice that $r$ is the number of normal subgroups $H'$ of $G$ such that $G/H' \cong G/H$ to conclude.
\end{proof}

\begin{proof}[Proof of Proposition \ref{prelim1}]
Given a positive integer $n$, let $E_1/k(T), \dots,$ $E_s/k(T)$ be $k$-regular Galois extensions with Galois group $G$ giving rise to an $n$-parametric set over $k$ ($s \geq 1$). Denote the $r$ normal subgroups $H'$ of $G$ such that $G/H' \cong G/H$ by $H_1, \dots, H_r$. Let $F_1/k, \dots, F_n/k$ be $n$ Galois extensions each of which has Galois group $G/H$. By our embedding assumption, there exist $n$ Galois extensions $L_1/k, \dots, L_n/k$ each of which has Galois group $G$ and such that $F_j \subset L_j$ for each $j \in \{1,\dots,n\}$. As the set $\{E_1/k(T), \dots, E_s/k(T)\}$ is $n$-parametric over $k$, there exist $i \in \{1,\dots,s\}$ and specialization points $t_{0,1}, \dots, t_{0,n} \in \mathbb{P}^1(k)$ such that $L_j/k$ occurs as the specialization of $E_i/k(T)$ at $t_{0,j}$ for each $j \in \{1,\dots,n\}$. Let $j \in \{1,\dots,n\}$. By Lemma \ref{subextensions}, the extensions $(E_i^{H_1})_{t_{0,j}}/k, \dots, (E_i^{H_r})_{t_{0,j}}/k$ are the distinct subextensions of $L_j/k$ with Galois group $G/H$. Hence, $F_j/k = (E_i^{H_l})_{t_{0,j}}/k$ for some $l \in \{1,\dots,r\}$. First, assume that $n=1$. Then, $F_1/k = (E_i^{H_l})_{t_{0,1}}/k$ for some $l \in \{1,\dots,r\}$. In particular, the set $$\{E_1^{H_1}/k(T), \dots, E_1^{H_r}/k(T), \dots, E_s^{H_1}/k(T), \dots, E_s^{H_r}/k(T)\}$$ is $1$-parametric over $k$, as needed for part (1). Now, suppose that $r=1$. Then, $F_j/k = (E_i^{H})_{t_{0,j}}/k$ ($j \in \{1,\dots,n\}$). Hence, $\{E_1^{H}/k(T), \dots,$ $E_s^{H}/k(T)\}$ is $n$-parametric over $k$, as needed for part (2).
\end{proof}

\subsection{A criterion in minimal genus $\geq 2$}

Theorem \ref{genus2} below is the easiest and the most useful of our criteria.

\begin{theorem} \label{genus2}
Assume that $G$ has a normal subgroup $H$ such that the following two conditions hold:

\vspace{0.5mm}

\noindent
{\rm{(1)}} there exists at least one Galois extension of $k$ with Galois group $G/H$ which embeds into infinitely many Galois extensions of $k$ with Galois group $G$,

\vspace{0.5mm}

\noindent
{\rm{(2)}} $m_{{G/H},k} \geq 2$.

\vspace{0.5mm}

\noindent
Then, the following two conclusions hold.

\vspace{0.5mm}

\noindent
{\rm{(1)}} The group $G$ has no finite 1-parametric set over $k$. 

\vspace{0.5mm}

\noindent
{\rm{(2)}} Furthermore, assume that the Uniformity Conjecture of \S2.3 holds. Then, given an integer $r_0 \geq 1$, there exists an integer $n \geq 1$ (depending only on $r_0$, $k$, and $|G|$) such that the set which consists of all $k$-regular Galois extensions of $k(T)$ with Galois group $G$ and at most $r_0$ branch points is not $n$-parametric over $k$.
\end{theorem}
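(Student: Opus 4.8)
The plan is to argue by contradiction in both parts, transferring information from $G$ down to the quotient $G/H$ through the subextensions with Galois group $G/H$, and then invoking the finiteness results of \S2.3. Throughout I would write $H_1, \dots, H_r$ for the normal subgroups of $G$ with $G/H_i \cong G/H$, and fix, using condition (1), a single Galois extension $F_0/k$ with Galois group $G/H$ that embeds into infinitely many pairwise distinct Galois extensions $L^{(1)}/k, L^{(2)}/k, \dots$, each with Galois group $G$. The point of this fixed $F_0$ is that each $L^{(m)}$ then carries $F_0$ as a distinguished $G/H$-subextension, which is exactly the data Lemma \ref{subextensions} tracks under specialization.

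For part (1), suppose $S = \{E_1/k(T), \dots, E_s/k(T)\}$ were a finite $1$-parametric set over $k$ for $G$. Since each $L^{(m)}/k$ is then a specialization of some member of $S$ and $s$ is finite, a pigeonhole argument produces a single index $i$ for which $L^{(m)} = (E_i)_{t_m}$ for infinitely many $m$, at pairwise distinct points $t_m \in \mathbb{P}^1(k)$ (distinct because the $L^{(m)}$ are distinct). Each such specialization has full Galois group $G$, so $t_m$ is not a branch point and Lemma \ref{subextensions} identifies the $G/H$-subextensions of $L^{(m)}/k$ as exactly $(E_i^{H_1})_{t_m}/k, \dots, (E_i^{H_r})_{t_m}/k$; as $F_0$ is one of them, there is $l = l(m)$ with $F_0 = (E_i^{H_l})_{t_m}$. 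A second pigeonhole fixes one $l$ valid for infinitely many $m$. Now $E_i^{H_l}/k(T)$ is a $k$-regular Galois extension with Galois group $G/H_l \cong G/H$, hence of genus at least $m_{G/H,k} \geq 2$ by condition (2); applying Proposition \ref{twisting} to it (with the full group and $F = F_0$) bounds the number of $t_0$ with $F_0 = (E_i^{H_l})_{t_0}$ by a finite quantity, contradicting the infinitude of the $t_m$.

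For part (2), I would run the same reduction quantitatively. Given $r_0$, I would first note via Riemann--Hurwitz that any $k$-regular Galois realization of a quotient of $G$ with at most $r_0$ branch points has genus bounded by an explicit $g_0 = g_0(|G|, r_0)$ (the ramification indices being at most $|G|$). Since the branch points of a subextension form a subset of those of $E/k(T)$, for any $E/k(T)$ in the set and any $l$ the subextension $E^{H_l}/k(T)$ has at most $r_0$ branch points and hence genus between $2$ and $g_0$. Proposition \ref{twisting2} then supplies a bound $B = B(|G|, k, g_0) = B(|G|, k, r_0)$ on the number of $t_0$ with $F_0 = (E^{H_l})_{t_0}$ for each $l$, so a single $E/k(T)$ in the set realizes at most $C := rB$ pairwise distinct extensions $L/k$ with group $G$ containing $F_0$ (distinct $L$ forcing distinct realizing points, each detected by some $l$). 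Choosing $C+1$ of the $L^{(m)}$ shows the set is not $(C+1)$-parametric; since $r$ is bounded in terms of $|G|$ and ``not $m$-parametric'' implies ``not $n$-parametric'' for $n \geq m$ by Remark \ref{rem1}, this yields an $n$ depending only on $r_0$, $k$, and $|G|$.

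The hard part will be the uniform genus control in part (2): Proposition \ref{twisting2} is available only in the genus range $[2, g_0]$, so the argument hinges on simultaneously confirming that the relevant subextensions $E^{H_l}/k(T)$ have genus $\geq 2$ (which is forced by hypothesis (2), as they realize $G/H$) and that the branch-point bound $r_0$ descends to them and yields a uniform upper bound $g_0$ through Riemann--Hurwitz. A secondary point to handle carefully throughout is the bookkeeping that distinct realized extensions $L/k$ give distinct specialization points and that each such point is witnessed by the identity $F_0 = (E^{H_l})_{t_0}$; this is precisely what Lemma \ref{subextensions} provides once one knows the specialization attains the full group $G$, and it is what lets the finiteness statements of \S2.3 be applied at the level of the quotient.
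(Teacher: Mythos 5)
Your proposal is correct and follows essentially the same route as the paper's own proof: the paper factors the argument through Lemma \ref{lemma genus2}, using Lemma \ref{subextensions} plus a double pigeonhole over the members of $S$ and the normal subgroups $H_1,\dots,H_r$ to produce infinitely many (resp.\ at least $n/r$) specialization points of some $E^{H_l}/k(T)$ yielding the fixed extension $F_0/k$, then derives the contradiction from Proposition \ref{twisting} (resp.\ the bound $n\le rB$ from Proposition \ref{twisting2} after the same Riemann--Hurwitz genus bound $g_0(|G|,r_0)$ and the observation that branch points of $E^{H_l}/k(T)$ are among those of $E/k(T)$). Your quantitative bookkeeping in part (2), including the reduction of the dependence on $r$ to a dependence on $|G|$ via Remark \ref{rem1}, matches the paper's argument in substance.
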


\noindent
In the light of condition (2) above, it is useful to investigate which finite groups can occur as the Galois group of a $k$-regular Galois extension of $k(T)$ with genus $\leq 1$. We refer to \S7 for more on this classical topic. As to the embedding condition (1), it is studied in \S4.

Theorem \ref{genus2} is a straightforward application of Lemma \ref{lemma genus2} below.

\begin{lemma} \label{lemma genus2}
Let $r_0$ be a positive integer and $S$ a set of $k$-regular Galois extensions of $k(T)$ with Galois group $G$. Assume that $G$ has a normal subgroup $H$ such that the following two conditions hold:

\vspace{0.5mm}

\noindent
{\rm{(1)}} there exists a Galois extension of $k$ with group $G/H$ which embeds into infinitely many Galois extensions of $k$ with group $G$,

\vspace{0.5mm}

\noindent
{\rm{(2)}} for each extension $E/k(T) \in S$ and each $j \in \{1,\dots,r\}$, the genus of $E^{H_j}/k(T)$ is at least 2, where $H_1, \dots, H_r$ denote the distinct normal subgroups $H'$ of $G$ such that $G/H' \cong G/H$.

\vspace{0.5mm}

\noindent
Then, the following two conclusions hold.

\vspace{0.5mm}

\noindent
{\rm{(1)}} Assume that the set $S$ is finite. Then, there exist infinitely many Galois extensions $F/k$ with Galois group $G$ such that, for each $E/k(T) \in S$, the extension $F/k$ is not a specialization of $E/k(T)$.

\vspace{0.5mm}

\noindent
{\rm{(2)}} Assume that each $E/k(T)$ in $S$ has at most $r_0$ branch points and the Uniformity Conjecture holds. Then, there exist an integer $n$ (depending only on $r_0$, $k$, and $|G|$) and $n$ Galois extensions $L_1/k, \dots, L_n/k$ with Galois group $G$ such that, for each $E/k(T) \in S$, at least one of the extensions $L_1/k, \dots, L_n/k$ is not a specialization of $E/k(T)$.
\end{lemma}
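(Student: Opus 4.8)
\textbf{Proof proposal for Lemma \ref{lemma genus2}.}

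The plan is to combine the embedding hypothesis (1), which supplies an abundance of realizations of $G$ over $k$, with the finiteness results of \S2.3, which limit how often each such realization can arise as a specialization. First I would fix, using hypothesis (1), a Galois extension $F_0/k$ with Galois group $G/H$ together with infinitely many Galois extensions $L^{(1)}/k, L^{(2)}/k, \dots$ with Galois group $G$, each containing $F_0$. These $L^{(m)}/k$ are pairwise distinct, so I have an infinite supply of realizations of $G$ to feed into the candidate parametric set $S$. The key observation is that if some $L^{(m)}/k$ were a specialization $E_{t_0}/k$ of an extension $E/k(T) \in S$ at a non-branch point $t_0$, then, since $E_{t_0}/k$ has Galois group $G$, Lemma \ref{subextensions} would force $F_0/k$ to coincide with one of the specializations $(E^{H_j})_{t_0}/k$ for some $j \in \{1,\dots,r\}$.

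For part (1), I would argue by the pigeonhole principle. By hypothesis (2), each subextension $E^{H_j}/k(T)$ has Galois group $G/H \cong G/H_j$ and genus at least $2$, so Proposition \ref{twisting} applies: for each of the finitely many $E/k(T) \in S$ and each $j \in \{1,\dots,r\}$, there are only finitely many $t_0 \in \mathbb{P}^1(k)$ with $(E^{H_j})_{t_0}/k = F_0/k$. Since $S$ is finite and $r$ is finite, there are only finitely many specialization points $t_0$ across all $E/k(T) \in S$ and all $j$ at which $F_0/k$ can be recovered as $(E^{H_j})_{t_0}/k$. Each such point produces at most one specialization of each $E/k(T) \in S$, so only finitely many of the infinitely many extensions $L^{(m)}/k$ can occur as a specialization of any $E/k(T) \in S$ (via the mechanism of the previous paragraph, which is the only way $L^{(m)}/k$ can be obtained, given that it realizes all of $G$). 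Discarding these finitely many exceptions leaves infinitely many $L^{(m)}/k$ that are specializations of no extension in $S$, which is the desired conclusion.

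For part (2), I would run the same argument with explicit bounds coming from Proposition \ref{twisting2}. Since each $E/k(T) \in S$ has at most $r_0$ branch points, a Riemann--Hurwitz estimate bounds the genus of each $E/k(T)$, hence of each quotient $E^{H_j}/k(T)$, by a constant $g_0$ depending only on $r_0$ and $|G|$; combined with hypothesis (2) this places every relevant genus in the range $[2,g_0]$. Under the Uniformity Conjecture, Proposition \ref{twisting2} then yields a uniform bound $B = B(|G|,k,g_0)$ on the number of points $t_0$ with $(E^{H_j})_{t_0}/k = F_0/k$, for \emph{each} extension $E/k(T)$ with group $G$ (hence each $E^{H_j}/k(T)$) and genus in $[2,g_0]$, and each $j$. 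Consequently any single $E/k(T) \in S$ can have at most $r \cdot B$ distinct specializations equal to some $L^{(m)}/k$. Setting $n = r \cdot B + 1$ and selecting the first $n$ of the extensions $L^{(1)}/k, \dots, L^{(n)}/k$, no single $E/k(T) \in S$ can realize all of them as specializations, so at least one $L_j/k$ fails to be a specialization of $E/k(T)$; crucially $n$ depends only on $r_0$, $k$, and $|G|$ (note $r \leq$ the number of subgroups of $G$, hence is bounded in terms of $|G|$).

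The main obstacle I anticipate is verifying that the genus bound in part (2) is genuinely uniform over the whole (a priori infinite) set $S$: one must check that bounding the number of branch points by $r_0$, together with $|G|$, controls the genus of every $E^{H_j}/k(T)$ from above via Riemann--Hurwitz, so that Proposition \ref{twisting2} can be invoked with a single $g_0$ and hence a single constant $B$. The finiteness argument in part (1) needs no such uniformity and is comparatively routine; the delicate point throughout is the bookkeeping ensuring that the only route by which a full realization $L^{(m)}/k$ of $G$ can appear as a specialization is through one of the quotient extensions $E^{H_j}/k(T)$, which is exactly what Lemma \ref{subextensions} guarantees once we know the specialization has the full group $G$.
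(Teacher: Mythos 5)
Your proposal is correct and follows essentially the same route as the paper: Lemma \ref{subextensions} reduces the question to recovering the fixed quotient extension $F_0/k$ as a specialization of the subextensions $E^{H_j}/k(T)$, Proposition \ref{twisting} handles part (1), and the uniform Riemann--Hurwitz genus bound together with Proposition \ref{twisting2} and a pigeonhole count gives $n \leq r\cdot B$ in part (2). The only difference is cosmetic: you argue directly (finitely many exceptional points, hence finitely many captured $L^{(m)}/k$), whereas the paper phrases part (1) as a contradiction extracting infinitely many specialization points for a single pair $(i,j)$.
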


\begin{proof}
First, assume that $S$ is finite. As condition (1) holds, conclusion (1) holds if $S$ is empty. We may then assume that $S$ is not empty. Set $S= \{E_1/k(T), \dots, E_s/k(T)\}$ with $s \geq 1$. Assume that all but finitely many Galois extensions of $k$ with Galois group $G$ are specializations of $E_1/k(T), \dots, E_s/k(T)$. Let $F/k$ be a Galois extension with Galois group $G/H$ which embeds into infinitely many Galois extensions $L/k$ with Galois group $G$ (condition (1)). Let $L/k$ be such an extension. Up to dropping finitely many of them, we may assume that $L/k = (E_i)_{t_0}/k$ for some $i \in \{1,\dots,s\}$ and some $t_0 \in \mathbb{P}^1(k)$. By Lemma \ref{subextensions}, $(E_i^{H_1})_{t_0}/k, \dots, (E_i^{H_r})_{t_0}/k$ are the distinct subextensions of $L/k$ that have Galois group $G/H$. Hence, $F/k=(E_i^{H_j})_{t_0}/k$ for some $j \in \{1,\dots,r\}$. Repeat this trick infinitely many times to get that there exists $(i,j) \in \{1,\dots,s\} \times \{1,\dots,r\}$ and infinitely many $t_0 \in \mathbb{P}^1(k)$ such that $(E_i^{H_j})_{t_0}=F$. By Proposition \ref{twisting}, the genus of $E_i^{H_j}/k(T)$ is 0 or 1, which cannot happen by condition (2). Hence, conclusion (1) holds.

Now, assume that each extension $E/k(T) \in S$ has at most $r_0$ branch points and the Uniformity Conjecture holds. Let $F/k$ be a Galois extension with Galois group $G/H$ and a sequence $(L_n/k)_{n \geq1}$ of distinct Galois extensions with Galois group $G$ containing $F$. Given $n \geq 1$, assume that $L_1/k, \dots, L_n/k$ all are specializations of some extension $E/k(T)$ in $S$. Apply the same argument as before to get that there exists $j \in \{1,\dots,r\}$ and $n/r$ distinct points $t_0 \in \mathbb{P}^1(k)$ such that $F/k=(E^{H_j})_{t_0}/k$. As $E/k(T)$ has at most $r_0$ branch points, the same is true of the subextension $E^{H_j}/k(T)$. Hence, the genus of $E^{H_j}/k(T)$, which is at least 2 (condition (2)), is bounded by some integer $g_0$ which depends only on $|G|$ and $r_0$. One may then apply Proposition \ref{twisting2} to get that there exists a positive integer $B$, which depends only on $|G|$, $r_0$, and $k$, such that the set of points $t_0 \in \mathbb{P}^1(k)$ satisfying $(E^{H_j})_{t_0}/k=F/k$ has cardinality at most $B$. Hence, $n \leq r \cdot B$, thus ending the proof.
\end{proof}

\subsection{A criterion in minimal genus $\geq 1$}

Below, we give an analog of Theorem \ref{genus2}, where we relax condition (2) on the minimal genus. However, we have to add some local conditions in the embedding condition (1) and drop the conjectural conclusion (2).

\begin{theorem} \label{genus1}
Assume that $G$ has a normal subgroup $H$ such that the following two conditions hold.

\vspace{0.5mm}

\noindent
{\rm{(1)}} There exists an infinite set $\mathcal{S}$ of prime ideals of $O_k$ satisfying the following. Given $\mathcal{P} \in \mathcal{S}$, there exists a Galois extension of $k$ with group $G/H$, with ramification index at $\mathcal{P}$ not in $\{1,2,3,4,6\}$, and which embeds into infinitely many Galois extensions of $k$ with group $G$.

\vspace{0.5mm}

\noindent
{\rm{(2)}} One has $m_{{G/H},k} \geq 1$.

\vspace{0.5mm}

\noindent
Then, $G$ has no finite 1-parametric set over $k$. 
\end{theorem}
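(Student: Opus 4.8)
The plan is to argue by contradiction along the lines of the proof of Lemma \ref{lemma genus2}(1), but to replace the use of the hypothesis ``genus $\geq 2$'' by a ramification argument that excludes genus $1$. Suppose $S=\{E_1/k(T),\dots,E_s/k(T)\}$ is a finite $1$-parametric set over $k$ for $G$. Condition (1) in particular produces Galois extensions of $k$ with group $G$, so $G$ is a Galois group over $k$; by Remark \ref{rem1}(3) the empty set is then not $1$-parametric, and we may assume $s\geq 1$. Let $H_1,\dots,H_r$ be the distinct normal subgroups $H'$ of $G$ with $G/H'\cong G/H$, so that the finitely many subextensions $E_i^{H_j}/k(T)$ ($1\le i\le s$, $1\le j\le r$) all have group $G/H$. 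By condition (2) none of them has genus $0$. The combinatorial key is Riemann--Hurwitz: a genus-$1$ Galois cover of $\mathbb{P}^1$ with group $G/H$ has ramification-index tuple $(e_i)$ satisfying $\sum(1-1/e_i)=2$, whose only solutions are $(3,3,3)$, $(2,4,4)$, $(2,3,6)$, and $(2,2,2,2)$; in particular all its ramification indices lie in $\{2,3,4,6\}$. Since $\mathcal{S}$ is infinite and the family $\{E_i^{H_j}/k(T)\}$ is finite, I can fix a prime $\mathcal{P}\in\mathcal{S}$ that is a prime of good reduction for every $E_i^{H_j}/k(T)$.

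Next I would feed condition (1) at this $\mathcal{P}$: there is a Galois extension $F/k$ with group $G/H$ and $e_\mathcal{P}(F/k)\notin\{1,2,3,4,6\}$, embedding into infinitely many distinct Galois extensions $L/k$ with group $G$. Using $1$-parametricity together with Lemma \ref{subextensions} exactly as in Lemma \ref{lemma genus2}(1): after discarding finitely many of the $L$'s, each remaining $L$ equals $(E_i)_{t_0}/k$ for some $i$ and some non-branch point $t_0\in\mathbb{P}^1(k)$, and then $F/k=(E_i^{H_j})_{t_0}/k$ for some $j$. Pigeonholing over the finitely many pairs $(i,j)$ yields a single pair $(i_0,j_0)$ and infinitely many $t_0\in\mathbb{P}^1(k)$ with $F/k=(E_{i_0}^{H_{j_0}})_{t_0}/k$. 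By Proposition \ref{twisting} the genus of $E_{i_0}^{H_{j_0}}/k(T)$ cannot be $\geq 2$, and by condition (2) it is not $0$; hence it equals exactly $1$.

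The contradiction then comes from the local condition. Since $E_{i_0}^{H_{j_0}}/k(T)$ has genus $1$, all its branch-point ramification indices lie in $\{2,3,4,6\}$, and $\mathcal{P}$ was chosen to reduce well for it. By the standard good-reduction behaviour of ramification in specializations (a Beckmann-type result on tame specializations at good primes), for such a prime and any non-branch point $t_0\in\mathbb{P}^1(k)$, the prime $\mathcal{P}$ is unramified in $(E_{i_0}^{H_{j_0}})_{t_0}/k$ unless the reduction of $t_0$ in $\mathbb{P}^1$ meets the reduction of some branch point $t_i$, in which case the inertia is cyclic generated by a power of the branch cycle and so $e_\mathcal{P}$ divides $e_i$. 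Either way $e_\mathcal{P}((E_{i_0}^{H_{j_0}})_{t_0}/k)$ is a divisor of some element of $\{2,3,4,6\}$, and since the divisors of $2,3,4,6$ are precisely $\{1,2,3,4,6\}$, we get $e_\mathcal{P}\in\{1,2,3,4,6\}$. Taking any one of the infinitely many $t_0$ with $F/k=(E_{i_0}^{H_{j_0}})_{t_0}/k$ gives $e_\mathcal{P}(F/k)\in\{1,2,3,4,6\}$, contradicting the choice of $F$. Thus $G$ has no finite $1$-parametric set over $k$.

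The main obstacle is the passage from ``genus $1$'' to ``ramification index in $\{1,2,3,4,6\}$'': it relies on the Riemann--Hurwitz classification above (elementary) together with the good-reduction control of ramification indices of specializations, which is the technical heart and must be set up carefully, in particular the exclusion of the finitely many bad primes and the reduction of $t_0$ to all of $\mathbb{P}^1$ (including $\infty$). The exact numerical match between the divisors of $\{2,3,4,6\}$ and the forbidden set $\{1,2,3,4,6\}$ is what makes the local hypothesis in condition (1) precisely the right one.
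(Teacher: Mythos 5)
Your proposal is correct and follows essentially the same route as the paper's own proof (via Lemma \ref{lemma genus1}): choose a prime $\mathcal{P} \in \mathcal{S}$ that is good for all the finitely many subextensions $E_i^{H_j}/k(T)$, take $F/k$ from condition (1) at that $\mathcal{P}$, run the pigeonhole-plus-Proposition \ref{twisting} argument to force some $E_{i_0}^{H_{j_0}}/k(T)$ to have genus exactly $1$, and then combine the Riemann--Hurwitz classification of genus-$1$ ramification types (Proposition \ref{RH genus 1}) with good-reduction control of inertia in specializations to contradict $e_{\mathcal{P}}(F/k) \notin \{1,2,3,4,6\}$. The ``Beckmann-type'' result you identify as the technical heart is precisely the Specialization Inertia Theorem of \cite{Leg16c} invoked in the paper, including the integrality caveat on branch points, which the paper likewise handles by discarding finitely many further primes.
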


\noindent
{\it{Addendum}} \ref{genus1}. In the special case $k=\Qq$, one may replace condition (1) by the following weaker variant.

\vspace{1mm}

\noindent
(1)' There exists an infinite set $\mathcal{S}$ of prime numbers satisfying the following. Given $p \in \mathcal{S}$, there exists a Galois extension of $\Qq$ with Galois group $G/H$, with ramification index at $p$ at least 3, and which embeds into infinitely many Galois extensions of $\Qq$ with Galois group $G$.

\vspace{2mm}

Theorem \ref{genus1} is a straightforward application of Lemma \ref{lemma genus1} below.

\begin{lemma} \label{lemma genus1}
Let $S$ be a finite set of $k$-regular Galois extensions of $k(T)$ with Galois group $G$. Assume that $G$ has a normal subgroup $H$ such that the following two conditions hold.

\vspace{0.5mm}

\noindent
{\rm{(1)}} There exists an infinite set $\mathcal{S}$ of prime ideals of $O_k$ satisfying the following. Given $\mathcal{P} \in \mathcal{S}$, there exists a Galois extension of $k$ with group $G/H$, with ramification index at $\mathcal{P}$ not in $\{1,2,3,4,6\}$, and which embeds into infinitely many Galois extensions of $k$ with group $G$.

\vspace{0.5mm}

\noindent
{\rm{(2)}} For each extension $E/k(T) \in S$ and each $j \in \{1,\dots,r\}$, the genus of $E^{H_j}/k(T)$ is at least 1, where $H_1, \dots, H_r$ denote the distinct normal subgroups $H'$ of $G$ such that $G/H' \cong G/H$.

\vspace{0.5mm}

\noindent
Then, there exist infinitely many Galois extensions $F/k$ with Galois group $G$ such that, for each extension $E/k(T) \in S$, the extension $F/k$ is not a specialization of $E/k(T)$. Moreover, in the special case $k=\Qq$, one may replace condition {\rm{(1)}} by condition {\rm{(1)}}' of Addendum \ref{genus1}.
\end{lemma}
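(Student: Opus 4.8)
The plan is to follow the proof of Lemma \ref{lemma genus2} essentially verbatim up to the point where one is reduced to a single subextension of genus exactly $1$, and then to use the extra ramification hypothesis (1) (or (1)$'$) to eliminate that remaining case. As in that proof, I assume for contradiction that only finitely many Galois extensions of $k$ with group $G$ fail to be specializations of extensions in the finite set $S=\{E_1/k(T),\dots,E_s/k(T)\}$, and I aim for a contradiction. Write $H_1,\dots,H_r$ for the normal subgroups of $G$ with $G/H_\ell\cong G/H$. There are only finitely many pairs $(i,j)\in\{1,\dots,s\}\times\{1,\dots,r\}$, hence only finitely many primes of $O_k$ that are \emph{bad} for one of the finitely many $k$-regular extensions $E_i^{H_j}/k(T)$; since $\mathcal{S}$ is infinite, I may fix at the outset a prime $\mathcal{P}\in\mathcal{S}$ that is a good prime for \emph{every} $E_i^{H_j}/k(T)$ simultaneously.

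Next I invoke hypothesis (1) for this $\mathcal{P}$: there is a Galois extension $F/k$ with group $G/H$, with ramification index at $\mathcal{P}$ outside $\{1,2,3,4,6\}$, embedding into infinitely many Galois extensions $L/k$ with group $G$. Under the standing assumption, all but finitely many such $L$ are specializations of some $E_i/k(T)$; since each $L$ then has group $G$, the point $t_0$ is not a branch point and Lemma \ref{subextensions} applies, yielding $F/k=(E_i^{H_j})_{t_0}/k$ for some $(i,j)$ and some $t_0\in\mathbb{P}^1(k)$. As there are finitely many pairs $(i,j)$ but infinitely many distinct $L$, and distinct $L$ with the same $i$ force distinct $t_0$, a pigeonhole argument produces a single pair $(i,j)$ and infinitely many $t_0\in\mathbb{P}^1(k)$ with $(E_i^{H_j})_{t_0}/k=F/k$. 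By Proposition \ref{twisting} the genus of $E_i^{H_j}/k(T)$ is then $0$ or $1$; hypothesis (2) excludes $0$, so it equals exactly $1$.

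It remains to contradict the special ramification of $F$ at $\mathcal{P}$. Since $E_i^{H_j}/k(T)$ is a $k$-regular Galois cover of genus $1$, Riemann--Hurwitz forces its branch-point ramification indices $e_\ell$ to satisfy $\sum_\ell (1-1/e_\ell)=2$, whence every $e_\ell$ lies in $\{2,3,4,6\}$ (the only signatures being $(2,3,6)$, $(2,4,4)$, $(3,3,3)$, $(2,2,2,2)$); I take this genus-$1$ classification, together with the needed facts on genus-$\leq 1$ realizations, from \S7. Because $\mathcal{P}$ was chosen good for $E_i^{H_j}/k(T)$, the standard description of the inertia of a specialization at a good prime shows that the ramification index of $F=(E_i^{H_j})_{t_0}/k$ at $\mathcal{P}$ divides one of the $e_\ell$, and so lies in $\{1,2,3,4,6\}$. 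This contradicts the choice of $F$ in hypothesis (1), proving the lemma. For $k=\Qq$ with the weaker hypothesis (1)$'$ the same scheme applies; the only change is that one must replace the bound ``index in $\{1,2,3,4,6\}$'' by the sharper statement that the ramification index at a rational prime $p$ of a specialization of a genus-$1$ $\Qq$-regular cover actually lies in $\{1,2\}$, so that ``index $\geq 3$'' already yields the contradiction. This finer bound over $\Qq$ is precisely what the sharper analysis of genus-$\leq 1$ $\Qq$-regular realizations in \S7--\S8 is meant to supply.

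The main obstacle is this last step: making precise how the ramification index of $(E_i^{H_j})_{t_0}/k$ at the chosen prime is controlled by the branch-point indices $e_\ell\in\{2,3,4,6\}$ (and checking that ``good prime for $E_i^{H_j}$'' really licenses the inertia computation), and, for $k=\Qq$, upgrading this to the bound $\{1,2\}$. Everything preceding it is a routine adaptation of the proof of Lemma \ref{lemma genus2}, the single genuinely new input being the Riemann--Hurwitz classification of genus-$1$ signatures; the real work lies in the specialization-inertia bookkeeping at good primes and in the $\Qq$-refinement.
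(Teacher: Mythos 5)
Your proposal is correct and follows the paper's own proof essentially step for step: choose a prime of $\mathcal{S}$ that is good for all the finitely many subextensions $E_i^{H_j}/k(T)$, pigeonhole as in Lemma \ref{lemma genus2} to force some $E_i^{H_j}/k(T)$ of genus exactly $1$ specializing infinitely often to $F$, use Proposition \ref{RH genus 1} to confine the inertia orders to $\{2,3,4,6\}$, and conclude via the Specialization Inertia Theorem of \cite{Leg16c} (your ``standard description of the inertia of a specialization at a good prime''), with the $\Qq$-refinement coming exactly as you say from Proposition \ref{RH genus 1}(2), which leaves only the signature $(2,2,2,2)$ and hence ramification indices in $\{1,2\}$. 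The one point you flag as an obstacle is handled in the paper simply by citing that theorem, together with a footnote noting that the integrality of the branch points and their inverses at $\mathcal{P}$ can be arranged by discarding finitely many more primes of $\mathcal{S}$.
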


\begin{proof}
As in the case $m_{G/H,k} \geq 2$, one may assume $S \not= \emptyset$. Set $S= \{E_1/k(T), \dots, E_s/k(T)\}$ with $s \geq 1$. Assume that all but finitely many Galois extensions of $k$ with group $G$ are specializations of $E_1/k(T), \dots, $ $E_s/k(T)$. Let $\mathcal{P}$ be one of the infinitely many primes of $\mathcal{S}$ of condition (1) which is a {\it{bad prime}}\footnote{See \cite[Definition 2.6]{Leg16c} for the precise definition. Here, we only use the standard fact that a given finite $k$-regular Galois extension of $k(T)$ has only finitely many bad primes.} for none of the subextensions $E_1^{H_1}/k(T), \dots,$ $E_1^{H_r}/k(T), \dots, E_s^{H_1}/k(T), \dots, E_s^{H_r}/k(T).$ Let $F/k$ be a Galois extension with group $G/H$, ramification index at $\mathcal{P}$ not in $\{1,2,3,4,6\}$, and which embeds into infinitely many Galois extensions $L/k$ with group $G$ (condition (1)). As in the case $m_{G/H,k} \geq 2$, one finds $(i,j) \in \{1,\dots,s\} \times \{1,\dots,r\}$ and infinitely many $t_0 \in \mathbb{P}^1(k)$ such that $(E_i^{H_j})_{t_0}=F$. Then, by Proposition \ref{twisting} and condition (2), the genus of $E_i^{H_j}/k(T)$ is 1. Hence, by Proposition \ref{RH genus 1}, the inertia canonical invariant of this extension consists only of conjugacy classes of elements of order 2, 3, 4 or 6. As $\mathcal{P}$ is a good prime for $E_i^{H_j}/k(T)$, we may apply the Specialization Inertia Theorem \cite[\S2.2.3]{Leg16c}\footnote{The assumption requiring the branch points of $E_i^{H_j}/k(T)$ and their inverses to be integral with respect to $\mathcal{P}$ can also be satisfied (up to dropping more primes).} to get that the ramification index of $\mathcal{P}$ in $F/k$ is 1, 2, 3, 4 or 6, a contradiction.

To get the conclusion under condition (1)' in the case $k=\Qq$, it suffices to see that, with the same notation as above and by Proposition 7.2, the inertia canonical invariant of $E_i^{H_j}/\Qq(T)$ should consist only of conjugacy classes of involutions of $G/H$.
\end{proof}

\subsection{Criteria with no explicit assumption on the minimal genus}

In Theorem \ref{genus0} below, we do not require any assumption on the minimal genus of quotients of $G$, at the cost of 
making the conclusion weaker.

\begin{theorem} \label{genus0}
Assume that $G$ has a normal subgroup $H$ such that the following two conditions hold.

\vspace{0.5mm}

\noindent
{\rm{(1)}} There exists a Galois extension of $k$ with group $G/H$ which embeds into infinitely many Galois extensions of $k$ with  group $G$.

\vspace{0.5mm}

\noindent
{\rm{(2)}} For each normal subgroup $H'$ of $G$ such that $G/H' \cong G/H$, there exist 5 conjugacy classes $C_1, \dots, C_5$ of $G$ such that the following holds for each $i \in \{1,\dots,5\}$:

\vspace{0.3mm}

{\rm{(a)}} $C_i$ is not contained in $H'$,

\vspace{0.3mm}

{\rm{(b)}} each element of $C_i$ generates a maximal cyclic subgroup of $G$ \footnote{By this, we mean a cyclic subgroup of $G$ which is contained in no strictly larger cyclic subgroup of $G$.}, 

\vspace{0.3mm}

{\rm{(c)}} for each $j \in \{1,\dots,5\} \setminus \{i\}$, the class $C_i$ is not a power of $C_j$,

\vspace{0.3mm}

{\rm{(d)}} $C_i$ belongs to the inertia canonical invariant of some $k$-regular 

Galois extension of $k(T)$ with Galois group $G$.

\vspace{0.5mm}

\noindent
Then, given a $k$-regular Galois extension $E/k(T)$ with Galois group $G$, there exist infinitely many Galois extensions of $k$ with Galois group $G$ each of which is not a specialization of $E/k(T)$. In particular, $G$ has no parametric extension over $k$.
\end{theorem}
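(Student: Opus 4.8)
The plan is to argue by contradiction, following the template of the proof of Lemma \ref{lemma genus2}. So suppose that all but finitely many Galois extensions of $k$ with Galois group $G$ are specializations of the given extension $E/k(T)$; I will produce infinitely many that are not, a contradiction. Write $(D_1,\dots,D_N)$ for the inertia canonical invariant of $E/k(T)$, and let $H_1,\dots,H_r$ be the distinct normal subgroups $H'$ of $G$ with $G/H'\cong G/H$. First I would locate a quotient of small genus. By condition (1) there is an extension $F/k$ with group $G/H$ lying in infinitely many Galois extensions $L/k$ with group $G$; cofinitely many of these are specializations of $E/k(T)$, so by Lemma \ref{subextensions} and the pigeonhole principle there is a fixed index $j$ with $F/k=(E^{H_j})_{t_0}/k$ for infinitely many $t_0\in\mathbb{P}^1(k)$. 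Proposition \ref{twisting} then forces the genus of $E^{H_j}/k(T)$ to be $0$ or $1$. Put $H'=H_j$.

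Next I would bound the relevant inertia classes. Since $E^{H'}/k(T)$ has genus $\leq 1$, the Riemann--Hurwitz formula for the $G/H'$-cover shows it has at most $4$ branch points; equivalently, at most $4$ of the branch points of $E/k(T)$ have inertia class $D_i$ not contained in $H'$. Now I invoke condition (2) for this particular $H'$, giving classes $C_1,\dots,C_5$ satisfying (a)--(d). The key combinatorial observation is that each class $D_i$ is a ``root'' of at most one $C_m$: if $C_m$ and $C_{m'}$ were both powers of a common class $D$, choosing $g\in D$ and writing $C_m,C_{m'}$ as the classes of $g^a,g^b$, condition (b) forces $\langle g^a\rangle=\langle g\rangle=\langle g^b\rangle$ (a maximal cyclic subgroup contained in $\langle g\rangle$ must equal it), whence $C_{m'}$ is a power of $C_m$, contradicting (c). Moreover, by (a) no $C_m$ can be a power of a class contained in $H'$ (such a power would again lie in $H'$). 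Hence every $C_m$ that is a power of some $D_i$ is a power of one of the $\leq 4$ classes with $D_i$ not contained in $H'$, and distinct $C_m$ use distinct such $D_i$; since there are $5>4$ of the $C_m$, at least one class $C_{m_0}$ is a power of no $D_i$ at all. By the same maximality argument, every generator of $\langle C_{m_0}\rangle$ is likewise a power of no $D_i$.

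Finally I would manufacture the non-specializations. By condition (2)(d), $C_{m_0}$ occurs in the inertia canonical invariant of some $k$-regular extension $\widetilde E/k(T)$ with group $G$; specializing $\widetilde E$ at points meeting that branch point, while varying an auxiliary good prime $\mathcal{P}$ of residue degree $1$, yields (via the Specialization Inertia Theorem of \cite{Leg16c} and Hilbert's irreducibility theorem) infinitely many pairwise distinct Galois extensions $L/k$ with group $G$, each ramified at its prime $\mathcal{P}$ with inertia class a generator of $\langle C_{m_0}\rangle$, and with $\mathcal{P}$ a good prime for $E/k(T)$. If such an $L$ were a specialization $E_{t_0}/k$, the Specialization Inertia Theorem applied to $E/k(T)$ would make the inertia class of $L$ at $\mathcal{P}$ a power of some $D_i$, contradicting the conclusion of the previous step. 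Thus these $L/k$ are not specializations of $E/k(T)$, giving the desired contradiction. I expect the main obstacle to be this last step: arranging the specializations of $\widetilde E$ so that the full Galois group $G$ is preserved and, simultaneously, the inertia at the chosen good prime $\mathcal{P}$ generates all of $\langle C_{m_0}\rangle$ (not merely a proper power) while $\mathcal{P}$ avoids the finitely many bad primes of $E/k(T)$; controlling the residue degree so that the local power is coprime to the order is where the maximal-cyclic hypothesis (b) and the precise form of the Specialization Inertia Theorem must be used carefully.
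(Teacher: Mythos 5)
Your proposal is correct, and all four hypotheses (a)--(d) enter in essentially the same roles as in the paper, but you organize the contradiction in the opposite direction and unfold a result the paper cites as a black box. The paper argues forward: for each normal subgroup $H'$ with $G/H'\cong G/H$, it invokes \cite[Theorem 4.2]{Leg16c} (which says precisely that, under your cofinite-specialization hypothesis, every class realizable as an inertia class of some $k$-regular $G$-realization is a power of some class $D_i$ of $E/k(T)$), then uses (b) to invert the power relation (${C'_i}=C_i^{b_i}$ with $b_i$ coprime to the order), (c) to see the five resulting classes are distinct, and (a) to see none lies in $H'$; this yields at least $5$ branch points for $E^{H'}/k(T)$, hence genus $\geq 2$ by Riemann--Hurwitz, for \emph{every} such $H'$, and the contradiction then comes from Lemma \ref{lemma genus2} together with condition (1). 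You instead run the Faltings step first (reproducing the interior of Lemma \ref{lemma genus2} via Lemma \ref{subextensions}, pigeonhole, and Proposition \ref{twisting}) to find one quotient $E^{H_j}$ of genus $\leq 1$, deduce $\leq 4$ relevant branch classes, and then your counting argument --- which is a clean repackaging of the paper's use of (a), (b), (c) --- produces an ``orphan'' class $C_{m_0}$ that is a power of no $D_i$. At that point you could simply cite \cite[Theorem 4.2]{Leg16c} for the contradiction; instead your final step re-derives its mechanism from the Specialization Inertia Theorem (choosing good primes $\mathcal{P}$ of suitable splitting behaviour, specializing $\widetilde E$ so as to force inertia at $\mathcal{P}$ in a generator class of $\langle C_{m_0}\rangle$, and using your observation that such generator classes are also powers of no $D_i$). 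The delicate point you flag --- the converse half of the Specialization Inertia Theorem, with its residue-degree and cyclotomic caveats --- is exactly the content of the cited theorem, so your proof is more self-contained but duplicates known work, whereas the paper's version is shorter and yields the stronger intermediate statement that \emph{all} quotients $E^{H'}$ have genus $\geq 2$, which slots uniformly into Lemma \ref{lemma genus2}.
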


\begin{proof}
Let $E/k(T)$ be a $k$-regular Galois extension with Galois group $G$. Assume that all but finitely many Galois extensions of $k$ with Galois group $G$ are specializations of $E/k(T)$. Let $H'$ be a normal subgroup of $G$ such that $G/H' \cong G/H$. Below, we show that the subextension $E^{H'}/k(T)$ has at least 5 branch points. Then, by the Riemann-Hurwitz formula, the genus of $E^{H'}/k(T)$ is at least 2. We may then combine Lemma \ref{lemma genus2} and condition (1) to get a contradiction.

Let $i \in \{1,\dots,5\}$. By part (d) of condition (2) and our assumption on $E/k(T)$, there exists a conjugacy class $C'_i$ of $G$ belonging to the inertia canonical invariant of $E/k(T)$ and an integer $a_i \geq 1$ such that ${C'_{i}}^{a_i}=C_i$; see \cite[Theorem 4.2]{Leg16c}. Then, by part (b) of condition (2), one has $C'_i=C_i^{b_i}$ for some integer $b_i \geq 1$ which is coprime to the order $c_i$ of the elements of $C_i$. Moreover, by part (c) of condition (2), one has $C'_i \not=C'_j$ for $i \not=j$. Hence, $E/k(T)$ has at least 5 distinct branch points $t_1, \dots, t_5$ whose associated inertia canonical conjugacy classes are $C_1^{b_1}, \dots, C_5^{b_5}$. By part (a) of condition (2) and as $b_i$ is coprime to $c_i$, one has that, for each $i \in \{1,\dots,5\}$, the class $C_i^{b_i}$ is not contained in $H'$. Hence, $t_1,\dots,t_5$ all are branch points of $E^{H'}/k(T)$, as needed.
\end{proof}

Viewing Theorems \ref{genus2} and \ref{genus1}, it is natural to apply Theorem \ref{genus0} when $m_{G/H,k}=0$, i.e., when $G/H$ is one of the groups given in Proposition \ref{list genus 0}. We focus below on the case where $G/H$ is cyclic and leave the other cases to the reader. For simplicity, we take $k=\Qq$.

\begin{theorem} \label{cyclic}
Assume that $G$ has a normal subgroup $H$ such that the following three conditions hold.

\vspace{0.5mm}

\noindent
{\rm{(1)}} The quotient $G/H$ is isomorphic to $\Zz/n\Zz$ for some integer $n \geq 3$.

\vspace{0.5mm}

\noindent
{\rm{(2)}} There exists a Galois extension of $\Qq$ with Galois group $\Zz/n\Zz$ which embeds into infinitely many Galois extensions of $\Qq$ with group $G$.

\vspace{0.5mm}

\noindent
{\rm{(3)}} For each normal subgroup $H'$ of $G$ such that $G/H' \cong \Zz/n\Zz$, there exist 3 distinct conjugacy classes $C_1$, $C_2$, and $C_3$ of $G$ such that the following holds for each $i \in \{1,2,3\}$:

\vspace{0.5mm}

{\rm{(a)}} $C_i$ is not contained in $H'$,

\vspace{0.5mm}

{\rm{(b)}} each element of $C_i$ generates a maximal cylic subgroup of $G$, 

\vspace{0.5mm}

{\rm{(c)}} $C_i$ belongs to the inertia canonical invariant of some $\Qq$-regular 

Galois extension of $\Qq(T)$ with Galois group $G$.

\vspace{0.5mm}

\noindent
Then, given a $\Qq$-regular Galois extension $E/\Qq(T)$ with Galois group $G$, there exist infinitely many Galois extensions of $\Qq$ with Galois group $G$ each of which is not a specialization of $E/\Qq(T)$. In particular, $G$ has no parametric extension over $\Qq$.
\end{theorem}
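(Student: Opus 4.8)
The plan is to adapt the proof of Theorem \ref{genus0} to the cyclic setting, the whole point being that for a cyclic cover of degree $n \geq 3$ one can force genus $\geq 2$ using only \emph{three} branch points rather than five. I would argue by contradiction: suppose $E/\Qq(T)$ is a $\Qq$-regular Galois extension with group $G$ such that all but finitely many Galois extensions of $\Qq$ with group $G$ are specializations of $E/\Qq(T)$. Fix a normal subgroup $H'$ with $G/H' \cong \Zz/n\Zz$ and look at the subextension $E^{H'}/\Qq(T)$, which is a $\Zz/n\Zz$-cover of $\Pp^1$. The target is to show that its genus is $\geq 2$; once this is established for every such $H'$, condition (2) together with Lemma \ref{lemma genus2}(1) produces infinitely many $G$-extensions of $\Qq$ that are \emph{not} specializations of $E/\Qq(T)$, contradicting the assumption.

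For each $i \in \{1,2,3\}$ I would manufacture a branch point of $E^{H'}/\Qq(T)$ with controlled inertia, following Theorem \ref{genus0} verbatim. By condition (3)(c) the class $C_i$ occurs in the inertia canonical invariant of some $\Qq$-regular $G$-realization, so by \cite[Theorem 4.2]{Leg16c} and the near-parametricity of $E/\Qq(T)$ there is a class $C_i'$ in the inertia invariant of $E/\Qq(T)$ itself with ${C_i'}^{a_i} = C_i$ for some $a_i \geq 1$. Condition (3)(b) (maximal cyclicity) then upgrades this to $C_i' = C_i^{b_i}$ with $b_i$ coprime to the order of $C_i$, and condition (3)(a) guarantees that the image of $C_i^{b_i}$ in $G/H' = \Zz/n\Zz$ is nonzero; hence the associated base point is honestly a branch point of $E^{H'}/\Qq(T)$. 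This yields at least three branch points whose inertia, read in $\Zz/n\Zz$, is nontrivial.

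The real work, and the one place where cyclicity is genuinely exploited, is the genus estimate. I would feed the inertia data into the Riemann--Hurwitz formula for $\Zz/n\Zz$-covers of $\Pp^1$,
\begin{equation*}
2g - 2 = -2n + \sum_{t} n\bigl(1 - 1/d_t\bigr),
\end{equation*}
where $d_t$ denotes the order in $\Zz/n\Zz$ of the inertia at the branch point $t$. Three summands, each with $d_t \geq 2$, are already supplied by the classes above; moreover the full collection of inertia elements of the $\Zz/n\Zz$-cover must both generate $\Zz/n\Zz$ and sum to zero. The hard part will be turning these constraints into the bound $g \geq 2$ for every $n \geq 3$: when the three distinguished inertia images fail to satisfy the sum-zero relation or fail to generate, extra branch points are forced (which only raises the genus), while in the residual configurations one must check by hand that $n \geq 3$ and the maximal-cyclic hypothesis push the ramification indices high enough. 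This Riemann--Hurwitz bookkeeping, and not any new arithmetic input, is the main obstacle; once genus $\geq 2$ is secured for each $H'$, the contradiction is inherited unchanged from the genus-$\geq 2$ argument of Lemma \ref{lemma genus2}, giving the stated non-parametricity and, in particular, the absence of a parametric extension over $\Qq$.
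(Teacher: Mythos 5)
Your plan breaks down at exactly the step you defer to ``bookkeeping'': pure Riemann--Hurwitz computation cannot deliver $g \geq 2$ here, and your claim that no new arithmetic input is needed is false. For a $\Zz/n\Zz$-cover of $\Pp^1$ with exactly three branch points, the ramification types $(3,3,3)$ for $n=3$, $(2,4,4)$ for $n=4$, and $(2,3,6)$ for $n=6$ satisfy every constraint you list --- three nontrivial inertia images, generation of $\Zz/n\Zz$, product-one --- and give genus exactly $1$. Nothing in the maximal-cyclicity hypothesis rules them out, since condition (3)(b) constrains the classes $C_i$ inside $G$, not their images in $G/H' \cong \Zz/n\Zz$, which can perfectly well have order $2$, $3$, $4$, or $6$. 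What kills these configurations is arithmetic over $\Qq$: Proposition \ref{RH genus 1}(2) (itself an application of the Branch Cycle Lemma) says the genus-$1$ types $(2,3,6)$, $(2,4,4)$, $(3,3,3)$ do not occur for $\Qq$-regular extensions, and the remaining genus-$1$ type $(2,2,2,2)$ is impossible for cyclic $G/H'$ with $n \geq 3$, since the inertia generators would then generate $\Zz/n\Zz$ by involutions; genus $0$ is excluded by Proposition \ref{list genus 0} once three branch points are in hand. This is how the paper argues, and it is also why the theorem is stated over $\Qq$ rather than over an arbitrary number field $k$, where (for instance, when $\zeta_3 \in k$) cyclic cubic covers of genus $1$ with three branch points genuinely exist --- so the ``residual configurations'' you hope to dispatch by hand cannot be dispatched without the rationality input.

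There is a second, earlier gap: you produce for each $i$ a class $C_i' = C_i^{b_i}$ in the inertia canonical invariant of $E/\Qq(T)$, but you never show that the three resulting branch points are \emph{distinct}. In Theorem \ref{genus0} distinctness is guaranteed by the hypothesis that $C_i$ is not a power of $C_j$ for $j \neq i$; that hypothesis is deliberately absent from Theorem \ref{cyclic}, so distinct $C_1, C_2, C_3$ may be powers of a common class, $C_1^{b_1} = C_2^{b_2}$ can occur, and your three classes could all sit over a single branch point. The paper closes this with one further application of the Branch Cycle Lemma: since $E/\Qq$ is regular and $b_i$ is coprime to the order of the elements of $C_i$, the class $C_i$ itself belongs to the inertia canonical invariant of $E/\Qq(T)$; the distinctness of $C_1, C_2, C_3$, together with (3)(a), then yields three distinct branch points of $E^{H'}/\Qq(T)$. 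Without this step your argument is open precisely for $n \in \{3,4,6\}$, where two-branch-point genus-$0$ cyclic covers over $\Qq$ do exist. Your framing and endgame (contradiction via Lemma \ref{lemma genus2}(1) and condition (2)) are correct and agree with the paper, but both missing ingredients are supplied there by the Branch Cycle Lemma, not by Riemann--Hurwitz.
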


\begin{proof}
The proof is almost identical to that of Theorem \ref{genus0}. To avoid confusion, we reproduce it below with the needed adjustments.

Let $E/\Qq(T)$ be a $\Qq$-regular Galois extension with Galois group $G$. Assume that all but finitely many Galois extensions of $\Qq$ with Galois group $G$ are specializations of $E/\Qq(T)$. Let $H'$ be a normal subgroup of $G$ such that $G/H' \cong \Zz/n\Zz$. Below, we show that $E^{H'}/\Qq(T)$ has at least 3 branch points. Then, by Propositions 7.1-2 and \ref{list genus leq 1 abelian} (and as $n \geq 3$), the genus of $E^{H'}/\Qq(T)$ is at least 2. We may then combine Lemma \ref{lemma genus2} and condition (2) to get a contradiction.

Let $i \in \{1,2,3\}$. By part (c) of condition (3) and our assumption on $E/\Qq(T)$, there exists a conjugacy class $C'_i$ of $G$ in the inertia canonical invariant of $E/\Qq(T)$ and an integer $a_i \geq 1$ such that ${C'_{i}}^{a_i}=C_i$; see \cite[Theorem 4.2]{Leg16c}. Then, by part (b) of condition (3), one has $C'_i=C_i^{b_i}$ for some integer $b_i \geq 1$ which is coprime to the order of the elements of $C_i$. By the Branch Cycle Lemma (see \cite{Fri77} and \cite[Lemma 2.8]{Vol96}), $C_i$ is in the inertia canonical invariant of $E/\Qq(T)$. As $C_1, \dots, C_3$ are distinct and as none of them is contained in $H'$ (part (a) of condition (3)), $E^{H'}/\Qq(T)$ has at least 3 branch points, as needed.
\end{proof}

The case where $G/H \cong \Zz/2\Zz$ requires more attention as this group has a regular realization over $\Qq$ with genus 1 and 4 branch points.

\begin{theorem} \label{cyclic2}
Assume that the following four conditions hold. 

\vspace{0.5mm}

\noindent
{\rm{(1)}} The group $G$ has a unique index 2 subgroup $H$.

\vspace{0.5mm}

\noindent
{\rm{(2)}} Each quadratic extension of $\Qq$ embeds into a Galois extension of $\Qq$ with Galois group $G$.

\vspace{0.5mm}

\noindent
{\rm{(3)}} There exists at least one quadratic extension of $\Qq$ which embeds into infinitely many Galois extensions of $\Qq$ with Galois group $G$.

\vspace{0.5mm}

\noindent
{\rm{(4)}} There exist $s$ distinct conjugacy classes $C_1, \dots,C_s$ of $G$ such that

\vspace{0.5mm}

{\rm{(a)}} either $s \geq 3$ or $s=2$ and $C_2$ is a power of $C_1$,

\vspace{0.5mm}

{\rm{(b)}} for each $i \in \{1,\dots,s\}$, the class $C_i$ is not contained in $H$,

\vspace{0.5mm}

{\rm{(c)}} for each $i \in \{1,\dots,s\}$, each element of $C_i$ generates a maximal 

cyclic subgroup of $G$, 

\vspace{0.5mm}

{\rm{(d)}} for each $i \in \{1,\dots,s\}$, $C_i$ belongs to the inertia canonical inva-

riant of some $\Qq$-regular Galois extension of $\Qq(T)$ with group $G$.

\vspace{0.5mm}

\noindent
Then, given a $\Qq$-regular Galois extension $E/\Qq(T)$ with group $G$, there exist infinitely many Galois extensions of $\Qq$ with  group $G$ which are not specializations of $E/\Qq(T)$. {\hbox{Hence, $G$ has no parametric extension over $\Qq$.}}
\end{theorem}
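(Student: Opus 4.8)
The plan is to mirror the proof of Theorem~\ref{cyclic} for the quotient $G/H\cong\Zz/2\Zz$, the one genuinely new feature being that the relevant quadratic subextension may have genus~$0$ or~$1$, where Faltings' theorem---and hence Proposition~\ref{twisting}---is unavailable. Fix a $\Qq$-regular Galois extension $E/\Qq(T)$ with group $G$ and assume, for contradiction, that all but finitely many Galois extensions of $\Qq$ with group $G$ are specializations of $E/\Qq(T)$. By condition~(1), $H$ is the unique normal subgroup of $G$ with quotient $\Zz/2\Zz$, so $E^H/\Qq(T)$ is \emph{the} quadratic subextension of $E/\Qq(T)$; in the notation of Lemma~\ref{lemma genus2} one then has $r=1$ and $H_1=H$, and every $G$-extension of $\Qq$ that is a specialization $E_{t_0}/\Qq$ has $(E^H)_{t_0}/\Qq$ as its quadratic subfield.

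First I would produce branch points of $E^H/\Qq(T)$ as in Theorem~\ref{cyclic}. For each $i$, condition~(4)(d), the parametricity assumption, and \cite[Theorem~4.2]{Leg16c} yield a class $C'_i$ in the inertia canonical invariant of $E/\Qq(T)$ some power of which is $C_i$; condition~(4)(c) forces $C'_i=C_i^{b_i}$ with $b_i$ coprime to the order of $C_i$, and the Branch Cycle Lemma (\cite{Fri77}, \cite[Lemma~2.8]{Vol96}) then places $C_i$ itself in that invariant. Since the $C_i$ are distinct and, by~(4)(b), none lies in $H$, each maps to the nontrivial element of $G/H$ and hence contributes a branch point of $E^H/\Qq(T)$. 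Thus condition~(4)(a) pins the guaranteed part of the branch locus of $E^H/\Qq(T)$ into one of two shapes---at least three distinct inertia classes, or a single power-orbit $\{C_1,C_1^{b}\}$ of $\Gal(\overline\Qq/\Qq)$-conjugate branch points---and by Propositions~7.1--2 and~\ref{list genus leq 1 abelian} this fixes the possible genus~$\le 1$ models of $E^H/\Qq(T)$.

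If the genus of $E^H/\Qq(T)$ is at least~$2$, I would conclude at once: Lemma~\ref{lemma genus2}(1), applied with the single subgroup $H$ and with condition~(3) in the role of its hypothesis~(1), produces infinitely many Galois extensions of $\Qq$ with group $G$ that are not specializations of $E/\Qq(T)$, contradicting the parametricity assumption.

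The crux is the low-genus case, where $E^H/\Qq(T)$ has genus~$0$ or~$1$. Rather than fixing a single target field, I would run the argument over all quadratic fields at once. By condition~(2), every quadratic field $\Qq(\sqrt{d})$ is the quadratic subfield of some Galois $G$-extension of $\Qq$; since only finitely many $G$-extensions fail to be specializations of $E/\Qq(T)$, it follows that $(E^H)_{t_0}/\Qq=\Qq(\sqrt{d})$ has a solution $t_0\in\Pp^1(\Qq)$ for all but finitely many square classes $d$. By the Twisting Lemma of \cite{DL12}, this says that the quadratic twist by $d$ of the genus~$\le 1$ curve underlying $E^H/\Qq(T)$ carries a $\Qq$-rational point for cofinitely many $d$. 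The non-existence results of~\S8 for twisted (hyper)elliptic curves---applicable precisely because the branch-locus shapes forced by~(4)(a) put $E^H/\Qq(T)$ among the curves treated there---assert on the contrary that infinitely many such twists have \emph{no} $\Qq$-rational point, the desired contradiction. I expect this low-genus step---matching the configurations allowed by~(4)(a) to the curves of~\S8 and securing the twisted non-existence over an infinite family of $d$---to be the main obstacle, the genus~$\ge 2$ branch being routine after Theorem~\ref{cyclic}.
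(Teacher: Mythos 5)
Your proposal is correct and follows essentially the same route as the paper's proof: you extract the branch points of $E^H/\Qq(T)$ from conditions (4)(a)--(d) via \cite[Theorem 4.2]{Leg16c}, condition (4)(c), and the Branch Cycle Lemma, you use conditions (1)--(2) as in Proposition \ref{prelim1} to make cofinitely many quadratic fields specializations of $E^H/\Qq(T)$, you dispose of the genus $\geq 2$ case with Lemma \ref{lemma genus2} and condition (3), and you kill the low-genus case with the twisted (hyper)elliptic curve results of \S 8. The paper merely packages that last step as a citation of Proposition \ref{r leq 4} (at most 4 branch points forces exactly two branch points, both $\Qq$-rational, which your two branch-locus shapes from (4)(a) exclude) instead of re-running the twisting argument, a difference of presentation only.
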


\begin{proof}
Let $E/\Qq(T)$ be a $\Qq$-regular Galois extension with Galois group $G$. Assume that all but finitely many Galois extensions of $\Qq$ with Galois group $G$ are specializations of $E/\Qq(T)$. As $H$ is unique (condition (1)), it suffices, as in the proof of Theorem \ref{genus0}, to show that $E^H/\Qq(T)$ has at least 5 branch points. One shows as in the proof of Theorem \ref{cyclic} that $E^H/\Qq(T)$ has at least $s$ branch points whose inertia canonical conjugacy classes are $C_1,\dots,C_s$. Moreover, $H$ is unique and condition (2) holds. Then, one shows as in the proof of Proposition \ref{prelim1} that all but finitely many quadratic extensions of $\Qq$ are specializations of $E^H/\Qq(T)$. Assume that $E^H/\Qq(T)$ has at most 4 branch points. Then, by Proposition \ref{r leq 4}, $E^H/\Qq(T)$ has exactly 2 branch points and both are $\Qq$-rational. As this cannot happen if $s \geq 3$, one has $s=2$ and $C_2$ is a power of $C_1$. Then, by the Branch Cycle Lemma, $E^H/\Qq(T)$ has at least 2 branch points that are not $\Qq$-rational, which cannot happen.
\end{proof}

\section{Solving embedding problems}

In this section, we give sufficient conditions for the embedding conditions of the criteria of \S3 to hold. From now on, we fix a number field $k$, a finite group $G$, and a non-trivial normal subgroup $H$ of $G$.

\subsection{Solving at least one embedding problem infinitely many times}

First, we give sufficient conditions for the following to hold:

\vspace{1mm}

\noindent
{\rm{($*$)}} {\it{there exists a Galois extension of $k$ with group $G/H$ which embeds into infinitely many Galois extensions of $k$ with group $G$.}}

\vspace{1mm}

\noindent
Obviously, a necessary condition for condition {\rm{($*$)}} to hold is that $G$ occurs as a Galois group over $k$.

\subsubsection{The case where $H$ is solvable}

In Proposition \ref{solvable kernel} below, we show that the converse holds if $H$ is solvable.

\begin{proposition} \label{solvable kernel}
{\rm{(1)}} Assume that $H$ is solvable. Then, condition {\rm{($*$)}} holds if and only if $G$ is a Galois group over $k$.

\vspace{0.5mm}

\noindent
{\rm{(2)}} Assume that $H$ is abelian and $H$ is the unique normal subgroup $H'$ of $G$ such that $G/H' \cong G/H$. Then, any given Galois extension of $k$ with Galois group $G/H$ embeds into zero or infinitely many Galois extensions of $k$ with Galois group $G$.
\end{proposition}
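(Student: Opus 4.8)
The plan is to handle both parts through a single mechanism: the torsor structure on the set of solutions of an embedding problem with abelian kernel. The \emph{only if} direction of (1) needs no work, since any extension realizing $G/H$ that embeds into a Galois extension $L/k$ forces $L/k$ to realize $G$. For the substantial implications I would first reduce to the case of an \emph{irreducible} abelian kernel. Let $N$ be a minimal normal subgroup of $G$ contained in $H$; such an $N$ exists because $H \neq \{1\}$ is normal, and since $H$ is solvable $N$ is elementary abelian, hence irreducible as a $G/N$-module for the conjugation action (which factors through $G/N$ as $N$ is abelian). The technical heart I would isolate is:

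\medskip
\noindent\textbf{Key claim.} \emph{Let $N$ be a nontrivial abelian normal subgroup of $G$ that is irreducible as a $G/N$-module, assume $G$ is realizable over $k$, and let $M/k$ be a Galois extension with group $G/N$ that embeds into at least one Galois extension of $k$ with group $G$. Then $M/k$ embeds into infinitely many Galois extensions of $k$ with group $G$.}
\medskip

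Granting the claim, both assertions follow quickly. For the \emph{if} direction of (1), pick $L_0/k$ realizing $G$ and set $M=L_0^{N}$ (group $G/N$); the claim produces infinitely many $G$-extensions $L_i\supseteq M$ with $L_i^{N}=M$, so the subextension $F:=L_i^{H}=M^{H/N}$ has group $G/H$ and is the \emph{same} field for all $i$, which is exactly condition $(*)$. For (2), given $F/k$ with group $G/H$ embedding into some $L_0/k$ with group $G$, the uniqueness hypothesis guarantees that the embedding is realized through $H$ itself, i.e. $L_0^{H}=F$; setting $M=L_0^{N}$ we again get $M^{H/N}=F$, and the claim yields infinitely many $G$-extensions $L_i$ with $L_i^{H}=F$, giving the dichotomy ``zero or infinitely many''.

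To prove the Key claim I would argue cohomologically. Fix a proper solution $\psi_0\colon G_k\twoheadrightarrow G$ of the embedding problem attached to $M/k$ and $1\to N\to G\to G/N\to 1$. Every lift of the fixed surjection $G_k\twoheadrightarrow G/N$ to $G$ has the form $\psi_c=c\cdot\psi_0$ for a continuous cocycle $c$, two lifts being $N$-conjugate exactly when the cocycles are cohomologous; hence the lifts up to $N$-conjugacy form a torsor under $H^1(k,N)$. Two facts then finish the proof. First, $H^1(k,N)$ is infinite, a standard fact for a number field $k$ and a nontrivial finite module $N$ (intuitively one may ramify solutions at arbitrarily many new primes). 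Second, all but finitely many lifts are \emph{surjective}: for any $\psi_c$, the intersection $\mathrm{Im}(\psi_c)\cap N$ is a $G/N$-submodule of $N$ (being normalized by $\mathrm{Im}(\psi_c)$, which surjects onto $G/N$), so by irreducibility it equals $\{1\}$ or $N$; the case $\{1\}$ forces $\mathrm{Im}(\psi_c)$ to be a complement of $N$, whence $\psi_c$ factors as a group-theoretic section of $G\to G/N$ composed with $G_k\twoheadrightarrow G/N$, and the classes of such split lifts lie in the image of inflation $H^1(G/N,N)\to H^1(k,N)$, an empty or finite set. As $H^1(k,N)$ is infinite, infinitely many lifts are surjective; since surjective lifts giving the same field differ by one of the finitely many automorphisms of $G$ inducing the identity on $G/N$, these yield infinitely many distinct $G$-extensions, each containing $M$ as its $N$-fixed field.

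The main obstacle is precisely the surjectivity bookkeeping in the last step: without the irreducibility of $N$ the non-surjective lifts need not be governed by a finite inflation image, since a proper submodule $N'\subsetneq N$ could support infinitely many ``partial'' solutions; this is why I would reduce to a minimal normal $N$ at the outset rather than working with $H$ directly. The infiniteness of $H^1(k,N)$, though standard, is the other indispensable input, and it is here that the hypothesis that $k$ is a number field is genuinely used. I also note that the uniqueness assumption in (2) enters only to pin down that the given embedding of $F$ proceeds through $H$, so that the reduction via $N\subseteq H$ correctly recovers $F=L_0^{H}$.
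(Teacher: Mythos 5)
Your proposal is correct, but it takes a genuinely different route from the paper's. The paper first replaces $H$ by the smallest non-trivial term of its derived series so as to assume $H$ abelian, then forms the fibre power $G_\varphi^n=\{(g_1,\dots,g_n)\in G^n \, : \, \varphi(g_1)=\cdots=\varphi(g_n)\}$ and solves the embedding problem $(g_1,\dots,g_n)\mapsto\theta(g_1)$ over a given $G$-extension $L/k$: this problem is split (diagonal section) with abelian kernel, so Ikeda's theorem on split embedding problems with abelian kernel over Hilbertian fields applies, and a single solution field contains $n$ distinct $G$-extensions all lying over one subextension of $L/k$ with group $G/H$; a pigeonhole over the finitely many subextensions of $L/k$ then gives (1), and the uniqueness hypothesis pins that subextension down for (2). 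You instead reduce to a minimal normal $N\leq H$ and argue cohomologically: lifts modulo $N$-conjugacy form a torsor under $H^1(k,N)$, irreducibility of $N$ confines the non-surjective lifts to a coset of the finite inflation image of $H^1(G/N,N)$ (a coset, not the image itself, since your base point $\psi_0$ is surjective --- a harmless slip in your wording), and the infinitude of $H^1(k,N)$ finishes. As for what each buys: the paper's argument is pure group theory plus one quotable theorem and, as its footnote records, works verbatim over any Hilbertian field, with no cohomological input; your argument, as you justify it, uses number-field arithmetic for the infinitude of $H^1(k,N)$, but in exchange your Key claim is sharper --- it upgrades a \emph{single} proper solution with irreducible kernel to infinitely many, yielding the zero-or-infinitely-many dichotomy already at the level of $G/N$ with no uniqueness hypothesis --- and your bookkeeping via the finitely many automorphisms of $G$ trivial on $G/N$ makes the distinctness of the resulting fields explicit, where the paper gets distinctness from the non-triviality of $H$ inside the fibre power. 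Both proofs use the uniqueness hypothesis in (2) in the identical way, namely only to guarantee that the given embedding satisfies $L_0^H=F$.
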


\begin{remark} \label{odd order}
By a classical result of Shafarevich (see, e.g., \cite[Theorem (9.6.1)]{NSW08}), solvable groups are Galois groups over $k$. Then, condition {\rm{($*$)}} holds if $G$ is solvable (in particular, if $G$ is abelian or of odd order; see \cite{FT63}) and $H$ is any non-trivial normal subgroup of $G$.
\end{remark}

\begin{proof}
First, we prove (1). Up to replacing the solvable group $H$ by the smallest non-trivial term of its derived series (which is an abelian characteristic subgroup of $H$), we may and will assume that $H$ is abelian. Let $\varphi$ be the canonical surjection of $G$ onto $G/H$ and $n$ a positive integer. Consider the finite group
$$G_{\varphi}^n = \{(g_1, \dots, g_n) \in G^n \, : \, \varphi(g_1) = \cdots = \varphi(g_n)\}.$$
Set $$N=\{(g_1, \dots, g_n) \in G_{\varphi}^n \, : \, g_1 \in H, \dots, g_n \in H\}.$$
Since $H$ is a normal subgroup of $G$, $N$ is a normal subgroup of $G_{\varphi}^n$. Moreover, for each $i \in \{1,\dots,n\}$, set
$$N_i=\{(g_1, \dots, g_n) \in N \, : \, g_i=1\}.$$ Then, $N_1, \dots, N_n$ all are normal subgroups of $G_{\varphi}^n$ contained in $N$.

\begin{lemma}  \label{iso}
{\rm{(1)}} One has $G_{\varphi}^n / N_i \cong G$ for each $i \in \{1,\dots,n\}$.

\vspace{0.5mm}

\noindent
{\rm{(2)}} One has $G_{\varphi}^n / N \cong G/H$.
\end{lemma}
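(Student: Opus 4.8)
The plan is to prove both isomorphisms via the first isomorphism theorem, using two natural homomorphisms out of the fibered product $G_\varphi^n$. The whole argument is formal once one records the key feature of $G_\varphi^n$: since all coordinates of a tuple share a common image under $\varphi$, a condition imposed on a single coordinate propagates to all of them.

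For part (1), I would fix $i$ and consider the $i$-th coordinate projection $\pi_i \colon G_\varphi^n \to G$, $(g_1,\dots,g_n) \mapsto g_i$. This is a group homomorphism, and it is surjective: given $g \in G$, the diagonal tuple $(g,\dots,g)$ lies in $G_\varphi^n$ (as $\varphi(g)=\cdots=\varphi(g)$) and maps to $g$. For the kernel, a tuple lies in $\ker \pi_i$ exactly when $g_i=1$; but the defining relation $\varphi(g_1)=\cdots=\varphi(g_n)$ then forces $\varphi(g_j)=\varphi(g_i)=1$ for every $j$, i.e.\ $g_j \in H$ for all $j$. Hence $\ker \pi_i = N_i$, and the first isomorphism theorem yields $G_\varphi^n/N_i \cong G$.

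For part (2), I would use the homomorphism $\psi \colon G_\varphi^n \to G/H$ sending $(g_1,\dots,g_n)$ to the common value $\varphi(g_1)=\cdots=\varphi(g_n)$, which is well defined precisely by the defining condition of $G_\varphi^n$. Surjectivity again follows from the diagonal: any $\overline{g} \in G/H$ lifts to some $g\in G$, and $(g,\dots,g)$ maps to $\overline{g}$. A tuple lies in $\ker\psi$ exactly when its common $\varphi$-value is trivial, i.e.\ $g_1 \in H$; by the same propagation argument this is equivalent to $g_j \in H$ for all $j$, so $\ker\psi = N$ and $G_\varphi^n/N \cong G/H$.

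I do not expect a genuine obstacle here, as the statement is a bookkeeping consequence of the fibered-product structure. The only point deserving care — and the only place that structure is really used — is the kernel computation: one must invoke that a single coordinate condition ($g_i \in H$ in part (1), resp.\ $\varphi(g_1)=1$ in part (2)) propagates to all coordinates because they share the same image in $G/H$. Once this is noted, both isomorphisms are immediate.
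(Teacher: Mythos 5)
Your proof is correct and follows essentially the same route as the paper: the coordinate projection $(g_1,\dots,g_n)\mapsto g_i$ for part (1) and the map $(g_1,\dots,g_n)\mapsto\varphi(g_1)$ for part (2), each combined with the first isomorphism theorem. You merely spell out details the paper leaves implicit, namely surjectivity via diagonal tuples and the observation that the fibered-product condition makes the kernel of $\pi_i$ equal to $N_i$ (a subset of $N$) rather than just $\{(g_1,\dots,g_n): g_i=1\}$.
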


\begin{proof}
To prove (1), it suffices to notice that, given $i \in \{1,\dots,n\}$, $(g_1, \dots,g_n) \in G^n_ {\varphi} \mapsto g_i \in G$ is an epimorphism with kernel $N_i$. As to (2), note that the surjection $\varphi$ induces an epimorphism $(g_1, \dots,g_n) \in G^n_ {\varphi} \mapsto \varphi(g_1) \in G/H$ with kernel $N$. Hence, the lemma holds.
\end{proof}

Let $L/k$ be a Galois extension with Galois group $G$ and $\theta : G \rightarrow {\rm{Gal}}(L/k)$ an isomorphism. Consider the finite {\it{embedding problem}}\footnote{We refer to \cite[Definition 16.4.1]{FJ08} for more on the terminology.}
$$\alpha : (g_1,\dots,g_n) \in G_{\varphi}^n \mapsto \theta (g_1) \in {\rm{Gal}}(L/k).$$
Then, $\alpha$ is split and, by the proof of Lemma \ref{iso}, it has kernel $N_1$, which is abelian (as $H$ is). By a result of Ikeda (see, e.g., \cite[Proposition 16.4.5]{FJ08}), every finite split embedding problem with abelian kernel over a Hilbertian field is solvable\footnote{More generally, Proposition \ref{solvable kernel} holds if $k$ is Hilbertian.}. Thus, there exist a Galois extension $M/k$ such that $L \subset M$ and an isomorphism $\beta : {\rm{Gal}}(M/k) \rightarrow G^n_\varphi$ such that $\alpha \circ \beta$ maps every element of ${\rm{Gal}}(M/k)$ to its restriction to $L$. 

Consider the Galois subextensions $M^{\beta^{-1}(N_1)}/k, \dots, M^{\beta^{-1}(N_n)}/k.$
They satisfy the following three properties:

\noindent
- ${\rm{Gal}}(M^{\beta^{-1}(N_i)}/k) \cong G$ for each $i \in \{1,\dots,n\}$ (Lemma \ref{iso}),

\noindent
- $M^{\beta^{-1}(N_1)}/k = L/k$ (this follows from $\alpha \circ \beta$ mapping every element of ${\rm{Gal}}(M/k)$ to its restriction to $L$),

\noindent
- $M^{\beta^{-1}(N_i)}/k \not= M^{\beta^{-1}(N_j)}/k$ for $i \not=j$ (as $H$ is not trivial).

\noindent
Moreover, all of them contain the Galois extension $M^{\beta^{-1}(N)}/k$ which has Galois group isomorphic to $G/H$ (Lemma \ref{iso}). Hence, there exists a subextension of $L/k$ with Galois group $G/H$ which embeds into $n$ distinct Galois extensions of $k$ with Galois group $G$. As the given extension $L/k$ has only finitely many subextensions, we are done.

Now, we prove (2). Assume that $H$ is abelian and unique. With no loss of generality, we may assume that $G/H$ is a Galois group over $k$. Fix a Galois extension $F/k$ with Galois group $G/H$ which embeds into at least one Galois extension $L/k$ with Galois group $G$. Apply the same argument as above with the extension $L/k$ to get that, for every integer $n \geq 1$, some subextension $F_n/k$ of $L/k$ with Galois group $G/H$ embeds into $n$ distinct Galois extensions of $k$ with Galois group $G$. As $H$ is unique, one has $F_n=F$ for each $n \geq 1$, thus ending the proof.
\end{proof}

\subsubsection{The case where $H$ is not solvable.}

To our knowledge, there is no general criterion asserting that condition ($*$) holds if and only if $G$ is a Galois group over $k$ if $H$ is not solvable. However, this is known for some specific non-abelian simple groups $H$. Here are some of them.

\begin{proposition} \label{GAR 1}
Assume that the following two conditions hold:

\vspace{0.5mm}

\noindent
{\rm{(1)}} $G/H$ is a Galois group over $k$,

\vspace{0.5mm}

\noindent
{\rm{(2)}} $H$ is $A_n$ with $n \not \in \{1,2,3,4,6\}$, or ${\rm{PSL}}_2(\mathbb{F}_p)$ with $p$ odd and $p \not = \pm1 \, \, {\rm{mod}} \, \, 24$, or any sporadic group with the possible exception of $M_{23}$.

\vspace{0.5mm}

\noindent
Then, condition {\rm{($*$)}} holds.
\end{proposition}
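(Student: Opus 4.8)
The plan is to run the argument of Proposition~\ref{solvable kernel}(1) almost verbatim, replacing its one non-formal ingredient --- Ikeda's solvability of split embedding problems with abelian kernel --- by the theory of \emph{GAR-realizations}. The external input I would rely on is the following: each group $H$ in the list of condition~(2) has a GAR-realization over $\Qq$ (for $A_n$ with $n\notin\{1,2,3,4,6\}$, for ${\rm{PSL}}_2(\mathbb{F}_p)$ with $p$ odd and $p\not\equiv\pm1 \bmod 24$, and for every sporadic group except possibly $M_{23}$; see \cite{MM99}), and such a realization base-changes from $\Qq$ to the number field $k$. The key consequence is that $H$ is a \emph{good kernel} over $k$: every finite embedding problem over $k$ with kernel isomorphic to $H$ is properly solvable as soon as its quotient is realizable over $k$ (again \cite{MM99}). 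Unlike Ikeda's theorem, this covers non-split embedding problems, which is exactly what the non-abelian simple kernel $H$ forces upon us.

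Next I would reproduce the group theory of Proposition~\ref{solvable kernel}. Let $\varphi:G\to G/H$ be the canonical surjection and, for each integer $n\geq 1$, form the fibre product $G_\varphi^n$ with its normal subgroups $N=H^n$ and $N_i=\{(g_1,\dots,g_n)\in N: g_i=1\}$, so that Lemma~\ref{iso} gives $G_\varphi^n/N_i\cong G$ and $G_\varphi^n/N\cong G/H$; here $N_1\cong H^{n-1}$ is no longer abelian. By condition~(1) fix a Galois extension $\bar L/k$ with group $G/H$. Solving the (possibly non-split) embedding problem with kernel $H$ attached to $\bar L/k$ and to $\varphi$ realizes $G$ over $k$ by a Galois extension $L/k$ containing $\bar L$; choose an isomorphism $\theta:G\to\Gal(L/k)$. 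The split embedding problem
\[
\alpha:(g_1,\dots,g_n)\in G_\varphi^n\longmapsto \theta(g_1)\in\Gal(L/k)
\]
then has kernel $N_1\cong H^{n-1}$, and I would solve it by climbing the tower $G_\varphi^n\to G_\varphi^{n-1}\to\cdots\to G_\varphi^1=G$, each step of which is surjective with kernel $H$; applying good-kernel solvability at each step produces a Galois extension $M/k$ with $L\subseteq M$, together with an isomorphism $\beta:\Gal(M/k)\to G_\varphi^n$ such that $\alpha\circ\beta$ is restriction to $L$.

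Finally, exactly as in Proposition~\ref{solvable kernel}, I would pass to the fixed fields $M^{\beta^{-1}(N_i)}/k$ for $i\in\{1,\dots,n\}$: by Lemma~\ref{iso}(1) each has group $G$, they are pairwise distinct because $H\neq 1$, and all of them contain the common subextension $M^{\beta^{-1}(N)}/k\subseteq L$, which has group $G/H$. Thus some subextension of $L/k$ with group $G/H$ embeds into $n$ distinct $G$-extensions of $k$; since $L/k$ has only finitely many subextensions, letting $n\to\infty$ and applying the pigeonhole principle yields a single Galois extension of $k$ with group $G/H$ that embeds into infinitely many Galois extensions of $k$ with group $G$, which is precisely condition~($*$). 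The main obstacle is entirely contained in the first paragraph: one must correctly invoke the GAR machinery of \cite{MM99} to guarantee that the groups in the list are good kernels over $k$ --- in particular for the initial non-split embedding problem that realizes $G$ --- after which the remainder is a routine transcription of the solvable case.
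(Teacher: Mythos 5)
Your proof is correct, but it takes a genuinely different route from the paper's. The paper's proof is much shorter: it fixes a single Galois extension $F/k$ with group $G/H$ (condition (1)), considers the one embedding problem $\theta \circ \varphi : G \rightarrow {\rm{Gal}}(F/k)$ with kernel $H$, invokes Matzat's theorem (\cite[Proposition 16.8.6]{FJ08}, with the GAR-realization of $H$ over $k$ from \cite[Chapter IV, Theorem 4.3]{MM99}) to get \emph{regular} solvability over $k$, and then cites the standard fact that a regularly solvable embedding problem over a Hilbertian field has \emph{infinitely many} solutions (\cite[Lemma 16.4.2]{FJ08}). That single lemma replaces your entire fibre-product apparatus: no $G_\varphi^n$, no tower, no pigeonhole, and the conclusion is even slightly stronger, since the initially chosen $F/k$ itself embeds into infinitely many $G$-extensions, rather than some unspecified subextension of $L/k$ produced by pigeonhole. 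Your route compensates for using only ``one proper solution per embedding problem'' by transplanting the fibre-product machinery of Proposition \ref{solvable kernel}; this works, and your handling of the two points where the abelian case does not transcribe verbatim is sound: the initial problem $G \to {\rm{Gal}}(\bar L/k)$ may be non-split, which Matzat's theorem (unlike Ikeda's) covers, and the kernel $N_1 \cong H^{n-1}$ of $\alpha$ is not simple, which you correctly repair by climbing the tower $G_\varphi^{m+1} \to G_\varphi^m$ whose successive kernels are $\cong H$ (note each of these problems is still an embedding problem \emph{over $k$}, so GAR over $k$ suffices at every step). Two minor points: your base-change gloss (``a GAR-realization over $\Qq$ base-changes to $k$'') is better replaced by citing \cite[Chapter IV, Theorem 4.3]{MM99} directly over $k$, as the paper does; and to extract proper (not just regular) solvability at each step of the tower you are implicitly using Hilbertianity of $k$, which is exactly the ingredient the paper uses once, packaged inside \cite[Lemma 16.4.2]{FJ08}, to get all infinitely many solutions in one stroke.
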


\begin{proof}
Assume that $H$ is as in condition (2). Then, by \cite[Chapter IV, Theorem 4.3]{MM99} (see also \cite[Remark 16.9.5]{FJ08}), $H$ has a so-called {\it{GAR-realization over $k$}}. Let $F/k$ be a Galois extension with Galois group $G/H$ (condition (1)), given with an isomorphism $\theta: G/H \rightarrow {\rm{Gal}}(F/k).$ Denote the canonical surjection of $G$ onto $G/H$ by $\varphi$. Then, the embedding problem $\theta \circ \varphi : G \rightarrow {\rm{Gal}}(L/k)$ has kernel $H$. A result of Matzat (see, e.g., \cite[Proposition 16.8.6]{FJ08}) states that every embedding problem whose kernel is non-abelian simple and possesses a GAR-realization over $k$ is regularly solvable over $k$. Since $k$ is Hilbertian, the regular solvability conclusion implies that this embedding problem has infinitely many solutions; cf., e.g., \cite[Lemma 16.4.2]{FJ08}. In particular, condition ($*$) holds.
\end{proof}

\subsubsection{A criterion for index 2 subgroups}

In Proposition \ref{Debes 92} below, we give a criterion which uses some regular realizations of $G$ over $k$.

\begin{proposition} \label{Debes 92}
Assume that the following two conditions hold.

\vspace{0.5mm}

\noindent
{\rm{(1)}} The quotient $G/H$ has order 2.

\vspace{0.5mm}

\noindent
{\rm{(2)}} There exists a $k$-regular Galois extension $E/k(T)$ with Galois group $G$ that satisfies the following two conditions:

\vspace{0.5mm}

{\rm{(a)}} $E/k(T)$ has genus at least 2,

\vspace{0.5mm}

{\rm{(b)}} the exact number of conjugacy classes of elements of even order 

belonging to the inertia canonical invariant of $E/k(T)$ is either 2 or 

3, and the associated two or three branch points are all $k$-rational.

\vspace{0.5mm}

\noindent
Then, condition {\rm{($*$)}} holds.
\end{proposition}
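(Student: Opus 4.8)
The plan is to exploit the quadratic subextension $E^H/k(T)$, which by condition (1) has Galois group $G/H \cong \Zz/2\Zz$, and to produce one fixed quadratic field that arises as $(E^H)_{t_0}$ for infinitely many points $t_0 \in \Pp^1(k)$ whose specialization $E_{t_0}/k$ is moreover a genuine $G$-extension. First I would determine the branch points of $E^H/k(T)$: a branch point $t_i$ of $E/k(T)$ ramifies in $E^H/k(T)$ exactly when its inertia canonical class $C_i$ is not contained in $H$, and since any odd-order element maps trivially into $G/H \cong \Zz/2\Zz$, such a class necessarily consists of even-order elements. A double cover of $\Pp^1$ has an even, positive number of branch points, so combining this parity with condition (2)(b) (which permits only $2$ or $3$ even-order classes in the inertia invariant) forces $E^H/k(T)$ to have exactly two branch points, both $k$-rational by the same hypothesis. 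Hence $E^H = k(T)(\sqrt{P(T)})$ for some $P \in k[T]$ of degree at most $2$, and $(E^H)_{t_0} = k(\sqrt{P(t_0)})$ for every non-branch $t_0 \in \Pp^1(k)$.

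Next I would fix the target quadratic field. Since $E^H/k(T)$ is a genuine degree-$2$ extension, $P$ is not a square in $k(T)$, so by Hilbert's irreducibility theorem there is a non-branch point $t_0^* \in k$ with $P(t_0^*) \notin (k^*)^2$; set $d = P(t_0^*)$ and $F = k(\sqrt{d})$, a genuine quadratic extension. The geometric input is then the quadratic twist $Y^2 = d\,P(T)$: as $\deg P \leq 2$ it has genus $0$, and it carries the $k$-rational point $(t_0^*, d)$, so it is $k$-isomorphic to $\Pp^1$. Parametrising it by a non-constant $T = \phi(u) \in k(u)$ yields infinitely many $t_0 = \phi(u_0) \in k$ with $P(t_0) \in d\,(k^*)^2$, that is, with $(E^H)_{t_0} = F$.

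Finally I would feed these points into a second use of Hilbert's irreducibility theorem. Pulling $E/k(T)$ back along $T = \phi(u)$ gives a $k$-regular $G$-extension of $k(u)$, so all but a thin set of $u_0 \in k$ produce a specialization $E_{\phi(u_0)}/k$ with full Galois group $G$; for such $u_0$, Lemma \ref{subextensions} identifies $(E^H)_{\phi(u_0)} = F$ as a subextension of $E_{\phi(u_0)}$, whence $F$ embeds into $E_{\phi(u_0)}/k$. Since the non-thin set is infinite and, by condition (2)(a), $E/k(T)$ has genus at least $2$, Proposition \ref{twisting} guarantees that each fixed $G$-extension equals $E_{t_0}$ for only finitely many $t_0$; thus the infinitely many admissible points yield infinitely many \emph{distinct} $G$-extensions of $k$, all containing $F$. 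This is precisely condition~($*$).

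The main obstacle is twofold. First, the branch-point analysis: one must combine the inertia/parity constraint with the bound ``$2$ or $3$'' of condition (2)(b) to pin down $E^H/k(T)$ as a double cover of $\Pp^1$ with exactly two branch points, so that the twist $Y^2 = d\,P(T)$ has genus $0$, the $k$-rationality of the branch points making $P$ an explicit polynomial over $k$ of degree at most $2$. Second, the coordination of the two thinness conditions: one must retain the same quadratic field $F$ along the entire genus-$0$ family $t_0 = \phi(u_0)$ while simultaneously keeping, for infinitely many members, the full Galois group $G$ after specialization. This is exactly where condition (2)(a) and Proposition \ref{twisting} intervene, allowing one to pass from infinitely many specialization points to infinitely many distinct realizations.
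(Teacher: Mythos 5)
Your proof is correct in substance but takes a genuinely different route from the paper's at the key step. Both arguments begin identically: the parity of the number of branch points of a quadratic extension of $k(T)$ regular over $k$, combined with condition (2)(b), pins down $E^H/k(T)$ as having exactly two $k$-rational branch points, so $E^H=k(T)(\sqrt{P(T)})$ with $\deg P\leq 2$; and both conclude with Proposition \ref{twisting} (via condition (2)(a)) to convert infinitely many specialization points into infinitely many distinct $G$-extensions containing one fixed quadratic field. The middle step differs: the paper normalizes $E^H=k(\sqrt{T})$ and invokes D\`ebes' result \cite[Corollary 1.7]{Deb92} to get that the specializations of $E/k(T)$ along a geometric progression $ab^m$ all have group $G$ for $m$ large; the quadratic subfields $(E^H)_{ab^m}$ then take only the two values $k(\sqrt{ab})$ and $k(\sqrt{a})$, and pigeonhole finishes. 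You instead fix a non-square value $d=P(t_0^*)$ by Hilbert's irreducibility theorem, observe that the twisted conic $Y^2=dP(T)$ has genus $0$ and carries a rational point, parametrize it by $T=\phi(u)$, and apply Hilbert's irreducibility theorem a second time to the pullback of $E$ along $\phi$. Your route is more self-contained (standard HIT plus a conic parametrization in place of the D\`ebes citation, which is the paper's only external input here) and it lets you prescribe which quadratic field $F$ gets embedded, which the paper's proof does not; the paper's version is shorter given the citation. Your twisting step is also essentially the explicit quadratic case of the Twisting Lemma, in the spirit of \S 8 of the paper.

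One assertion in your final step is wrong, though harmlessly so: the pullback $Ek(u)/k(u)$ is \emph{not} $k$-regular. Indeed $k(u)=k(T)(\sqrt{dP(T)})$, so $E^Hk(u)=k(u)(\sqrt{d})$ is a constant extension and $\sqrt{d}\in Ek(u)\setminus k$. What you actually need, and did not verify, is that ${\rm{Gal}}(Ek(u)/k(u))=G$: since $k(u)/k(T)$ is quadratic, this amounts to $\sqrt{dP(T)}\notin E$, which holds because otherwise $\sqrt{d}=\sqrt{dP(T)}/\sqrt{P(T)}\in E$, contradicting the regularity of $E/k$ (as $d$ is a non-square in $k$). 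With that one-line fix, Hilbert's irreducibility theorem over the Hilbertian field $k$ applies to the non-regular Galois extension $Ek(u)/k(u)$ exactly as you use it --- regularity is not needed for HIT over a number field --- and the rest of your argument stands.
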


\begin{proof}
By condition (1), the subextension $E^{H}/k(T)$ has degree 2. Moreover, by part (b) of condition (2), it has exactly two branch points $t_1$ and $t_2$, and $t_1$ and $t_2$ are $k$-rational. Up to applying a change of variable, we may and will assume that $E^{H}=k(\sqrt{T})$. Let $b$ be any element of $k \setminus \{0\}$ which is not a root of unity. By \cite[Corollary 1.7]{Deb92}, there exists $a \in k \setminus \{0\}$ such that, for each sufficiently large positive integer $m$, the specialization of $E/k(T)$ at $ab^m$ has Galois group $G$. Fix such an element $a$. For every $m$ as before, the specialization of $E^{H}/k(T)$ at $ab^m$ is quadratic and one has $(E^{H})_{ab^m}= k(\sqrt{ab})$ or $(E^{H})_{ab^m} = k(\sqrt{a})$. To conclude, it suffices to show that all the specializations of $E/k(T)$ at $ab^m$, $m \geq 1$, provide infinitely many distinct extensions of $k$. But this claim holds by part (a) of condition (2) and Proposition \ref{twisting}.
\end{proof}

\subsection{Solving every embedding problem at least once}

Now, we give sufficient conditions for the following condition to hold:

\vspace{1mm}

\noindent
{\rm{($**$)}} {\it{Every Galois extension of $k$ with Galois group $G/H$ embeds into at least one Galois extension of $k$ with Galois group $G$.}}

\vspace{1mm}

\noindent
Condition {\rm{($**$)}} may fail (e.g., if $H=\Zz/2\Zz$, $G=\Zz/4\Zz$, and $k=\Qq$; see \cite[Theorem 1.2.4]{Ser92}). However, it holds in the following cases.

\begin{proposition} \label{GAR 2}
Condition {\rm{($**$)}} holds in the following two situations:

\vspace{0.4mm}

\noindent
{\rm{(1)}} $H$ is $A_n$ with $n \not \in \{1,2,3,4,6\}$, or ${\rm{PSL}}_2(\mathbb{F}_p)$ with $p$ odd and $p \not = \pm1 \, \, {\rm{mod}} \, \, 24$, or any sporadic group with the possible exception of $M_{23}$.

\vspace{0.4mm}

\noindent
{\rm{(2)}} $H=A_n$ and $G=S_n$ (for an arbitrary positive integer $n$).
\end{proposition}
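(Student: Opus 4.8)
The plan is to treat the two cases separately: part (1) is a direct variant of the proof of Proposition \ref{GAR 1}, while part (2) needs a genuinely different, geometric argument because $n$ is allowed to be arbitrary (in particular $n=6$, which is excluded from the GAR list).

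For part (1), I would argue exactly as in Proposition \ref{GAR 1}, except that instead of invoking condition (1) to produce one $G/H$-extension I would start from an \emph{arbitrary} Galois extension $F/k$ with Galois group $G/H$, given with an isomorphism $\theta\colon G/H\to\Gal(F/k)$. Writing $\varphi\colon G\to G/H$ for the canonical surjection, the composite $\theta\circ\varphi\colon G\to\Gal(F/k)$ is a finite embedding problem with kernel $H$. Since $H$ is one of the listed non-abelian simple groups, it possesses a GAR-realization over $k$ (by \cite[Chapter IV, Theorem 4.3]{MM99}), so by Matzat's theorem \cite[Proposition 16.8.6]{FJ08} this embedding problem is regularly solvable over $k$; in particular it has a solution, i.e.\ a Galois extension $L/k$ with Galois group $G$ and $F\subset L$. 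As $F/k$ was arbitrary, condition ($**$) holds.

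For part (2) the quotient is $G/H=S_n/A_n\cong\Zz/2\Zz$, and the cases $n\le 2$ are trivial (every quadratic extension embeds into itself), so assume $n\ge 3$. Since $A_n$ is the unique index-$2$ subgroup of $S_n$, for any $S_n$-extension $M/k$ the unique quadratic subextension is $M^{A_n}=k(\sqrt{\mathrm{disc}})$. Thus ($**$) reduces to: for every non-square $d\in k^\ast$, produce a separable degree-$n$ polynomial over $k$ with Galois group $S_n$ and discriminant $\equiv d\pmod{(k^\ast)^2}$, i.e.\ with $A_n$-fixed field $k(\sqrt d)$. To this end I would first fix, once and for all, a $k$-regular Galois extension $E/k(T)$ with Galois group $S_n$ having \emph{exactly two} branch points of odd inertia, both $k$-rational (say at $0$ and $\infty$, or $0$ and $1$). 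Such a realization exists for every $n$: one chooses a generating tuple of inertia generators in $S_n$ with product $1$ in which precisely two entries are transpositions and the rest lie in $A_n$ (for instance a transposition, an $n$-cycle, and their inverse product), and descends the associated cover to $k$ using the Riemann existence theorem together with the fact that all conjugacy classes of $S_n$ are rational and all chosen branch points are $k$-rational. For such $E/k(T)$, the $A_n$-fixed field $E^{A_n}$ is the function field of a double cover of $\mathbb{P}^1_k$ branched at exactly two $k$-rational points; by Riemann--Hurwitz it has genus $0$ and carries a $k$-rational (ramification) point, hence $E^{A_n}\cong k(T)(\sqrt{\ell(T)})$ with $\ell$ of degree at most $2$ and with $k$-rational roots.

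It then remains to observe that the square-class map $t_0\mapsto \ell(t_0)\bmod(k^\ast)^2$ is surjective onto $k^\ast/(k^\ast)^2$: this is clear if $\deg\ell=1$, and if $\deg\ell=2$ with rational roots it follows from the fact that the associated smooth conic has a $k$-rational point and therefore represents every class. Fixing $d\in k^\ast$, I would choose a $k$-rational parametrization $u\mapsto t_0(u)$ of the (genus-$0$, $k$-pointed) locus $\{\,\ell(t_0)\equiv d\,\}$ and pull back $E/k(T)$ along $T=t_0(u)$. Since this base change is by a non-constant rational map not factoring through $E^{A_n}$, the resulting extension of $k(u)$ still has group $S_n$, while along it the $A_n$-fixed field is the \emph{constant} quadratic extension $k(u)(\sqrt d)$. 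Applying Hilbert's irreducibility theorem over $k(u)$ yields infinitely many $u_0\in k$ for which the specialization has Galois group $S_n$ and $A_n$-fixed field $k(\sqrt d)$; its splitting field is then an $S_n$-extension of $k$ containing $k(\sqrt d)$, establishing ($**$).

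I expect the main obstacle to lie in the construction of the regular $S_n$-realization with exactly two $k$-rational branch points of odd inertia, uniformly in $n$: one must check both that a suitable generating tuple exists in $S_n$ for all $n\ge 3$ and that the corresponding cover descends to $k$ (equivalently, that the field of moduli is $k$ and is a field of definition, which holds here since $S_n$ has trivial center, the branch points are $k$-rational, and the inertia classes are rational). Everything afterwards — the genus-$0$ computation, the surjectivity of the square-class map, and the final application of Hilbert irreducibility after base change — is routine.
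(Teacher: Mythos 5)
Your part (1) coincides with the paper's proof: the paper disposes of Proposition \ref{GAR 2}(1) by referring back to the proof of Proposition \ref{GAR 1}, and, exactly as you observe, that proof never uses anything about the particular $G/H$-extension $F/k$ -- the GAR-property of $H$ from \cite[Chapter IV, Theorem 4.3]{MM99} together with Matzat's theorem \cite[Proposition 16.8.6]{FJ08} makes \emph{every} embedding problem $\theta \circ \varphi : G \rightarrow {\rm{Gal}}(F/k)$ with kernel $H$ regularly solvable, hence solvable, which is ($**$). For part (2), by contrast, the paper simply cites Neumann's theorem \cite[Theorem 2]{Neu86}, whereas you replace the citation by a self-contained geometric argument: a $k$-regular $S_n$-realization whose inertia tuple has exactly two classes of odd permutations at $k$-rational branch points, so that $E^{A_n}=k(T)(\sqrt{\ell(T)})$ with $\ell$ of degree $\leq 2$ split over $k$; surjectivity of $t_0 \mapsto \ell(t_0) \bmod (k^*)^2$ (correct as stated: for each $d$ the conic $Y^2 = d\,\ell(T)$ contains the \emph{trivial} rational point above a branch point of $E^{A_n}/k(T)$, hence is a rational curve with infinitely many points, and equivalently the isotropic binary form $c(T-t_1Z)(T-t_2Z)$ is universal); base change along that conic, which amounts to composing with $k(T)(\sqrt{d\,\ell(T)})$ -- linearly disjoint from $E$ since $A_n$ is the unique index-$2$ subgroup of $S_n$ and $d$ is a nonsquare in $k$ -- so that the new quadratic resolvent is the constant extension $k(u)(\sqrt{d})$; and finally Hilbert irreducibility. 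This is a genuinely different and valid route; it buys transparency and independence from \cite{Neu86} at the cost of length. One caveat: your descent justification for the ad hoc tuple (transposition, $n$-cycle, inverse product) is too optimistic -- rationality of the classes and branch points plus triviality of $Z(S_n)$ guarantee that the field of moduli is a field of definition, but \emph{not} that the field of moduli is $k$; for that you need rigidity of the tuple. The repair is immediate: use instead the classical rigid rational triple $([1^{n-2}2^1], [1^1(n-1)^1], [n^1])$ of \cite[\S 8.3.1]{Ser92}, which the paper itself invokes in \S 6.5.1; for every $n \geq 3$ it has exactly two classes of odd permutations (the transposition class together with the $n$-cycle class for $n$ even, resp.\ the $(n-1)$-cycle class for $n$ odd) and $\Qq$-rational branch points by the Branch Cycle Lemma, and base change from $\Qq$ to $k$ preserves regularity. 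With that substitution your proof of part (2) is complete.
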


\begin{proof}
In case (1), see the proof of Proposition \ref{GAR 1} to get condition {\rm{($**$)}}. In case (2), it is a result of Neumann; see \cite[Theorem 2]{Neu86}. 
\end{proof}

\section{Examples}

Let $k$ be a number field and $G$ a non-trivial finite group. This section is organized as follows. In \S5.1, we give examples of finite groups with no finite 1-parametric set over $k$; see Theorems 5.1-2. Then, in \S5.2, we give further examples of groups that have no parametric extension over $k$; see Theorem \ref{thm 2}. Finally, in \S5.3, we give conjectural examples of (infinite) sets of regular realizations over $k$ with the same Galois group which are not $n$-parametric over $k$ for large $n$; see Theorems 5.4-5. 

\subsection{Finite groups with no finite 1-parametric set over $k$}

First, we give examples over the given number field $k$.

\begin{theorem} \label{thm 1}
Assume that one of the following conditions holds:

\vspace{0.4mm}

\noindent
{\rm{(1)}} $G=G_1 \times G_2$, where $G_1$ is any finite group with $m_{G_1,k} \geq 2$ and $G_2$ is any non-trivial finite group,

\vspace{0.5mm}

\noindent
{\rm{(2)}} the order of $G$ has no prime factor $p$ such that $[k(\zeta_p):k] \leq 2$ (with $\zeta_p$ a primitive $p$-th root of unity)\footnote{See \S6.2.2 for more on these prime numbers.} and $G$ is not cyclic of prime order,

\vspace{0.4mm}

\noindent
{\rm{(3)}} $G$ is abelian, but none of the following groups:

\vspace{0.5mm}

{\rm{(a)}} $\Zz/n\Zz$ ($n \geq2$),

\vspace{0.1mm}

{\rm{(b)}} $(\Zz/p\Zz)^2$ ($p$ prime),

\vspace{0.5mm}

{\rm{(c)}} $\Zz/2\Zz \times \Zz/2p\Zz$ ($p$ prime),

\vspace{0.5mm}

{\rm{(d)}} $\Zz/3\Zz \times \Zz/3p\Zz$ ($p$ prime),

\vspace{0.5mm}

{\rm{(e)}} {\hbox{$\Zz/2\Zz \times \Zz/8\Zz$, $(\Zz/4\Zz)^2$, $(\Zz/2\Zz)^3$, $(\Zz/2\Zz)^2 \times \Zz/4\Zz$, $(\Zz/3\Zz)^3$, $(\Zz/2\Zz)^4$.}}

\vspace{0.5mm}

\noindent
{\rm{(4)}} the center $Z(G)$ of $G$ is not trivial and $G/Z(G)$ is neither solvable nor $A_5$ (for example, $G={\rm{GL}}_n(\mathbb{F}_{q})$ where $n$ is any integer and $q$ is any prime power such that $n \geq 2$, $q \geq 3$, and $(n,q) \not \in \{(2,3), (2,4)\}$).

\vspace{0.5mm}

\noindent
Then, $G$ has no non-empty finite 1-parametric set over $k$ \footnote{\label{foot1}See Remark \ref{rem1} for the case of the empty set.}.
\end{theorem}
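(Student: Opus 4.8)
The plan is to derive every case from Theorem \ref{genus2}(1): in each one I exhibit a non-trivial proper normal subgroup $H$ of $G$ for which the embedding condition \ref{genus2}(1) holds and $m_{G/H,k} \geq 2$, after which the conclusion is immediate. Throughout I may assume that $G$ is a regular Galois group over $k$, since otherwise $G$ has no non-empty set of $k$-regular realizations at all (a regular realization would, by Hilbert's irreducibility theorem, produce Galois extensions of $k$ with group $G$), and the assertion holds vacuously; cf.\ Remark \ref{rem1}. I will also use the monotonicity $m_{Q',k} \le m_{Q,k}$ whenever $Q$ surjects onto $Q'$ — obtained by passing to the sub-realization fixed by the kernel and using that the genus of a cover dominates that of its base — so that it suffices to produce any quotient of $G$ of minimal genus at least $2$.

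I first dispose of the embedding condition \ref{genus2}(1). In case (1) I take $H = \{1\} \times G_2$, so that $G/H \cong G_1$; since $G$ is a regular Galois group, so is its quotient $G_2$, and fixing a $G_1$-extension $F_1/k$ and applying Hilbert's irreducibility theorem to a regular realization of $G_2$ yields infinitely many $G_2$-extensions of $k$ linearly disjoint from $F_1$, whose composita with $F_1$ are infinitely many distinct $G$-extensions containing $F_1$. In cases (2) and (3) the group $G$ is solvable: case (3) is abelian, and in case (2) the hypothesis forbids the prime $2$ (as $[k(\zeta_2):k]=1 \le 2$), forcing $|G|$ odd and hence $G$ solvable by the Feit--Thompson theorem; thus the embedding condition holds for \emph{every} non-trivial normal $H$ by Remark \ref{odd order}. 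In case (4) I take $H = Z(G)$, which is non-trivial by hypothesis and proper (else $G$ is abelian and $G/Z(G)$ is trivial, hence solvable); as $Z(G)$ is abelian, the embedding condition follows from Proposition \ref{solvable kernel}(1).

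It remains to verify $m_{G/H,k} \ge 2$, which rests on the small-genus classification of \S7. In case (1) this is exactly the hypothesis $m_{G_1,k} \ge 2$. In case (2) I pick a prime $p \mid |G^{\mathrm{ab}}|$ (which exists since $G$ is solvable and non-trivial) and a normal $H$ with $G/H \cong \Zz/p\Zz$, non-trivial because $G$ is not cyclic of prime order; since $p \mid |G|$ the hypothesis gives $[k(\zeta_p):k] > 2$, which excludes $p \in \{2,3\}$, and a cyclic group of prime order $p \ge 5$ with $[k(\zeta_p):k] > 2$ has no genus-$0$ realization (a two-branch-point realization would, by the branch-cycle lemma, force $[k(\zeta_p):k] \le 2$) and no genus-$1$ realization (Riemann--Hurwitz forces $p \in \{2,3\}$), so $m_{\Zz/p\Zz,k} \ge 2$. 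In case (4), any finite group acting faithfully on a genus-$1$ curve is an extension of a cyclic group by the abelian group of translations, hence solvable, so a non-solvable group has no genus-$1$ realization; and $A_5$ is the only non-solvable group of genus $0$ (Proposition \ref{list genus 0}). As $G/Z(G)$ is non-solvable and distinct from $A_5$, we obtain $m_{G/Z(G),k} \ge 2$.

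The principal difficulty is case (3). By the monotonicity above and the abelian small-genus classification (Proposition \ref{list genus leq 1 abelian}), an abelian $G$ fails to admit any quotient of minimal genus $\ge 2$ precisely when \emph{every} quotient of $G$ lies among the abelian groups possessing a genus-$\le 1$ realization over some number field — namely the cyclic groups and $(\Zz/2\Zz)^2$ in genus $0$, together with the abelian subgroups of automorphism groups of genus-$1$ curves. The heart of the proof is thus the combinatorial verification that the families (a)--(e) are exactly these ``bad'' abelian groups, i.e.\ that every abelian group outside (a)--(e) has a quotient of genus $\ge 2$ over every $k$; this finite but delicate bookkeeping over the structure of finite abelian groups is where I expect the bulk of the work to lie, the other three cases being comparatively short once the embedding condition and the genus classification are in hand. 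In each case Theorem \ref{genus2}(1) then yields the conclusion.
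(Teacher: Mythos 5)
Your architecture coincides with the paper's: every case is funneled through Theorem \ref{genus2}(1), with the embedding condition supplied by Hilbert's irreducibility theorem over $F_1$ (case (1)), by Shafarevich's theorem via Remark \ref{odd order} (cases (2) and (3), after noting $|G|$ odd, hence $G$ solvable, in case (2)), and by Proposition \ref{solvable kernel}(1) with $H=Z(G)$ (case (4)); the genus hypothesis $m_{G/H,k}\geq 2$ is then extracted from the classifications of \S 7. Cases (1) and (4) are complete and essentially identical to the paper (your direct argument that automorphism groups of genus-one curves are solvable replaces the citation behind Proposition \ref{list genus leq 1}(1), which is fine). In case (2) you deviate mildly but correctly: the paper proves $m_{G/H,k}\geq 2$ for an \emph{arbitrary} non-trivial proper normal subgroup $H$, using that $|G/H|$ is coprime to every prime in $\mathcal{S}\supset\{2,3\}$, whereas you specialize to a quotient $\Zz/p\Zz$ of $G^{\mathrm{ab}}$; your version works ($H$ is non-trivial exactly because $G$ is not cyclic of prime order, $[k(\zeta_p):k]>2$ forces $p\geq 5$, the Branch Cycle Lemma rules out the two-branch-point genus-$0$ realization of Proposition \ref{list genus 0}(a), and Proposition \ref{RH genus 1} rules out genus $1$), and is arguably tidier. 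Your monotonicity remark $m_{Q',k}\leq m_{Q,k}$ for quotients is also sound and is implicitly used by the paper.

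The genuine gap is case (3). You correctly reduce it, via Proposition \ref{list genus leq 1 abelian}(1), to the assertion that every finite abelian group outside families (a)--(e) has a non-trivial proper quotient avoiding the list $\Zz/n\Zz$ $(n\geq 2)$, $(\Zz/2\Zz)^2$, $\Zz/2\Zz\times\Zz/4\Zz$, $(\Zz/3\Zz)^2$, $(\Zz/2\Zz)^3$ --- but you then explicitly decline to carry out this verification (``where I expect the bulk of the work to lie''). That verification \emph{is} the content of case (3), and it is not routine: the paper devotes \S 6.3.1 to it, writing $G=\Zz/d_1\Zz\times\cdots\times\Zz/d_m\Zz$ with $d_i\mid d_{i+1}$ and treating $m\geq 4$, $m=3$ (three subcases), and $m=2$ (three subcases, the last requiring $d_2/d_1$ to be neither $1$ nor prime --- exactly the point where the exceptional families (b), (c), (d) and the group $\Zz/2\Zz\times\Zz/8\Zz$ in (e) arise). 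Without this casework the precise exceptional list in the theorem is unjustified, so your proposal proves cases (1), (2), (4) but only outlines case (3). A further small imprecision there: your ``precisely when'' is too strong. Proposition \ref{list genus leq 1 abelian}(1) gives only one implication (a genus-$\leq 1$ abelian realization over $k$ forces membership in the list), and that is the only implication you may use --- and the only one you need; over a specific $k$, e.g.\ $k=\Qq$, groups in the list such as $\Zz/2\Zz\times\Zz/4\Zz$ or $\Zz/8\Zz$ nevertheless have minimal genus $\geq 2$, as Proposition \ref{list genus leq 1 abelian}(2) and \S 6.3.2 show.
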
 

Now, we give other examples in the specific case $k=\Qq$ (in addition to those already given in Theorem \ref{thm 1}).

\begin{theorem} \label{thm 1.1}
Assume that one of the following conditions holds:

\vspace{0.5mm}

\noindent
{\rm{(1)}} $G$ is any non-abelian group such that $2$ does not divide $|G|$, $3$ divides $|G|$, and $G \not= (\Zz/p\Zz)^k \rtimes \Zz/3\Zz$ ($k \in \{1,2\}$, $p$ prime, $p \not=3$),

\vspace{0.5mm}

\noindent
{\rm{(2)}} $G$ is abelian, but none of the following groups:

\vspace{0.5mm}

{\rm{(a)}} $\Zz/p\Zz$ ($p$ prime),

\vspace{0.5mm}

{\rm{(b)}} $\Zz/4\Zz$, $\Zz/6\Zz$, $\Zz/12\Zz$, $(\Zz/2\Zz)^2$, $\Zz/2\Zz \times \Zz/4\Zz$, $\Zz/2\Zz \times \Zz/6\Zz$,

$(\Zz/3\Zz)^2$, $(\Zz/2\Zz)^3$, $(\Zz/2\Zz)^4$,

\vspace{0.5mm}

\noindent
{\rm{(3)}} $G$ is the dihedral group $D_n$ (with $2n$ elements), where $n$ is any positive integer which is neither a prime nor in $\{1,4,6,8,9,12\}$,

\vspace{0.5mm}

\noindent
{\rm{(4)}} the center $Z(G)$ of $G$ is not trivial and $G/Z(G)$ is neither solvable of even order nor of order $\leq 3$ (for example, $G={\rm{GL}}_2(\mathbb{F}_4)$).

\vspace{0.5mm}

\noindent
Then, $G$ has no non-empty finite 1-parametric set over $\Qq$ \footref{foot1}.
\end{theorem}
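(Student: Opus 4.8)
The plan is to prove Theorem \ref{thm 1.1} by applying, case by case, the finite-set criteria of \S3 --- chiefly Lemmas \ref{lemma genus2} and \ref{lemma genus1} --- where the embedding hypotheses are verified using the results of \S4 and the genus hypotheses using the $\Qq$-specific computations of \S7. Since the assertion concerns non-empty sets, I would first observe that if $G$ is not a regular Galois group over $\Qq$ then no non-empty set of $\Qq$-regular realizations of $G$ exists and the statement is vacuous (compare Remark \ref{rem1}); so I may assume throughout that $G$ is a regular Galois group over $\Qq$, which in particular makes condition $(*)$ available through Proposition \ref{solvable kernel} whenever the kernel $H$ chosen below is solvable.

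The main idea is, in each case, to exhibit a non-trivial normal subgroup $H\triangleleft G$ for which every $\Qq$-regular $G$-realization $E$ has the property that each of its subextensions with Galois group $G/H$ has genus $\geq 2$ (so that Lemma \ref{lemma genus2} applies), or genus $\geq 1$ together with the local ramification input of Addendum \ref{genus1} (so that Lemma \ref{lemma genus1} applies). The crucial observation is that this is a \emph{finer} requirement than $m_{G/H,\Qq}\geq 2$: what one really needs is that no genus-$\leq 1$ realization of $G/H$ over $\Qq$ lifts to a $G$-realization. Here \S7 is decisive, because over $\Qq$ a genus-$1$ realization has only involutory inertia (Proposition 7.2) and a genus-$0$ realization is severely constrained by Riemann--Hurwitz (Proposition 7.1); lifting to $G$ typically forces inertia of strictly larger order, and then the Branch Cycle Lemma forces extra Galois-conjugate branch points, pushing the genus above $1$. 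This is exactly the phenomenon distinguishing $\Zz/9\Zz$ from $\Zz/6\Zz$: a genus-$0$ realization of $\Zz/3\Zz$ lifts to $\Zz/6\Zz$ by composing with an independent quadratic extension, whereas lifting it to $\Zz/9\Zz$ would require order-$9$ inertia, which by the Branch Cycle Lemma cannot be carried by the two conjugate branch points of the genus-$0$ cover and thus forces genus $\geq 2$.

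Concretely I would argue as follows. In case (1), $G$ is solvable of odd order, hence so is every quotient; by Proposition 7.2 no non-trivial odd quotient admits a genus-$1$ realization over $\Qq$, so it suffices to force the relevant subextensions into genus $\geq 2$, which (odd order excluding genus $1$) amounts to excluding genus $0$, and the group theory reduces to checking that this is possible unless $G\cong(\Zz/p\Zz)^{k}\rtimes\Zz/3\Zz$. For case (2) the same analysis is carried out internally using the classification of abelian genus-$\leq 1$ realizations (Proposition \ref{list genus leq 1 abelian}): one selects $H$ so that the subextensions with group $G/H$ all have genus $\geq 2$, the listed exceptions being precisely the abelian groups all of whose proper quotients are among the genus-$0$ cyclic groups $\Zz/n\Zz$ with $n\in\{1,2,3,4,6\}$, leaving no usable kernel. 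In case (3) I take $H=\langle r^{d}\rangle$, so that $G/H\cong D_{d}$, and choose a divisor $d\mid n$ with $m_{D_{d},\Qq}\geq 2$ supplied by \S7, the embedding being furnished by Proposition \ref{solvable kernel} (or, for the quadratic subextension, Proposition \ref{Debes 92}); the excluded $n$ are those for which no such $d$ exists. Finally, in case (4) I take $H=Z(G)$: the abelian kernel makes $(*)$ automatic by Proposition \ref{solvable kernel}, while the hypothesis that $G/Z(G)$ is neither solvable of even order nor of order $\leq 3$ is designed so that \S7 yields $m_{G/Z(G),\Qq}\geq 1$ with elements of order $\geq 3$ available for the local condition; this is where ${\rm{GL}}_2(\Ff_4)$, for which $G/Z(G)\cong A_5$, is captured over $\Qq$ although it escapes Theorem \ref{thm 1}.

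The step I expect to be the main obstacle is not any single invocation of the criteria but proving that the exceptional lists are sharp. This requires, on one side, the precise $\Qq$-classification of genus-$0$ and genus-$1$ regular realizations from \S7, resting on the interplay of Riemann--Hurwitz with the Branch Cycle Lemma, and on the other side a group-theoretic verification that every non-excluded $G$ admits a non-trivial normal subgroup $H$ whose associated subextensions are forced into genus $\geq 2$ (or $\geq 1$ with usable ramification), while the excluded groups genuinely admit none. The two hardest points are the classification in case (1) of the odd groups all of whose proper quotients have a genus-$0$ realization over $\Qq$, and the determination in case (4) of exactly which quotients $G/Z(G)$ are forced to satisfy $m_{G/Z(G),\Qq}\geq 1$.
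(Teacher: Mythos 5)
Your overall skeleton matches the paper's (criteria of \S3, embeddings from \S4, genera from \S7), and you correctly isolated the key subtlety that one sometimes needs the subextensions of \emph{actual} $G$-realizations to have genus $\geq 2$ rather than $m_{G/H,\Qq}\geq 2$ --- your $\Zz/9\Zz$ discussion is exactly the paper's Branch-Cycle-Lemma argument for $\Zz/8\Zz$ and $\Zz/9\Zz$. But your case (3) fails as written: you propose to choose $d \mid n$ with $m_{D_d,\Qq}\geq 2$ ``supplied by \S7'', and \S7 supplies no such thing. Proposition \ref{RH genus 1} \emph{permits} dihedral realizations of genus $1$ with ramification type $(2,2,2,2)$ over $\Qq$, and such realizations exist for every $n$ (this is why the paper's Lemma \ref{lemma 1 dihedral} asserts only $m_{D_{n/p},\Qq}\geq 1$). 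The only $m\geq 2$ statement available for dihedral groups is $m_{D_p,\Qq}\geq 2$ for primes $p\geq 11$, via \cite{DF94}, which is precisely why the hypothesis ``a prime factor $\geq 11$'' appears in Theorem \ref{thm 3.1}(2) but not here: for, say, $n=15$ your plan finds no usable divisor. The paper instead takes $H$ the unique normal subgroup of order $p$ ($p$ the smallest prime factor of $n$) and runs Theorem \ref{genus1} with Addendum (1)', whose local hypothesis requires a genuinely new construction you never supply: for each prime $q\equiv 1 \bmod n$, a $D_n$-extension of $\Qq$ with ramification index $n$ at $q$ (Lemma \ref{lemma 2 dihedral}, resting on the Grunwald-type result \cite{DLN17}), whose $D_{n/p}$-subextension then has index $\geq n/p\geq 3$ at $q$ and embeds into infinitely many $D_n$-extensions by uniqueness of $H$ and Proposition \ref{solvable kernel}(2). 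This prescribed-local-ramification input is the missing idea, and without it the genus-$1$ dihedral realizations cannot be ruled out.

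Case (4) is also misdirected, in the opposite way: you route through $m_{G/Z(G),\Qq}\geq 1$ plus the local condition of Theorem \ref{genus1}, but Proposition \ref{solvable kernel} gives no control whatsoever on ramification, and you have no analogue of Lemma \ref{lemma 2 dihedral} for a general quotient $G/Z(G)$; in fact no detour is needed, since Proposition \ref{list genus leq 1}(2) says a genus-$\leq 1$ regular Galois group over $\Qq$ is solvable of even order or $\Zz/3\Zz$, so the hypothesis yields $m_{G/Z(G),\Qq}\geq 2$ outright and Theorem \ref{genus2} finishes (this is how ${\rm{GL}}_2(\Ff_4)$, with $G/Z(G)\cong A_5$, is captured). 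Two smaller but real defects: in case (1) you state the exceptional family as ``$G\cong(\Zz/p\Zz)^k\rtimes\Zz/3\Zz$'' without the restriction $k\in\{1,2\}$; for $k\geq 3$ these groups \emph{are} covered, and handling them is the hardest group-theoretic step of the paper's proof (exhibiting inside $(\Zz/p\Zz)^k$ the invariant subgroup generated by $y$ and $f(\bar 1)(y)$, where $y+f(\bar 1)(y)+f(\bar 2)(y)=0$, of order $\leq p^2$, whence a quotient $\not\cong\Zz/3\Zz$) --- you flag this as hard but leave it undone. And in case (2) your characterization of the exceptions (``all proper quotients cyclic of order in $\{1,2,3,4,6\}$'') is inaccurate: the bad abelian quotients over $\Qq$ are $\Zz/2\Zz$, $\Zz/3\Zz$, $\Zz/4\Zz$, $\Zz/6\Zz$, $(\Zz/2\Zz)^2$, $(\Zz/2\Zz)^3$ by Proposition \ref{list genus leq 1 abelian}(2), and $\Zz/8\Zz$, $\Zz/9\Zz$ satisfy your description yet are not exceptions --- they are rescued by the finer argument you yourself sketched, so your summary of the case contradicts your own (correct) earlier observation.
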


\subsection{Finite groups with no parametric extension over $k$} Next, we give other examples of finite groups with no parametric extension over $k$ (in addition to those already given in Theorems \ref{thm 1} and \ref{thm 1.1}).

\begin{theorem} \label{thm 2}
Assume that one of the following conditions holds:

\vspace{0.5mm}

\noindent
{\rm{(1)}} $G$ is one of the following abelian groups: $\Zz/6\Zz$, $\Zz/12\Zz$, $\Zz/2\Zz \times \Zz/4\Zz$, $\Zz/2\Zz \times \Zz/6\Zz$, $(\Zz/3\Zz)^2$, $(\Zz/2\Zz)^3$, $(\Zz/2\Zz)^4$,

\vspace{0.5mm}

\noindent
{\rm{(2)}} $G=S_n$, where $n$ is any integer $\geq 6$.

\vspace{0.5mm}

\noindent
Then, $G$ has no parametric extension over $\Qq$. Furthermore, if $G=S_n$ and $n \geq 8$, then $G$ has no parametric extension over $k$.
\end{theorem}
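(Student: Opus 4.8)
The plan is to derive each case from the criteria of \S3 by choosing a suitable normal subgroup $H$ and verifying the hypotheses, the recurring work being (i) the embedding conditions and (ii) the supply of conjugacy classes generating maximal cyclic subgroups. For the abelian groups in part (1) the embedding input is free: being abelian, $G$ is a Galois group over $\Qq$, so condition $(*)$ holds by Proposition \ref{solvable kernel}, while in the index-$2$ cases condition $(**)$ holds because $\Zz/2\Zz$ splits off as a direct factor. For the class condition I would use that a maximal-order element of each of these groups generates a maximal cyclic subgroup, and that such classes are realizable as inertia (condition (d)) via Kummer-type covers, the Branch Cycle Lemma forcing only that each class occurs together with its coprime powers. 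Concretely: treat $\Zz/6\Zz$ by Theorem \ref{cyclic2} with $H=\Zz/3\Zz$ (its unique index-$2$ subgroup), taking the two order-$6$ elements as $C_1,C_2$, so $s=2$ with $C_2=C_1^{5}$ a power of $C_1$; treat $\Zz/12\Zz$, $\Zz/2\Zz\times\Zz/4\Zz$, $\Zz/2\Zz\times\Zz/6\Zz$, $(\Zz/3\Zz)^2$ by Theorem \ref{cyclic} with cyclic quotient of order $6,4,6,3$, selecting in each case three distinct maximal-order classes lying outside $H'$ (a count check shows at least three remain for \emph{every} admissible $H'$, e.g. the four order-$4$ elements of $\Zz/2\Zz\times\Zz/4\Zz$ avoid both order-$2$ subgroups with cyclic quotient). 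The genus jump from ``$\geq 3$ branch points'' to ``genus $\geq 2$'' is exactly the §7 classification invoked in the proof of Theorem \ref{cyclic}.

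For $(\Zz/2\Zz)^3$ and $(\Zz/2\Zz)^4$ neither Theorem \ref{cyclic} nor \ref{cyclic2} applies (no cyclic quotient of order $\geq 3$, and many index-$2$ subgroups), so I would instead use Theorem \ref{genus0} with $G/H\cong(\Zz/2\Zz)^2$. Every $H'$ with $G/H'\cong(\Zz/2\Zz)^2$ has order $2$ (resp.\ $4$), leaving $6$ (resp.\ $12$) involutions outside $H'$; any five of them give $C_1,\dots,C_5$, each generating a maximal cyclic subgroup of order $2$, pairwise non-powers (automatic in exponent $2$), and realizable as inertia. Since a $(\Zz/2\Zz)^2$-cover with $r$ branch points has genus $r-3$ by Riemann--Hurwitz, the five resulting branch points force genus $\geq 2$, as required by Theorem \ref{genus0}.

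For part (2), $G=S_n$, the only nontrivial proper normal subgroup is $H=A_n$, with $G/H\cong\Zz/2\Zz$ and $m_{\Zz/2\Zz,k}=0$; hence Theorems \ref{genus2}--\ref{genus1} are unavailable and I must use Theorem \ref{cyclic2} over $\Qq$ and Theorem \ref{genus0} over a general $k$. Uniqueness of $A_n$ comes from $S_n$ having abelianization $\Zz/2\Zz$, and condition $(**)$ is Proposition \ref{GAR 2}(2). The ``infinitely many'' embedding condition I would obtain from the GAR-property of $A_n$ (Proposition \ref{GAR 1}) whenever $n\neq 6$, and for the single value $n=6$ from Proposition \ref{Debes 92}, which only requires one $\Qq$-regular $S_6$-realization of genus $\geq 2$ whose even-order inertia classes number two or three with $\Qq$-rational branch points. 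The decisive ingredient is the stock of \emph{odd} classes generating maximal cyclic subgroups: over $\Qq$, Theorem \ref{cyclic2} needs only $s\geq 3$ such classes (or $s=2$ in power position), whereas over a general $k$, Theorem \ref{genus0} needs five pairwise non-powers. A cycle-type analysis shows $S_6$ has exactly three (the $6$-cycle, the $(2,3)$-type, and the $4$-cycle --- note a transposition is \emph{not} maximal cyclic, since adjoining a disjoint $3$-cycle enlarges it) and $S_7$ exactly four, while five independent such classes first appear at $n=8$ (for instance the classes of type $(8),\,(3,4,1),\,(2,5,1),\,(6,1,1),\,(3,3,2)$, of orders $8,12,10,6,6$); these counts are precisely what produce the thresholds $n\geq 6$ over $\Qq$ and $n\geq 8$ over $k$.

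The hard part will be condition (d): realizing each chosen class as an inertia class of an actual $k$-regular realization of $G$. For the abelian groups this is settled by explicit Kummer covers, but for $S_n$ it requires regular realizations whose branch cycles include the prescribed \emph{non-transposition} cycle types, together with, in the $n=6$ embedding step, a realization with a tightly controlled set of even-order rational branch points; here I would rely on the regular-realization machinery of §6 (cf.\ Lemma 6.7). A secondary delicate point is the bookkeeping inside Theorem \ref{cyclic2}: one must rule out that the quadratic subextension $E^{A_n}/\Qq(T)$ has merely two rational branch points, which combines the class count with the Branch Cycle Lemma and the rational-point nonexistence statement of Proposition \ref{r leq 4}.
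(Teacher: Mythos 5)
Your proposal is correct and follows essentially the same route as the paper: Theorem \ref{cyclic2} for $\Zz/6\Zz$ and for $S_6,S_7$ over $\Qq$, Theorem \ref{cyclic} with the same cyclic quotients for $\Zz/12\Zz$, $\Zz/2\Zz\times\Zz/4\Zz$, $\Zz/2\Zz\times\Zz/6\Zz$, $(\Zz/3\Zz)^2$, Theorem \ref{genus0} for the elementary abelian $2$-groups and for $S_n$ ($n\ge 8$) with $H=A_n$, with Proposition \ref{GAR 2} and Lemma \ref{symmetric} supplying the embedding and inertia inputs, and your five classes for $n=8$ coincide exactly with the paper's. The only divergences are minor: you invoke Proposition \ref{GAR 1} for the ``infinitely many'' embedding condition when $n\ge 7$ where the paper uniformly uses Proposition \ref{Debes 92} applied to the explicit trinomial realization with inertia invariant $([1^{n-2}2^1],[1^1(n-1)^1],[n^1])$ (which also settles your deferred $n=6$ case), and for general $n\ge 9$ you assert rather than exhibit the five odd maximal-cyclic classes, which the paper produces via the families $[m^1(n-m)^1]$ ($n$ odd) and $[1^1m^1(n-m-1)^1]$ ($n$ even) for $n\ge 11$ together with ad hoc lists for $n=9,10$.
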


\subsection{Conjectural non-parametric sets}

First, we give conjectural examples of (infinite) sets of regular realizations over the number field $k$ with the same Galois group which are not $n$-parametric over $k$ for large integers $n$.

\begin{theorem} \label{thm 3}
Assume that the following three conditions hold:

\vspace{0.5mm}

\noindent
{\rm{(1)}} the Uniformity Conjecture of \S2.3 holds,

\vspace{0.5mm}

\noindent
{\rm{(2)}} $G$ is any group as in Theorem \ref{thm 1},

\vspace{0.5mm}

\noindent
{\rm{(3)}} $G$ is a Galois group over $k$ \footnote{\label{foot2}If $G$ is not a Galois group over $k$, then the conclusion of the result fails trivially.}.

\vspace{0.5mm}

\noindent
Then, given an integer $r_0 \geq 1$, there exists an integer $n \geq 1$ (depending only on $r_0$, $k$, and $|G|$) such that the set which consists of all the $k$-regular Galois extensions of $k(T)$ with Galois group $G$ and at most $r_0$ branch points is not $n$-parametric over $k$.
\end{theorem}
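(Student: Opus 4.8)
The plan is to recognize that the conclusion asserted here is \emph{word for word} conclusion (2) of Theorem \ref{genus2}. Consequently I would not attempt a fresh argument; instead I would reduce the statement to a single application of that criterion. The only thing to check is that every group $G$ covered by Theorem \ref{thm 1} which is a Galois group over $k$ (hypothesis (3)) satisfies the two hypotheses of Theorem \ref{genus2}: that $G$ possesses a normal subgroup $H$ for which condition ($*$) holds and for which $m_{G/H,k} \geq 2$. Granting this, conclusion (2) of Theorem \ref{genus2}, which is conditional on the Uniformity Conjecture (hypothesis (1) here), applies verbatim and produces exactly the integer $n$ and the failure of $n$-parametricity claimed for the set of all $k$-regular realizations of $G$ with at most $r_0$ branch points.

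First I would revisit the proof of Theorem \ref{thm 1} (in \S6), observing that in each of its four cases that proof already exhibits such an $H$ and applies the minimal-genus-$\geq 2$ criterion. In case (1) one takes $H=\{1\}\times G_2$, so that $G/H\cong G_1$ and $m_{G/H,k}=m_{G_1,k}\geq 2$; in case (4) one takes $H=Z(G)$, so that $G/H=G/Z(G)$ has minimal genus $\geq 2$ by the classification of \S7 once $G/Z(G)$ is neither solvable nor $A_5$; and in the arithmetic cases (2) and (3) the groups are excluded precisely so that some cyclic or abelian quotient $G/H$ with $m_{G/H,k}\geq 2$ survives. The embedding condition ($*$) is supplied exactly as in that proof, via the results of \S4 (e.g.\ Proposition \ref{solvable kernel} when the relevant kernel is abelian, as for $H=Z(G)$, and Propositions \ref{GAR 1}--\ref{GAR 2} otherwise), using hypothesis (3) that $G$ is a Galois group over $k$. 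Since both hypotheses of Theorem \ref{genus2} are thereby met, invoking its conclusion (2) finishes the argument, and the stated dependence of $n$ on $r_0$, $k$, and $|G|$ alone is inherited directly from that conclusion.

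The hard part will not be any computation but rather the bookkeeping needed to confirm, uniformly across the families of Theorem \ref{thm 1}, that the subgroup $H$ produced there genuinely yields $m_{G/H,k}\geq 2$ and not merely $m_{G/H,k}\geq 1$. This distinction is the crux: only Theorem \ref{genus2} carries the Uniformity-conditional conclusion (2), whereas the weaker Theorem \ref{genus1}, which tolerates minimal genus $1$, offers no such conjectural statement. For this reason I would restrict attention to the groups of Theorem \ref{thm 1} (which are set up for the genus-$\geq 2$ criterion) and deliberately avoid the additional $\Qq$-specific examples of Theorem \ref{thm 1.1}, where the genus-$1$ criterion intervenes and the present conclusion would be out of reach. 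Once the genus-$\geq 2$ verification is in place, the Uniformity Conjecture feeds straight into Theorem \ref{genus2}, and the proof is complete.
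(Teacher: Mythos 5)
Your proposal coincides with the paper's own proof: Theorem \ref{thm 3} is established in \S 6 exactly as you describe, by exhibiting, for each family in Theorem \ref{thm 1}, a normal subgroup $H$ satisfying the embedding condition ($*$) together with $m_{G/H,k}\geq 2$, and then invoking the Uniformity-conditional conclusion (2) of Theorem \ref{genus2}, with hypothesis (3) ensuring (via Remark \ref{rem1}) that one may assume $G$ is a regular Galois group over $k$; you also correctly identify why the genus-$\geq 2$ requirement forces the exclusion of the $\Qq$-specific cases of Theorem \ref{thm 1.1} that rely on Theorem \ref{genus1}. One negligible inaccuracy: in case (1) (direct products) the paper obtains condition ($*$) by a direct Hilbert-irreducibility argument on $E_2F_1/F_1(T)$ rather than via the propositions of \S 4 (Propositions \ref{GAR 1}--\ref{GAR 2} would not cover an arbitrary non-trivial $G_2$), but since you defer to the proof of Theorem \ref{thm 1} for the verification, this does not affect your argument.
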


Now, we give other examples in the specific case $k=\Qq$ (in addition to those already given in Theorem \ref{thm 3}).

\begin{theorem} \label{thm 3.1}
Assume that the following three conditions hold:

\vspace{0.5mm}

\noindent
{\rm{(1)}} the Uniformity Conjecture of \S2.3 holds,

\vspace{0.5mm}

\noindent
{\rm{(2)}} either $G$ is any group as in conditions {\rm{(1)}}, {\rm{(2)}}, {\rm{(4)}} of Theorem \ref{thm 1.1} or $G$ is the dihedral group $D_n$ (with 2n elements), where $n$ is any integer which is not a prime and which has a prime factor $\geq 11$,

\vspace{0.5mm}

\noindent
{\rm{(3)}} $G$ is a Galois group over $\Qq$ \footref{foot2}.

\vspace{0.5mm}

\noindent
Then, given an integer $r_0 \geq 1$, there exists an integer $n \geq 1$ (depending only on $r_0$ and $|G|$) such that the set which consists of all the $\Qq$-regular Galois extensions of $\Qq(T)$ with Galois group $G$ and at most $r_0$ branch points is not $n$-parametric over $\Qq$.
\end{theorem}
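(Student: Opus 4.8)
The plan is to obtain Theorem \ref{thm 3.1} as a direct application of part (2) of Theorem \ref{genus2}, in exactly the same way that Theorem \ref{thm 3} is deduced from it for the groups of Theorem \ref{thm 1}. Since here $k=\Qq$ is fixed, the integer $n$ produced by part (2) of Theorem \ref{genus2} depends only on $r_0$ and $|G|$, as required. Thus it suffices to show that, for every group $G$ allowed by condition (2), there is a \emph{non-trivial proper} normal subgroup $H \trianglelefteq G$ satisfying the two hypotheses of Theorem \ref{genus2}, namely the embedding condition $(*)$ of \S4 and the minimal genus condition $m_{G/H,\Qq}\geq 2$. For the classes coming from conditions (1), (2), (4) of Theorem \ref{thm 1.1}, this is precisely what is checked, group by group, in the proof of Theorem \ref{thm 1.1} in \S6: the criterion invoked there is Theorem \ref{genus2} itself, so its part (2) applies verbatim once the Uniformity Conjecture is assumed. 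It therefore only remains to treat the new dihedral subclass, and to record why $k=\Qq$ lets us include all of (1), (2), (4) while the dihedral case must be restricted.

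The embedding condition can be handled uniformly. In every case I would choose $H$ to be \emph{solvable}: a suitable factor for $G$ abelian, a suitable normal subgroup for $G$ of odd order, $H=Z(G)$ in the central case, and (below) a rotation subgroup in the dihedral case. For any solvable $H$, Proposition \ref{solvable kernel}(1) reduces $(*)$ to the single requirement that $G$ be a Galois group over $\Qq$, which is hypothesis (3). Consequently the only genuine constraint on the choice of $H$ is the genus inequality $m_{G/H,\Qq}\geq 2$, and this is exactly what the genus classification of \S7 (Propositions 7.1 and 7.2, \ref{list genus leq 1 abelian}, \ref{RH genus 1}, \ref{r leq 4}) is used to verify. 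The excluded abelian groups in (2), the ``order $\leq 3$ or even-solvable'' exclusions for $G/Z(G)$ in (4), and the dihedral restriction are precisely the configurations this classification leaves out, i.e. those for which every relevant quotient would carry a $\Qq$-regular realization of genus $\leq 1$.

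For the dihedral group $G=D_n=\langle \rho,\sigma\rangle$ with $\rho$ of order $n$, $n$ composite and admitting a prime factor $p\geq 11$, I would take $H=\langle \rho^{p}\rangle$, the unique subgroup of order $n/p$ of the rotation group $\langle\rho\rangle$. It is normal in $D_n$ (reflections invert rotations), it is non-trivial since $n/p\geq 2$ as $n$ is composite, it is proper, and one has $G/H\cong D_p$. As $H$ is cyclic, Proposition \ref{solvable kernel}(1) together with hypothesis (3) gives $(*)$. Finally the bound $p\geq 11$ guarantees $m_{D_p,\Qq}\geq 2$ by the dihedral analysis of \S7: the Branch Cycle Lemma forbids $\Qq$-rational branch points carrying a rotation-type inertia class of order $p$ once $p>3$, so a $\Qq$-regular realization of $D_p$ needs enough branch points that Riemann--Hurwitz pushes its genus above $1$ as soon as $p\geq 11$ (the dihedral groups $D_p$ with $p\in\{2,3,5,7\}$ being among the few admitting a $\Qq$-regular realization of genus $\leq 1$). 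Applying part (2) of Theorem \ref{genus2} with this $H$ then finishes this case.

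The main obstacle is the verification of $m_{G/H,\Qq}\geq 2$, which is where the hypotheses on $G$ are actually used. Over $\Qq$, as opposed to a general number field, the low-genus realizations are heavily constrained by the rationality conditions of the Branch Cycle Lemma, and delimiting precisely the finite list of quotients admitting a genus-$\leq 1$ realization---the excluded abelian groups, the small dihedral groups $D_p$ with $p\leq 7$, and the small or even-solvable quotients $G/Z(G)$---is the technical heart, and is carried out in \S7. Granting that classification, each choice of $H$ above meets both hypotheses of Theorem \ref{genus2}, and the conclusion follows at once from its part (2).
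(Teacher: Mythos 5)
Your overall architecture does match the paper's: Theorem \ref{thm 3.1} is indeed obtained by feeding the group-by-group verifications of \S6 into the conjectural part (2) of the non-parametricity criterion, with the embedding condition ($*$) supplied by Proposition \ref{solvable kernel}(1) (via Remark \ref{odd order} or hypothesis (3)), and in the dihedral case the paper takes exactly your subgroup, namely the rotation subgroup of order $n/p$ with $G/H\cong D_p$. But your proposal has two genuine gaps. The serious one is the dihedral genus bound: you claim $m_{D_p,\Qq}\geq 2$ for $p\geq 11$ follows from ``the dihedral analysis of \S7'', i.e.\ from the Branch Cycle Lemma plus Riemann--Hurwitz. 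It does not. Propositions \ref{list genus 0} and \ref{RH genus 1} only show that over $\Qq$ a genus-$\leq 1$ realization of $D_p$ ($p\geq 5$) must have ramification type $(2,2,2,2)$, with all inertia generated by reflections; on such a tuple the Branch Cycle Lemma is vacuous (involution classes are rational) and Riemann--Hurwitz gives genus exactly $1$ \emph{for every} $p$, so your argument about rotation-type inertia of order $p$ never engages the genus-$1$ case. Your own parenthetical shows the method cannot suffice: $D_5$ and $D_7$ do admit $\Qq$-regular genus-$1$ realizations of type $(2,2,2,2)$, and the group-theoretic and ramification data are identical for $p=11$. Ruling out these four-branch-point involution realizations for $p\geq 11$ is a deep arithmetic input --- the paper invokes \cite[Theorem 5.1]{DF94}, which rests on modular-curve/Mazur-type results --- and this input is entirely missing from your proof. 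Note also that Proposition \ref{list genus leq 1}(2) cannot help here, since $D_p$ \emph{is} solvable of even order.

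The second gap concerns your uniform reduction ``the only genuine constraint on $H$ is $m_{G/H,\Qq}\geq 2$, and the criterion invoked in \S6 is Theorem \ref{genus2} itself''. This fails for $G=\Zz/8\Zz$ and $G=\Zz/9\Zz$, both of which are covered by condition (2) of Theorem \ref{thm 1.1} (they are not on the exclusion list): every non-trivial proper quotient of these groups is $\Zz/4\Zz$, $\Zz/2\Zz$, resp.\ $\Zz/3\Zz$, each of minimal genus $0$ over $\Qq$, so no choice of $H$ makes Theorem \ref{genus2} applicable. The paper instead applies Lemma \ref{lemma genus2} directly: by the Branch Cycle Lemma, every $\Qq$-regular $\Zz/8\Zz$-realization $E/\Qq(T)$ has at least $\varphi(8)=4$ totally ramified branch points, whence the \emph{specific} subextension $E^{\Zz/2\Zz}/\Qq(T)$ has genus at least $2$ by Riemann--Hurwitz, even though abstract $\Zz/4\Zz$-realizations of genus $0$ exist; part (2) of that lemma, under the Uniformity Conjecture, then gives the conclusion for these two groups. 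So while your plan is correct for conditions (1) and (4) and for the bulk of the abelian case, it leaves $\Zz/8\Zz$ and $\Zz/9\Zz$ uncovered as written, and the dihedral case needs the D\`ebes--Fried theorem rather than the elementary tools you cite.
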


\section{Proofs of Theorems 5.1-5}

\subsection{Direct products}

Assume $G=G_1 \times G_2$, where $G_1$ is any finite group with $m_{G_1,k} \geq 2$ and $G_2$ is any non-trivial finite group. Below, we prove that the conclusions of Theorems \ref{thm 1} and \ref{thm 3} hold with $G$.

Obviously, we may and will assume that $G$ is a regular Galois group over $k$. Since $m_{G_1,k} \geq 2$, it suffices to find a Galois extension $F_1/k$ with Galois group $G_1$ which embeds into infinitely many Galois extensions of $k$ with Galois group $G$; see Theorem \ref{genus2}. Let $F_1/k$ be a Galois extension with Galois group $G_1$ and $E_2/k(T)$ a $k$-regular Galois extension with Galois group $G_2$ (such extensions exist as $G$ is a regular Galois group over $k$). Consider the extension $E_2F_1 / F_1(T)$. By Hilbert's irreducibility theorem, there exist infinitely many $t_0 \in k$ such that the specializations of $E_2F_1/F_1(T)$ at $t_0$ are distinct and all have Galois group $G_2$ (as $G_2$ is not trivial). For such a $t_0$, one has $(E_2 F_1)_{t_0} = (E_2)_{t_0} F_1$. Hence, $(E_2)_{t_0}/k$ has Galois group $G_2$ and $(E_2)_{t_0}$, $F_1$ are linearly disjoint over $k$. In particular, $(E_2)_{t_0} F_1/k$ has Galois group $G$, as needed.

\subsection{Groups whose order has only large prime factors}

This section is organized as follows. In \S6.2.1, we prove the conclusions of Theorems \ref{thm 1} and \ref{thm 3} for the group $G$, where $G$ is any group as in condition (2) of Theorem \ref{thm 1}. \S6.2.2 is devoted to more properties on the set of prime numbers that appear in condition (2) of Theorem \ref{thm 1}. In \S6.2.3, we prove the conclusions of Theorems \ref{thm 1.1} and \ref{thm 3.1} for the group $G$, where $G$ is any group as in condition (1) of Theorem \ref{thm 1.1}.

\subsubsection{Over the number field $k$} \label{6.2.1}

Denote the set of all prime numbers $p$ such that $[k(\zeta_p):k] \leq 2$ by $\mathcal{S}$. Assume that the order $|G|$ of $G$ has no prime factor in $\mathcal{S}$ and $|G|$ is neither 1 nor a prime number. As 2 is in $\mathcal{S}$, the group $G$ has odd order and, since it is not cyclic of prime order, it is not simple. Let $H$ be a non-trivial proper normal subgroup of $G$. By Theorem \ref{genus2} and Remark \ref{odd order}, it suffices to prove the inequality $m_{G/H,k} \geq 2$ to get the desired conclusions.

First, assume that there exists a $k$-regular Galois extension of $k(T)$ with Galois group $G/H$ and genus 1. By Proposition \ref{RH genus 1}, $G/H$ contains an element of order $n$ for some $n \in \{2,3,4,6\}$. But this last conclusion cannot happen since $|G/H|$ has no prime factor in $\mathcal{S}$ and $\{2,3\} \subset \mathcal{S}$.

Now, assume that there exists a $k$-regular Galois extension $E/k(T)$ with Galois group $G/H$ and genus 0. Since $|G/H|$ is coprime to 6, one has $G/H \cong \Zz/n\Zz$ for some integer $n$ which is coprime to 6 and $E/k(T)$ has exactly two branch points; see Proposition \ref{list genus 0}. Given a prime divisor $p$ of $n$, consider the subextension $E'/k(T)$ of $E/k(T)$ that has degree $p$. By the definition of $\mathcal{S}$, one has $[k(\zeta_p):k] \geq 3$. Then, by the Branch Cycle Lemma, $E'/k(T)$ has at least $3$ branch points, which cannot happen. Hence, one has $m_{G/H,k} \geq 2$, thus ending the proof.

\subsubsection{More on the set $\mathcal{S}$}

Below, we make the set $\mathcal{S}$ of \S\ref{6.2.1} more explicit, at the cost of making it larger. Denote the set of all prime numbers by $\mathcal{P}$. Given $p \in \mathcal{P}$, let $\zeta_p$ be a primitive $p$-th root of unity.

\begin{proposition} \label{S1}
{\rm{(1)}} One has $\mathcal{S} \subset \{p \in \mathcal{P} \, : \, p \leq 2[k:\Qq]+1\}$.

\vspace{0.5mm}

\noindent
{\rm{(2)}} One has $\mathcal{S} \subset \{2,3\} \cup \{p \in \mathcal{P} \setminus \{2,3\} \, : \, k \cap \Qq(\zeta_p) \not= \Qq\}$.

\vspace{0.5mm}

\noindent
{\rm{(3)}} One has $\mathcal{S} \subset \{2,3\} \cup \{p \in \mathcal{P} \setminus \{2,3\} \, : \,  p \, \,  ramifies \,\,  in \, \, k/\Qq\}$.
\end{proposition}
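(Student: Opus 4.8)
The plan is to reduce all three containments to a single degree computation for the compositum $k(\zeta_p) = k \cdot \Qq(\zeta_p)$. Since $\Qq(\zeta_p)/\Qq$ is Galois (indeed abelian, with $[\Qq(\zeta_p):\Qq]=p-1$), the standard formula for the degree of a compositum with a Galois extension gives $[k(\zeta_p):k] = [\Qq(\zeta_p) : \Qq(\zeta_p)\cap k]$, where the intersection is taken inside a fixed $\overline{\Qq}$. Combining this with the tower $\Qq \subseteq \Qq(\zeta_p)\cap k \subseteq \Qq(\zeta_p)$ yields the multiplicativity relation
\[
p-1 \,=\, [\Qq(\zeta_p):\Qq(\zeta_p)\cap k] \cdot [\Qq(\zeta_p)\cap k : \Qq],
\]
whose first factor equals $[k(\zeta_p):k]$. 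The hypothesis $p \in \mathcal{S}$ says precisely that this first factor lies in $\{1,2\}$, and all three statements will fall out of this relation.

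For part (1), since $[\Qq(\zeta_p)\cap k:\Qq] \leq [k:\Qq]$ and the first factor is at most $2$, the relation gives $p-1 \leq 2[k:\Qq]$, that is $p \leq 2[k:\Qq]+1$, as claimed.

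For part (2), I would fix $p \in \mathcal{S}$ with $p \notin \{2,3\}$, so that $p \geq 5$ and $p-1 \geq 4$. Whether the first factor equals $1$ or $2$, the second factor $[\Qq(\zeta_p)\cap k:\Qq]$ equals $p-1$ or $(p-1)/2$, hence is at least $2$; in particular $\Qq(\zeta_p)\cap k \neq \Qq$, which is the desired assertion. (The exclusion of $2$ and $3$ is genuine: for these primes $[\Qq(\zeta_p):\Qq]\le 2$, so $p\in\mathcal S$ carries no information.)

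Part (3) is the only step requiring arithmetic input beyond field-degree bookkeeping, and I expect it to be the main, though still routine, point. Granting part (2), I set $F = k \cap \Qq(\zeta_p)$, a subfield with $\Qq \subsetneq F \subseteq k$. I would then invoke the classical fact that $p$ is totally ramified in $\Qq(\zeta_p)/\Qq$, coming from the factorization $(p) = (1-\zeta_p)^{p-1}$. Total ramification is inherited by every nontrivial subextension, so $p$ ramifies in $F/\Qq$; and since ramification indices are multiplicative along the tower $\Qq \subseteq F \subseteq k$, the prime $p$ also ramifies in $k/\Qq$. This establishes the containment of part (3), and in fact exhibits (3) as a formal consequence of (2) together with the total ramification of $p$ in the cyclotomic field.
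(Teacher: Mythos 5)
Your proof is correct and takes essentially the same route as the paper's: both arguments rest on the relation $p-1=[k(\zeta_p):k]\cdot[k\cap\Qq(\zeta_p):\Qq]$ coming from the Galois compositum formula (the paper states the resulting inequality $[k(\zeta_p):k]\geq (p-1)/[k:\Qq]$ for part (1) and proves part (2) by contradiction rather than directly). For part (3) the paper likewise combines part (2) with the total ramification of $p$ in $\Qq(\zeta_p)/\Qq$ and multiplicativity of ramification indices, merely phrased contrapositively (if $p$ were unramified in $k/\Qq$, the ramification index of $p$ in $\Qq(\zeta_p)/\Qq$ would be at most $[\Qq(\zeta_p):k\cap\Qq(\zeta_p)]\leq (p-1)/2$, a contradiction).
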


\begin{proof}
Combine the definition of $\mathcal{S}$ and the inequality $[k(\zeta_p):k] \geq (p-1)/{[k:\Qq]}$ ($p$ prime)
to get the first inclusion. For the second one, fix $p \in \mathcal{S} \setminus \{2,3\}$ and suppose $k \cap \Qq(\zeta_p) = \Qq$. Then, $[k(\zeta_p):k] = p-1$. As $p$ is in $\mathcal{S}$, one gets $p \leq 3$, which cannot happen. As to the third one, let $p \in \mathcal{S} \setminus \{2,3\}$. By (2), one has $k \cap \Qq(\zeta_p) \not= \Qq$. If $p$ does not ramify in $k/\Qq$, then it does not ramify in $(k \cap \Qq(\zeta_p)) / \Qq$ either. In particular, the ramification index of $p$ in $\Qq(\zeta_p)/\Qq$ is at most $[\Qq(\zeta_p): k \cap \Qq(\zeta_p)] \leq (p-1)/2$, which cannot happen.
\end{proof}

As already said, one always has $\{2,3\} \subset \mathcal{S}$. In Proposition \ref{S2} below, we give sufficient conditions on $k$ for the converse to hold.

\begin{proposition} \label{S2}
One has $\mathcal{S} = \{2,3\}$ in each of the following cases:

\vspace{0.5mm}

\noindent
{\rm{(1)}} $k=\Qq$,

\vspace{0.5mm}

\noindent
{\rm{(2)}} $k/\Qq$ ramifies at most at 2 and 3.

\vspace{0.5mm}

\noindent
{\rm{(3)}} $k/\Qq$ contains no non-trivial cyclic subextension.
\end{proposition}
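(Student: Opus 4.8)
The plan is to reduce everything to the three inclusions already established in Proposition \ref{S1}, since the reverse inclusion $\{2,3\}\subseteq\mathcal{S}$ is automatic (as recalled just before the statement: $\zeta_2=-1\in\Qq$ gives $[k(\zeta_2):k]=1$, and $[k(\zeta_3):k]\le[\Qq(\zeta_3):\Qq]=2$). Thus in each of the three cases it suffices to show that no prime $p\ge 5$ lies in $\mathcal{S}$, and I would handle the three cases by invoking the appropriate part of Proposition \ref{S1}.

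For case (1) the computation is immediate: $[\Qq(\zeta_p):\Qq]=p-1$, so $p\in\mathcal{S}$ forces $p-1\le 2$, i.e.\ $p\le 3$; alternatively one may note that case (1) is the special case of case (2) where $k/\Qq$ is unramified everywhere. For case (2) I would apply part (3) of Proposition \ref{S1}: any $p\in\mathcal{S}\setminus\{2,3\}$ must ramify in $k/\Qq$. By hypothesis the only primes ramifying in $k/\Qq$ are $2$ and $3$, so no such $p$ exists, whence $\mathcal{S}\subseteq\{2,3\}$.

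The only case carrying genuine content is case (3), where I would instead use part (2) of Proposition \ref{S1}: any $p\in\mathcal{S}\setminus\{2,3\}$ satisfies $k\cap\Qq(\zeta_p)\ne\Qq$. The key observation is purely Galois-theoretic: the cyclotomic extension $\Qq(\zeta_p)/\Qq$ is cyclic, its Galois group being $(\Zz/p\Zz)^{\times}$, and every intermediate field of a cyclic extension is again cyclic over the base. Hence $k\cap\Qq(\zeta_p)$ is a cyclic subextension of $\Qq$ contained in $k$; were it non-trivial, it would contradict the hypothesis that $k/\Qq$ contains no non-trivial cyclic subextension. Therefore $k\cap\Qq(\zeta_p)=\Qq$ for every $p\ge 5$, and again $\mathcal{S}\subseteq\{2,3\}$.

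Overall there is no serious obstacle: all the arithmetic has been front-loaded into Proposition \ref{S1}, so these three statements are essentially immediate corollaries. The only point requiring any care is matching each hypothesis on $k$ to the correct inclusion of Proposition \ref{S1} (ramification for case (2), the intersection $k\cap\Qq(\zeta_p)$ for case (3)), together with the elementary remark that subextensions of a cyclic extension remain cyclic over the base field.
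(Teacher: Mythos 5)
Your proof is correct, and for case (3) it takes a genuinely different route from the paper. For cases (1) and (2) you argue exactly as the paper does, reading the conclusion off Proposition \ref{S1} (the paper simply says to apply Proposition \ref{S1} in those two cases). For case (3), however, the paper does not use Proposition \ref{S1} at all: it introduces an auxiliary statement (Lemma \ref{lemma easy}) asserting that, for odd $p$, one has $[k(\zeta_p):k] \leq 2$ if and only if $k$ contains the unique subfield $F$ of $\Qq(\zeta_p)$ with $[\Qq(\zeta_p):F]=2$; since $F/\Qq$ is cyclic of degree $(p-1)/2 \geq 2$ when $p \geq 5$, this contradicts the hypothesis. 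You instead recycle part (2) of Proposition \ref{S1} --- any $p \in \mathcal{S} \setminus \{2,3\}$ forces $k \cap \Qq(\zeta_p) \not= \Qq$ --- together with the elementary fact that every intermediate field of the cyclic extension $\Qq(\zeta_p)/\Qq$ is again cyclic over $\Qq$ (its Galois group being a quotient of the cyclic group $(\Zz/p\Zz)^{\times}$), so that $k \cap \Qq(\zeta_p)$ would be a non-trivial cyclic subextension of $k/\Qq$, contradicting the hypothesis. Both arguments ultimately rest on the cyclicity of ${\rm{Gal}}(\Qq(\zeta_p)/\Qq)$, but yours is slightly more economical: it avoids a separate lemma and needs only the quotient-of-cyclic-is-cyclic observation, whereas the paper's lemma extracts more precise information (exactly which subfield $k$ must contain) than the proposition requires. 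In both treatments the equality then follows from the standing remark that $\{2,3\} \subset \mathcal{S}$ always holds.
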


\begin{proof}
If either condition (1) or condition (2) holds, we may apply Proposition \ref{S1} to get $\mathcal{S}=\{2,3\}$. Now, assume that $k/\Qq$ contains no non-trivial cyclic subextension. We then need the following easy lemma, whose proof is left to the reader.

\begin{lemma} \label{lemma easy}
Let $p$ be an odd prime. Then, $[k(\zeta_p):k] \leq 2$ if and only if $k$ contains the unique subfield $F$ of $\Qq(\zeta_p)$ such that $[\Qq(\zeta_p):F]=2$.
\end{lemma}

\noindent
Let $p \in \mathcal{S} \setminus \{2\}$. Then, by Lemma \ref{lemma easy}, the field $k$ contains a subfield $F$ such that $F/\Qq$ is cyclic of degree $(p-1)/2.$ If $p \geq 5$, then $F/\Qq$ has degree at least 2, which cannot happen. Hence, $p = 3$, as needed.
\end{proof}

\subsubsection{Over the rationals}

Here, we assume that $G$ is a non-abelian finite group such that 2 does not divide $|G|$, 3 divides $|G|$, and $G \not= (\Zz/p\Zz)^k \rtimes \Zz/3\Zz$ ($k \in \{1,2\}$, $p$ prime, $p \not=3$). Below, we show that the conclusions of Theorems \ref{thm 1.1} and \ref{thm 3.1} hold for the group $G$.

As the group $G$ has odd order, it suffices to find a non-trivial normal subgroup $H$ of $G$ such that $m_{G/H, \Qq} \geq 2$; see Theorem \ref{genus2} and Remark \ref{odd order}. By Proposition \ref{list genus leq 1}, it then suffices to find a non-trivial proper normal subgroup $H$ of $G$ such that $G/H \not \cong \Zz/3\Zz$. Consider the derived subgroup $D(G)$ of $G$. It is a non-trivial proper characteristic subgroup of $G$ (since $G$ is solvable but not abelian). If $G/D(G) \not \cong \Zz/3\Zz$, we are done. One may then and will assume that $G/D(G) \cong \Zz/3\Zz$. If $D(G)$ has a non-trivial proper characteristic subgroup $H_0$, then one has $G/H_0 \not \cong \Zz/3\Zz$ (since $G/D(G) \cong \Zz/3\Zz$) and we are done. One may then and will assume that $D(G)$ has no non-trivial proper characteristic subgroup. As $D(G)$ is also solvable (and non-trivial), one has $D(G)= (\Zz/p\Zz)^k$ for some prime number $p$ and some positive integer $k$; see, e.g., \cite[3.3.15]{Rob96}. First, assume that $p=3$. Since $G/D(G)$ has order 3, we get that $G$ is a 3-group. Set $|G|=3^l$, where $l$ is a positive integer. If $l \in \{1,2\}$, then $G$ is abelian, which cannot happen. One then has $l \geq 3$. Pick a normal subgroup $H$ of $G$ with order 3. Then, $G/H \not \cong \Zz/3\Zz$ (as $l \geq 3$) and we are done. Now, assume that $p \not=3$. As $G/D(G) \cong \Zz/3\Zz$, one may then apply the Schur-Zassenhaus theorem to get $G = (\Zz/p\Zz)^k \rtimes \Zz/3\Zz$. Let $f: \Zz/3\Zz \rightarrow {\rm{Aut}} ((\Zz/p\Zz)^k)$ be the morphism defining the semidirect product. Denote the elements of $\Zz/3\Zz$ by $\bar{0}$, $\bar{1}$, and $\bar{2}$. As $G$ is not abelian, $f(\bar{1})$ is not the trivial automorphism of $(\Zz/p\Zz)^k$. Hence, $f(\bar{1})$ has order 3. In particular, there exists a non-zero element $y$ of $(\Zz/p\Zz)^k$ such that $y + f(\bar{1})(y)+f(\bar{2})(y)=0$ in $(\Zz/p\Zz)^k$. Then, the subgroup $H$ of $G$ generated by $(y,\bar{0})$ and $(f(\bar{1})(y),\bar{0})$ is a non-trivial proper normal subgroup of $G$ and, as $H$ is abelian, it has order at most $p^2$. From our assumption on $G$, one has $k \geq 3$. Hence, $G/H \not \cong \Zz/3\Zz$, thus ending the proof.

\subsection{Abelian groups}

This section is organized as follows. In \S6.3.1, we prove the conclusions of Theorems \ref{thm 1} and \ref{thm 3} for the group $G$, where $G$ is any group as in condition (3) of Theorem \ref{thm 1}. In \S6.3.2, we prove the conclusions of Theorems \ref{thm 1.1} and \ref{thm 3.1} for the group $G$, where $G$ is any group as in condition (2) of Theorem \ref{thm 1.1}. In \S6.3, we prove the conclusion of Theorem \ref{thm 2} for the group $G$, where $G$ is any group as in condition (1) of Theorem \ref{thm 2}.

\subsubsection{Over the number field $k$}

Here, we assume that $G$ is any non-trivial finite abelian group, but none of the following groups:

\noindent
{\rm{(a)}} $\Zz/n\Zz$ ($n \geq 2$),

\noindent
{\rm{(b)}} $(\Zz/p\Zz)^2$ ($p$ prime),

\noindent
{\rm{(c)}} $\Zz/2\Zz \times \Zz/2p\Zz$ ($p$ prime),

\noindent
{\rm{(d)}} $\Zz/3\Zz \times \Zz/3p\Zz$ ($p$ prime),

\noindent
{\rm{(e)}} $\Zz/2\Zz \times \Zz/8\Zz$, $(\Zz/4\Zz)^2$, $(\Zz/2\Zz)^3$, $(\Zz/2\Zz)^2 \times \Zz/4\Zz$, $(\Zz/3\Zz)^3$, $(\Zz/2\Zz)^4$.

\noindent
Below, we show that the conclusions of Theorem \ref{thm 1} and Theorem \ref{thm 3} hold for the abelian group $G$. By Theorem \ref{genus2}, Remark \ref{odd order}, and Proposition \ref{list genus leq 1 abelian}, it suffices to show that $G$ has a non-trivial proper subgroup $H$ such that $G/H$ is none of the following groups: $\Zz/n\Zz \, (n \geq 2), (\Zz/2\Zz)^2, \Zz/2\Zz \times \Zz/4\Zz, (\Zz/3\Zz)^2,(\Zz/2\Zz)^3.$

Set $G=\Zz/d_1\Zz \times \cdots \times \Zz/d_m\Zz,$ where $m \geq 1$ and $d_1, \dots,d_m$ are integers $\geq 2$ such that $d_{i}$ divides $d_{i+1}$ for each $i \in \{1,\dots,m-1\}$. As $G$ is not cyclic (condition (a)), one has $m \geq 2$. We split the proof into three cases: $m\ge 4$, $m=3$, and $m=2$. For short, we will call a non-trivial proper quotient of $G$ ``suitable" if it is not in the above list.

Assume that $m\geq 4$. Then, $\Zz/d_2\Zz \times \cdots \times \Zz/d_{m}\Zz$ is a suitable quotient of $G$, unless $m=4$ and $d_1= \cdots =d_4=2$. But this last case cannot happen by condition (e).

Assume that $m=3$. First, assume that $d_3 > d_1$ and $d_1 \not=2$. Then, $(\Zz/d_1\Zz)^3$ is a suitable quotient of $G$. Second, assume that $d_3>d_1$ and $d_1=2$. Then, $\Zz/d_2\Zz\times \Zz/d_3\Zz$ is a non-trivial proper quotient of $G$ that is suitable, unless $d_2=2$ and $d_3=4.$ But this last case cannot happen by condition (e). Third, assume that $d_1=d_2=d_3$. Then, $(\Zz/d_3\Zz)^2$ is a non-trivial proper quotient of $G$ that is suitable, unless $d_3 =2$ or $d_3=3$. But none of these two cases can happen by condition (e).

Assume that $m=2$. First, assume that $d_2>d_1\geq 4$. Then, $(\Zz/d_1\Zz)^2$ is a suitable quotient of $G$. Second, assume that $d_2=d_1 \geq 4$. Then, $d_2$ is not a prime number (condition (b)). Set $d_2=r \cdot s$, where $r$ and $s$ are integers $\geq 2$. By condition (e), one has $d_2 \geq 6$. Then, $\Zz/r\Zz \times \Zz/d_2\Zz$ is a suitable quotient of $G$. Third, assume that $d_1 \leq 3$. Then, by conditions (b), (c), and (d), $d_2/d_1$ is neither $1$ nor a prime. Set $d_2=d_1 \cdot r\cdot s$, where $r$ and $s$ are integers $\geq 2$. Then, $\Zz/d_1\Zz \times \Zz/(d_1 \cdot r)\Zz$ is a non-trivial proper quotient of $G$ that is suitable, unless $d_1=2$ and $d_1 \cdot r=4$. This last case leads to $r=2$ and, without loss, to $s=2$ as well. Hence, $G=\Zz/2\Zz \times \Zz/8\Zz$, which cannot happen by condition (e).

\subsubsection{Over $\Qq$}

Here, we assume that $G$ is any non-trivial finite abelian group as in condition (2) of Theorem \ref{thm 1.1}. If $G$ is as in condition (3) of Theorem \ref{thm 1}, then the conclusions of Theorems \ref{thm 1.1} and \ref{thm 3.1} hold with $G$. One may then assume that $G$ is one of the following groups:

\noindent
{\rm{(a)}} $\Zz/n\Zz$ ($n$ is neither a prime number nor in $\{4,6,12\}$),

\noindent
{\rm{(b)}} $(\Zz/p\Zz)^2$ ($p$ prime, $p \geq5$),

\noindent
{\rm{(c)}} $\Zz/2\Zz \times \Zz/2p\Zz$ ($p$ prime, $p \geq 5$),

\noindent
{\rm{(d)}} $\Zz/3\Zz  \times \Zz/3p\Zz$ ($p$ prime),

\noindent
{\rm{(e)}} $\Zz/2\Zz \times \Zz/8\Zz$, $(\Zz/4\Zz)^2$, $(\Zz/2\Zz)^2 \times \Zz/4\Zz$,  $(\Zz/3\Zz)^3$.

First, assume that $G$ is none of the following groups:

\noindent
{\rm{(i)}} $\Zz/2^a\Zz$ ($a \geq3$),

\noindent
{\rm{(ii)}} $\Zz/3^b\Zz$ ($b \geq2$),

\noindent
{\rm{(iii)}} $\Zz/2^a3^b \Zz$ ($a \geq 1$, $b \geq 1$, $(a,b) \not \in \{(1,1),(2,1)\}$),

\noindent
{\rm{(iv)}} $\Zz/2\Zz \times \Zz/8\Zz$, $(\Zz/4\Zz)^2$, $\Zz/3\Zz \times \Zz/6\Zz$, $\Zz/3\Zz \times \Zz/9\Zz$, $(\Zz/2\Zz)^2 \times \Zz/4\Zz$, $(\Zz/3\Zz)^3$.

\noindent
Then, there exist a prime $p \geq 5$ and a non-trivial proper subgroup $H$ of $G$ such that $G/H \cong \Zz/p\Zz$. Hence, $m_{G/H,\Qq} \geq 2$ by Proposition \ref{list genus leq 1 abelian}. Theorem \ref{genus2} and Remark \ref{odd order} then provide the desired conclusions.

Now, assume that $G$ is any group as in conditions (i)-(iv) above. Then, $G$ has a non-trivial proper subgroup $H$ such that $G/H$ is $\Zz/8\Zz$ or $\Zz/9\Zz$ or $\Zz/2\Zz \times \Zz/4\Zz$ or $ \Zz/3\Zz \times \Zz/3\Zz,$ unless $G=\Zz/8\Zz$ or $G=\Zz/9\Zz$. Hence, one has $m_{G/H,\Qq} \geq 2$; see Proposition \ref{list genus leq 1 abelian}. As above, one shows that the conclusions of Theorems \ref{thm 1.1} and \ref{thm 3.1} hold for the group $G$. 

Finally, assume that $G=\Zz/8\Zz$ (the other case for which $G=\Zz/9\Zz$ is similar). In this context, we refer to Lemma \ref{lemma genus2}. By Remark \ref{odd order}, it suffices to show that, given a $\Qq$-regular Galois extension $E/\Qq(T)$ with Galois group $\Zz/8\Zz$, the subextension $E^{\Zz/2\Zz}/\Qq(T)$ has genus at least 2. Denote the Euler function by $\varphi$. By the Branch Cycle Lemma, $E/\Qq(T)$ has at least $\varphi(8)=4$ branch points that are totally ramified. In particular, $E^{\Zz/2\Zz}/\Qq(T)$ has at least 4 branch points. Then, the genus of $E^{\Zz/2\Zz}/\Qq(T)$ is at least 2 by the Riemann-Hurwitz formula.

\subsubsection{Remaining cases}

Below, we assume that $G$ is among $\Zz/6\Zz$, $\Zz/12\Zz$, $\Zz/2\Zz \times \Zz/4\Zz$, $\Zz/2\Zz \times \Zz/6\Zz$, $(\Zz/3\Zz)^2$, $(\Zz/2\Zz)^3$, $(\Zz/2\Zz)^4$.

We need the following lemma, which is a standard consequence of the rigidity method; see, e.g., \cite[\S3.2]{Vol96}.

\begin{lemma} \label{general abelian}
Let $G_0$ be a non-trivial finite abelian group. Then, there exists a $\Qq$-regular Galois extension of $\Qq(T)$ with Galois group $G_0$ and whose inertia canonical invariant contains the conjugacy class $\{g_0\}$ for every non-zero element $g_0$ of $G_0$.
\end{lemma}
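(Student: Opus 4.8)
The plan is to write down an explicit ``rational branch cycle description'' that realizes every non-identity element as an inertia generator, and then to invoke the rigidity method (\cite[\S3.2]{Vol96}) to descend the associated cover to $\Qq$. First I would isolate the combinatorial heart of the matter: I want a tuple $(g_1,\dots,g_r)$ of non-identity elements of $G_0$ which generates $G_0$, whose product is trivial, and which contains every non-identity element. To build it, I would partition the non-identity elements of $G_0$ according to the cyclic subgroup they generate. For each cyclic subgroup $C=\langle g\rangle$ of order $e\ge 2$, the $\varphi(e)$ generators $\{g^u : u\in(\Zz/e\Zz)^*\}$ form one block. Since $\sum_{u\in(\Zz/e\Zz)^*} u\equiv 0 \pmod e$ whenever $e>2$, the product of the elements of a block of size $>1$ is trivial; hence the only surviving contribution to the global product comes from the order-$2$ elements, and it equals the product of all involutions of $G_0$. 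This product is trivial unless the $2$-torsion of $G_0$ is cyclic of order $2$, in which case it equals the unique involution $w$; in that one case I would append a further occurrence of $w$ to the tuple to restore the product-one condition. The resulting tuple generates (it contains all non-identity elements) and has trivial product.

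Next I would match this tuple with a $\Gal(\overline{\Qq}/\Qq)$-stable set of branch points so as to satisfy the Branch Cycle Lemma. For the block attached to a cyclic subgroup $C=\langle g\rangle$ of order $e$ I would take as branch points the translated primitive $e$-th roots of unity $\{c_C+\zeta_e^u : u\in(\Zz/e\Zz)^*\}$, with inertia generator $g^u$ at $c_C+\zeta_e^u$, where $c_C\in\Qq$ is chosen so that the blocks are pairwise disjoint (the finitely many coincidences $\zeta_e^u-\zeta_{e'}^{u'}\in\Qq$ are avoided by spacing the $c_C$ out, e.g. as distinct integers with pairwise distance $\ge 3$); any extra occurrence of $w$ is placed at a separate rational point. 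Since $\sigma\in\Gal(\overline{\Qq}/\Qq)$ with cyclotomic character $m$ sends $c_C+\zeta_e^u$ to $c_C+\zeta_e^{um}$ and raises the corresponding inertia class to its $m$-th power $g^{um}$, the Branch Cycle Lemma holds block by block, so the whole branch-point-and-class datum is defined over $\Qq$; the involution classes are rational, so placing them at rational points is consistent.

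Finally, since $G_0$ is abelian the conjugacy classes here are singletons, so there is exactly one tuple $(g_1,\dots,g_r)$ with $g_i$ in the prescribed classes, namely the prescribed tuple itself; equivalently the abelian cover of $\Pp^1_{\overline{\Qq}}$ with this branch locus and local monodromy is unique. This is the rigidity input. Combined with the $\Qq$-rationality of the datum established above, the rigidity criterion of \cite[\S3.2]{Vol96} yields a $\Qq$-regular Galois extension $E/\Qq(T)$ with Galois group $G_0$ whose inertia canonical invariant is the prescribed tuple, and in particular contains $\{g_0\}$ for every non-zero $g_0\in G_0$. I expect the step needing the most care to be this descent: the naive field-of-moduli-versus-field-of-definition obstruction for $G$-covers lives in $H^2$ with coefficients in the (here nontrivial) centre $Z(G_0)=G_0$, so one must use that for abelian covers with a $\Qq$-rational, Branch-Cycle-compatible branch datum this obstruction vanishes and the field of moduli is a field of definition, which is precisely what the rigidity formalism of \cite[\S3.2]{Vol96} supplies. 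The subordinate combinatorial point — keeping all non-identity elements present while forcing a trivial product — is handled by the involution-parity observation above.
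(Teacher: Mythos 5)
Your construction is correct and is precisely the argument the paper leaves implicit: the paper proves this lemma only by citing the rigidity method of \cite[\S 3.2]{Vol96}, and your ingredients --- blocks of translated primitive $e$-th roots of unity carrying the $\varphi(e)$ generators of each cyclic subgroup, the involution-parity fix to restore the product-one condition, and the observation that singleton classes make the abelian branch datum trivially rigid so that the Galois-stable, Branch-Cycle-compatible configuration descends --- are exactly the standard details behind that citation. The one point to watch is the power convention in the Branch Cycle Lemma: with the paper's normalization of distinguished inertia generators, the class attached to $c_C+\zeta_e^u$ should be $g^{u^{-1}}$ rather than $g^{u}$ (otherwise the induced Galois action on $G_0$, whose triviality is what the descent with non-trivial centre actually requires, comes out as $m\mapsto m^2$ instead of trivial), but this is a relabeling $u\mapsto u^{-1}$ within each block that changes neither the branch-point set nor the multiset of inertia classes, so your conclusion stands as written.
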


First, assume that $G=(\Zz/2\Zz)^3$ or $G=(\Zz/2\Zz)^4$. Then, $G$ contains at least 6 elements of order 2, which then generate maximal cyclic subgroups of $G$. One may then apply Theorem \ref{genus0} (with $H$ any subgroup of $G$ with order 2), as well as Remark \ref{odd order} and Lemma \ref{general abelian}, to get the conclusion of Theorem \ref{thm 2} for the group $G$.

Second, assume that $G$ is $\Zz/12\Zz$ or $\Zz/2\Zz \times \Zz/4\Zz$ or $\Zz/2\Zz \times \Zz/6\Zz$ or $(\Zz/3\Zz)^2$. By Theorem \ref{cyclic}, Remark \ref{odd order}, and Lemma \ref{general abelian}, it suffices to find a non-trivial subgroup $H$ of $G$ such that

\noindent
- $G/H$ is cyclic of order at least 3,

\noindent
- $n \geq 2+ |H|$, with $n$ the number of elements of $G$ with maximal order.

\noindent
But this last claim holds:

\noindent 
- if $G=\Zz/12\Zz$, one has $n=4$ and one can take $H=\Zz/2\Zz$,

\noindent
- if $G=\Zz/2\Zz \times \Zz/4\Zz$, one has $n=4$ and one can take $H=\Zz/2\Zz \times \{0\}$,

\noindent
- if $G=\Zz/2\Zz \times \Zz/6\Zz$, one has $n =4$ and one can take $H=\Zz/2\Zz \times \{0\}$,

\noindent
- if $G=(\Zz/3\Zz)^2$, one has $n=8$ and one can take $H=\Zz/3\Zz \times \{0\}$.

Third, assume that $G=\Zz/6\Zz$. Here, we apply Theorem \ref{cyclic2}. First, note that conditions (1), (2), and (3) are trivially satisfied. Moreover, condition (4) holds with the two conjugacy classes of the two elements of $G$ of order 6.

\subsection{Dihedral groups}

Let $n$ be a positive integer. Assume that $G$ is the dihedral group $D_n$ (with $2n$ elements).

\subsubsection{Proof of the conclusion of Theorem \ref{thm 3.1}}

First, assume that $n$ is not a prime number and $n$ has a prime factor $p \geq 11$. By \cite[Theorem 5.1]{DF94} and the Riemann-Hurwitz formula, $D_p  \cong D_n/(\Zz/(n/p)\Zz)$ satisfies $m_{D_p,\Qq} \geq 2$. It then remains to apply Theorem \ref{genus2} and Remark \ref{odd order} to get the conclusion of Theorem \ref{thm 3.1} for the group $G$ \footnote{This argument also provides the conclusion of Theorem \ref{thm 1.1} for the dihedral group $D_n$, under the extra assumption that $n$ has a prime factor $p \geq 11$.}.

\subsubsection{Proof of the conclusion of Theorem \ref{thm 1.1}}

Now, assume that $n$ is neither a prime number nor in $\{1,4,6,8,9,12\}.$ Let $p$ be the smallest prime factor of $n$. Then, $G$ has a unique normal subgroup $H$ of order $p$ and the quotient $G/H$ is isomorphic to $D_{n/p}$.

We then need the following two lemmas.

\begin{lemma} \label{lemma 2 dihedral}
Let $\mathcal{S}$ be a finite set of primes $q$ such that $q \equiv 1 \, \, {\rm{mod}} \, \, n$. Then, there exists a Galois extension $F/\Qq$ that satisfies the following two conditions:

\vspace{0.5mm}

\noindent
{\rm{(1)}} ${\rm{Gal}}(F/\Qq)=D_{n}$,

\vspace{0.5mm}

\noindent
{\rm{(2)}} the ramification index of $F/\Qq$ at $q$ is equal to $n$ for each $q \in \mathcal{S}$.
\end{lemma}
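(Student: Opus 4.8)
The plan is to realize $D_n$ as the Galois group of a tower $F \supset K \supset \Qq$, where $K=\Qq(\sqrt{-d})$ is an imaginary quadratic field and $F/K$ is cyclic of degree $n$ on which the nontrivial automorphism $\tau$ of $K/\Qq$ acts by inversion. Class field theory over $K$ will then let me prescribe the ramification at the primes of $\mathcal{S}$. The imaginary quadratic choice is what makes the group come out dihedral rather than some other extension of $\Zz/2\Zz$ by $\Zz/n\Zz$.

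First I would choose $K=\Qq(\sqrt{-d})$ with $d$ a positive squarefree integer, $d \notin \{1,3\}$ (so that the units of $\mathcal{O}_K$ are exactly $\{\pm 1\}$) and $d$ coprime to each $q \in \mathcal{S}$, in such a way that every $q \in \mathcal{S}$ splits in $K$. Splitting of an odd prime $q$ amounts to the condition that $-d$ be a nonzero square modulo $q$; since $\mathcal{S}$ is finite, these finitely many congruence conditions are simultaneously satisfiable by the Chinese Remainder Theorem together with Dirichlet's theorem on primes in arithmetic progressions. Note that each $q$ is odd and prime to $n$, since $q \equiv 1 \bmod n$ forces $q>n$. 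For each $q \in \mathcal{S}$, fix one of the two primes $\mathfrak{q}$ of $K$ above $q$; the other is $\tau(\mathfrak{q})=\overline{\mathfrak{q}}$, and the residue field at each is $\Ff_q$, whose unit group $\Ff_q^\times$ is cyclic of order $q-1$, divisible by $n$ because $q\equiv 1 \bmod n$.

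Next I would build $F/K$ using the ray class group $\mathrm{Cl}_{\mathfrak m}(K)$ of the $\tau$-stable modulus $\mathfrak m = \prod_{q \in \mathcal{S}} q\mathcal{O}_K = \prod_{q} \mathfrak{q}\,\overline{\mathfrak{q}}$. I seek a surjective homomorphism $\psi : \mathrm{Cl}_{\mathfrak m}(K) \to \Zz/n\Zz$ that is \emph{anti-invariant}, meaning $\psi\circ\tau = -\psi$. Its fixed field $F$ is then cyclic of degree $n$ over $K$, with $\tau$ acting by inversion, so $F/\Qq$ is Galois and $\Gal(F/\Qq)$ is an extension of $\Gal(K/\Qq)=\Zz/2\Zz$ by $\Zz/n\Zz$ on which the quotient acts by inversion. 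Crucially, because $K$ is imaginary quadratic, complex conjugation is an automorphism of $F$ of order $2$ restricting to $\tau\neq \mathrm{id}$ on $K$; it therefore splits the extension, and $\Gal(F/\Qq)\cong D_n$, giving condition (1). For the ramification, recall that the (tame) inertia at $\mathfrak{q}$ is the image of the local unit quotient $\Ff_q^\times$ in $\mathrm{Cl}_{\mathfrak m}(K)$; the pair $(\mathfrak{q},\overline{\mathfrak{q}})$ contributes $\Ff_q^\times \times \Ff_q^\times$ with $\tau$ swapping the factors, and its anti-invariant part is the anti-diagonal $\{(x,x^{-1})\}\cong \Ff_q^\times$ of order $q-1$. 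I would arrange $\psi$ so that this anti-diagonal maps \emph{onto} $\Zz/n\Zz$ for every $q\in\mathcal{S}$; then the image of inertia at $\mathfrak{q}$ is all of $\Gal(F/K)$, so the ramification index of $F/K$ at $\mathfrak{q}$ is exactly $n$ (it is tame, since $q\nmid n$, and bounded above by $[F:K]=n$). As $q$ splits in $K$, the ramification index of $F/\Qq$ at $q$ equals $n$ as well, giving condition (2).

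The main obstacle is this last step: exhibiting one anti-invariant surjection $\psi$ that restricts surjectively onto $\Zz/n\Zz$ on the anti-diagonal local factor at \emph{every} $q\in\mathcal{S}$ at once. The relevant anti-invariant homomorphisms are governed by an exact sequence whose local term is $\prod_{q\in\mathcal{S}} (\Ff_q^\times)_{\mathrm{anti\text{-}diag}}$, the only relations coming from the global units $\{\pm1\}$ (whose image has order at most $2$) and from the anti-invariant part of $\mathrm{Cl}(K)$. Since each local factor has order $q-1$ divisible by $n$, each admits a surjection onto $\Zz/n\Zz$, and the small, explicit unit relation can be absorbed by adjusting the chosen local projections so that the image of $-1$ is trivial; if the class group interferes, enlarging $\mathfrak m$ by an auxiliary prime split in $K$ removes the interference. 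I expect verifying this simultaneous compatibility — rather than the dihedral structure, which is forced by the imaginary quadratic choice — to be where the genuine work lies.
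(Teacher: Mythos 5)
Your route is genuinely different from the paper's, and much heavier. The paper disposes of this lemma in two lines: since $q \equiv 1 \bmod n$, the field $\Qq_q$ contains the $n$-th roots of unity, so $\Qq_q(\sqrt[n]{q})/\Qq_q$ is a totally (tamely) ramified cyclic extension of degree $n$; it then invokes \cite[Theorem 1.1]{DLN17}, a black-box realization result producing a Galois extension $F/\Qq$ with group $D_n$ whose completion at each $q \in \mathcal{S}$ \emph{is} the prescribed local extension. Your proposal instead reconstructs the classical class-field-theoretic theory of dihedral extensions: an imaginary quadratic $K=\Qq(\sqrt{-d})$ in which every $q \in \mathcal{S}$ splits, an anti-invariant surjection $\psi$ of a ray class group onto $\Zz/n\Zz$, and the observation that complex conjugation splits the resulting extension of $\Zz/2\Zz$ by $\Zz/n\Zz$, forcing the group to be $D_n$. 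That skeleton is sound: the splitting argument via complex conjugation is correct, the choice of $K$ by CRT and Dirichlet is fine, and the passage from ramification index $n$ at $\mathfrak{q}$ in $F/K$ to ramification index $n$ at the split prime $q$ in $F/\Qq$ is correct.

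However, the decisive step --- producing one anti-invariant $\psi$ that is surjective on the inertia contribution at \emph{every} $q \in \mathcal{S}$ simultaneously --- is asserted rather than proved, and you say so yourself; as written this is a genuine gap, and it is not a routine one when $n$ is even (which occurs in the paper's application, e.g.\ $n=10$). Two concrete problems. First, a conflation: anti-invariant homomorphisms factor through the quotient $A_-=A/(1+\tau)A$, and in $A_-$ the block $\Ff_q^\times\times\Ff_q^\times$ contributes the quotient $(x,y)\mapsto xy^{-1}\in\Ff_q^\times$ of order $q-1$, whereas the anti-diagonal \emph{subgroup} $\{(x,x^{-1})\}$ you work with maps only onto the squares, of index $2$; for even $n$ this factor of $2$ matters. (On the other hand your worry about the unit $-1$ is vacuous: $(-1,-1)=(1+\tau)(-1,1)$ dies in $A_-$, so no absorption is needed.) Second, the kernel of the map from the local minus part into $A_-$ is a $2$-torsion subquotient coming from $H_1(\langle\tau\rangle,\mathrm{Cl}(K)^-)$, and when $v_2(q-1)=v_2(n)$ the value $\psi_q(-1)=n/2$ is \emph{forced} for every surjection $\psi_q:\Ff_q^\times\to\Zz/n\Zz$, so ``adjusting the chosen local projections'' cannot by itself satisfy the resulting relations; your fallback of enlarging $\mathfrak{m}$ by an auxiliary split prime is the right kind of fix but is not verified, and you would additionally need an argument that a character defined on the image of the local part extends to all of $A_-$ with values in $\Zz/n\Zz$ (extension into $\Qq/\Zz$ enlarges the image and changes the degree). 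None of this is fatal --- the construction is classical and can be completed --- but closing it requires real work that the paper's citation of \cite{DLN17} sidesteps entirely.
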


\begin{proof}
Given a prime number $q \in \mathcal{S}$, let $F_q/\Qq_q$ be a totally ramified cyclic extension of degree $n$ \footnote{For example, one can take $F_q/\Qq_q= \Qq_q(\sqrt[n]{q})/\Qq_q$ (as $q \equiv 1 \, \, {\rm{mod}} \, \, n$).}. Then, by \cite[Theorem 1.1]{DLN17}, there exists a Galois extension $F/\Qq$ such that ${\rm{Gal}}(F/\Qq)=D_{n}$ and the completion of $F/\Qq$ at $q$ is equal to $F_q/\Qq_q$ for each $q \in \mathcal{S}$. Hence, the lemma holds.
\end{proof}

\begin{lemma} \label{lemma 1 dihedral}
One has $m_{D_{n/p},\Qq} \geq 1$.
\end{lemma}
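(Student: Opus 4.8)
The plan is to argue by contradiction: I would assume $m_{D_{n/p},\Qq}=0$, that is, that there exists a $\Qq$-regular Galois extension $E/\Qq(T)$ with Galois group $D_{m}$ (writing $m=n/p$) and genus $0$, and derive a contradiction from the arithmetic restrictions on $n$. The first, purely elementary, step is to record that the hypotheses force $m\notin\{1,2,3,4,6\}$. Indeed, since $p$ is the \emph{smallest} prime factor of $n$, each equality $n/p\in\{1,2,3,4,6\}$ pins $n$ down to an excluded value: $n/p=1$ makes $n=p$ prime; $n/p=2$ forces $2\mid n$, hence $p=2$ and $n=4$; $n/p=3$ gives $n\in\{6,9\}$; $n/p=4$ gives $n=8$; and $n/p=6$ gives $n=12$. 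All of these are excluded because $n$ is not prime and $n\notin\{1,4,6,8,9,12\}$. In particular $m\ge 5$, and $(\Zz/m\Zz)^\times\neq\{\pm1\}$.

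Next I would pin down the ramification type of a genus-$0$ realization. For the Galois cover $E/\Qq(T)$ of degree $2m$ with branch points $t_1,\dots,t_r$ and ramification indices $e_1,\dots,e_r$, the Riemann--Hurwitz formula in genus $0$ reads $\sum_{i=1}^r(1-1/e_i)=2-1/m$. I would then feed in the group structure of $D_m$: every element is either a rotation (order dividing $m$) or a reflection (order $2$), the inertia generators multiply to $1$ and generate $D_m$, so via the sign homomorphism $D_m\to\{\pm1\}$ the number $a$ of reflection-type generators is even, and $a\ge 2$ since the rotations alone only generate the cyclic subgroup $C_m\subsetneq D_m$. A short case analysis now finishes this step: each inertia generator contributes at least $1/2$ to the sum $2-1/m<2$, so $a/2<2$ forces $a=2$; the remaining rotation terms must then sum to $1-1/m$, which, each being at least $1/2$, forces a single rotation of order exactly $m$. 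Thus the inertia canonical invariant is of type $(2,2,m)$, with a unique branch point $t^\ast$ of ramification index $m$. (Alternatively this can be read off the genus-$0$ classification, Proposition~\ref{list genus 0}.)

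The final step turns this structure against itself via the Branch Cycle Lemma. Since $m\ge 3$, the branch point $t^\ast$ is the \emph{only} one of ramification index $m$; as the action of $\Gal(\overline{\Qq}/\Qq)$ on the branch points preserves ramification indices, $t^\ast$ is fixed, hence $t^\ast\in\Pp^1(\Qq)$. By the Branch Cycle Lemma (see \cite{Fri77} and \cite[Lemma~2.8]{Vol96}), the inertia class $C^\ast$ at $t^\ast$ then satisfies $C^\ast=(C^\ast)^c$ for every $c$ coprime to $m$. But $C^\ast$ is the class of an order-$m$ rotation $\rho$, and in $D_m$ one has $\rho^c\sim\rho$ if and only if $c\equiv\pm1\pmod m$; hence $(\Zz/m\Zz)^\times=\{\pm1\}$, i.e.\ $m\in\{1,2,3,4,6\}$, contradicting the first step. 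This contradiction yields $m_{D_{n/p},\Qq}\ge 1$.

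I would expect the ramification-type computation — showing the type must be $(2,2,m)$ — to be the main technical obstacle, chiefly because of the bookkeeping for $D_m$ when $m$ is even: there one must also account for the central involution $\rho^{m/2}$, which is a rotation of order $2$ rather than a reflection, and for the parity constraint forcing an even number of reflections among the inertia generators. If Proposition~\ref{list genus 0} already records the genus-$0$ dihedral realizations explicitly, this step collapses to a citation; otherwise it is the one place where genuine care is required, the remaining steps (the elementary case check on $n/p$ and the Branch Cycle Lemma computation) being routine.
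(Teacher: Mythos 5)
Your proposal is correct and takes essentially the same route as the paper: the paper's proof is the one-liner that $m_{D_{n/p},\Qq}=0$ would force $n/p\in\{1,2,3,4,6\}$ by Proposition~\ref{list genus 0}, i.e.\ $n$ prime or $n\in\{4,6,8,9,12\}$, contradicting the hypotheses --- exactly your elementary case check. Your Riemann--Hurwitz computation forcing type $(2,2,m)$ and the Branch Cycle Lemma rationality argument forcing $(\Zz/m\Zz)^\times=\{\pm1\}$ are precisely the (omitted) content behind Proposition~\ref{list genus 0}(2) for dihedral groups, so you have simply unpacked the citation, correctly.
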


\begin{proof}
If $m_{D_{n/p},\Qq} =0$, then $n/p \in \{1,2,3,4,6\}$ by Proposition \ref{list genus 0}, i.e., $n$ is a prime number or $n \in \{4, 6,8,9,12\}$, which cannot happen.
\end{proof}

Given a prime $q$ such that $q \equiv 1 \, \, {\rm{mod}} \, \, n$, let $F/\Qq$ be a Galois extension with Galois group $D_n$ and ramification index $n$ at $q$ (Lemma \ref{lemma 2 dihedral}). Consider the subextension $F^H/\Qq$ (that has Galois group $D_{n/p}$). Since the ramification index of $q$ in $F/\Qq$ is equal to $n$, that in $F^H/\Qq$ is at least $n/p$ and, as $n$ is neither a prime number nor equal to 4, one has $n/p \geq 3$. Moreover, as $H$ is the unique normal subgroup of $D_n$ with order $p$, the extension $F^H/\Qq$ embeds into infinitely many Galois extensions of $\Qq$ with Galois group $D_n$; see Proposition \ref{solvable kernel}. It then remains to use Theorem \ref{genus1} and Lemma \ref{lemma 1 dihedral} to conclude.

\subsection{Symmetric groups over $k$}

Assume that $G=S_n$ for some $n \geq 8$. Below, we show that $G$ has no parametric extension over $k$. To do this, we apply Theorem \ref{genus0} (with $H=A_n$).  Recall that a permutation $\sigma \in S_n$ has {\it{type $1^{l_1} \dots n^{l_n}$}} if, for each index $i \in \{1,\dots, n\}$, there are $l_i$ disjoint cycles of length $i$ in the cycle decomposition of $\sigma$ (for example, an $n$-cycle has type $n^1$). Denote the conjugacy class in $S_n$ of elements of type $1^{l_1} \dots n^{l_n}$ by $[1^{l_1} \dots n^{l_n}]$.

\subsubsection{Checking condition {\rm{(1)}}}

Let $E/\Qq(T)$ be a $\Qq$-regular Galois extension with group $S_n$ and inertia canonical invariant $([1^{n-2}2^1],$  $[1^1(n-1)^1], [n^1])$ (such an extension exists; see, e.g., \cite[\S8.3.1]{Ser92}). By the Branch Cycle Lemma, the associated branch points all are $\Qq$-rational and this inertia canonical invariant contains exactly two conjugacy classes of elements of $S_n$ with even order. Moreover, by the Riemann-Hurwitz formula, the genus of $E/\Qq(T)$ is at least 2 (as $n \geq 5$). We may then apply Proposition \ref{Debes 92} to get condition (1) of Theorem \ref{genus0}.

\subsubsection{Checking condition {\rm{(2)}}}

First, we need Lemma \ref{symmetric} below, which relates to part (d) of condition (2) of Theorem \ref{genus0}.

\begin{lemma} \label{symmetric}
Every conjugacy class of $S_n$ which is not contained in $A_n$ belongs to the inertia canonical invariant of some $\Qq$-regular Galois extension of $\Qq(T)$ with Galois group $S_n$.
\end{lemma}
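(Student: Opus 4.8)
The plan is to realize $C$ as the inertia canonical class at a single $\Qq$-rational branch point of the Galois closure of an explicit degree-$n$ cover $\mathbb{P}^1_\Qq \to \mathbb{P}^1_\Qq$. Write $C = [1^{l_1} \cdots n^{l_n}]$ and let $\lambda = (d_1, \dots, d_s)$ be the corresponding cycle type, so $\sum_{i=1}^s d_i = n$. The key observation is that, for a degree-$n$ cover whose monodromy group (that is, the Galois group of its Galois closure) is all of $S_n$, the inertia canonical class over a branch point $t_0$ is the class of a permutation whose cycle type equals the ramification profile of the fibre over $t_0$. Indeed, every conjugacy class of $S_n$ is rational: a permutation and any coprime-order power of it share the same cycle type and are thus conjugate, so the choice of root of unity in the definition of the inertia canonical class is immaterial, and this class is exactly the class of that cycle type.

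Assuming $s \geq 2$, I would set $T = f(Y)$ with $f(Y) = \prod_{i=1}^s (Y - a_i)^{d_i} \in \Qq[Y]$ for distinct $a_i \in \Qq$, so that the fibre of $f$ over $0$ has profile $\lambda$. Let $E/\Qq(T)$ be the Galois closure of $\Qq(Y)/\Qq(T)$, with Galois group the arithmetic monodromy group $M^{\mathrm{ar}} \subseteq S_n$, and let $M \subseteq S_n$ be the geometric monodromy group. Since $f(Y) - T$ is irreducible over $\overline{\Qq}(T)$, the group $M$ is transitive; as $f$ is a polynomial, the fibre over $\infty$ is totally ramified, so $M$ contains an $n$-cycle. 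The remaining finite critical points are the roots of $g(Y) = \sum_{i=1}^s d_i \prod_{j \neq i}(Y - a_j)$ (of degree $s-1$), and for generic $a_i$ they are simple with pairwise distinct critical values, contributing transpositions to $M$. (If $s = 1$, i.e. $C$ is the class of $n$-cycles, one instead takes any $\Qq$-regular $S_n$-extension coming from a degree-$n$ polynomial and reads off $C$ as the inertia at $\infty$.)

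Granting that the $a_i \in \Qq$ can be chosen so that $M = S_n$, the argument concludes formally: then $S_n = M \subseteq M^{\mathrm{ar}} \subseteq S_n$ forces $M^{\mathrm{ar}} = M = S_n$, whence $E/\Qq$ is regular with $\Gal(E/\Qq(T)) = S_n$, and by the first paragraph $C$ is the inertia canonical class at $t_0 = 0$, so $C$ belongs to the inertia canonical invariant of $E/\Qq(T)$. To produce such $a_i$, note that ``the extra critical points are simple with distinct critical values'' and ``$M = S_n$'' are Zariski-open conditions on the parameter space $\{(a_1, \dots, a_s) : a_i \text{ pairwise distinct}\} \subseteq \mathbb{A}^s_\Qq$, which is $\Qq$-rational with dense $\Qq$-points; hence it suffices to check that these conditions are satisfiable at all, which may be done over $\Cc$.

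The main obstacle is precisely showing that $M = S_n$ for a generic choice of the $a_i$. Since $M$ is transitive and contains a transposition, by Jordan's theorem it is enough to prove that $M$ is primitive. I would deduce primitivity from the presence of the $n$-cycle: a transitive subgroup of $S_n$ containing an $n$-cycle admits only the congruence block systems $\{i \equiv j \pmod d\}$ with $d \mid n$, and for generic positions of the $a_i$ at least one of the transpositions above joins two points lying in distinct blocks of every such system, ruling out imprimitivity. (Alternatively, this is an instance of the classical fact that a generic degree-$n$ cover with one prescribed degenerate fibre has full symmetric geometric monodromy.) Everything else in the argument is formal; this primitivity-cum-genericity verification is where the real content lies.
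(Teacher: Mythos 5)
Your construction breaks down irreparably whenever the parts of the cycle type share a common factor, and such classes are genuinely covered by the lemma. If $d:=\gcd(d_1,\dots,d_s)>1$, then $f(Y)=\prod_{i=1}^s(Y-a_i)^{d_i}=h(Y)^d$ with $h(Y)=\prod_{i=1}^s(Y-a_i)^{d_i/d}$, so the cover $T=f(Y)$ factors through $Z=h(Y)$, $T=Z^d$, and the geometric monodromy is imprimitive (it preserves the partition of the fibre into the $d$ sets $h^{-1}(\zeta)$, $\zeta$ ranging over the $d$-th roots of $T$) for \emph{every} choice of the $a_i$ --- no genericity can rescue this, so $M=S_n$ is simply unattainable in your family. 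Concretely, take all $d_i=2$ and $s$ odd: the sign of $C$ is $(-1)^{n-s}=-1$, so $C=[2^3]$ in $S_6$, or $[4^12^2]$ in $S_8$, is \emph{not} contained in $A_n$ and must be handled by the lemma, yet your $f=h^2$ is decomposable. One can see your crossing-transposition claim fail explicitly there: with $f=h^2$, the extra critical points are the roots $y_0$ of $h'$, and the two sheets colliding at $y_0$ both have $h$-value near $h(y_0)$, hence lie over the same square root of $T$, i.e.\ in the same block --- so \emph{all} your auxiliary transpositions are intra-block, for all positions of the $a_i$. The parenthetical ``classical fact'' that a generic cover with one prescribed degenerate fibre has full symmetric monodromy is false in exactly this situation. (For $\gcd(d_1,\dots,d_s)=1$ your plan is plausible, though the primitivity-for-generic-$a_i$ step you defer would still need an actual proof.)

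The paper avoids this trap by not computing monodromy at all. It takes $H(Y)=(Y-1)^{e_1}\cdots(Y-s)^{e_s}$ (your $f$, essentially) but forms $F(Y)=G(Y)-TH(Y)$ where $G(Y)=Y^n+T_1Y^{n-1}+\cdots+T_n$ has \emph{independent generic coefficients}; equivalently, the cover is $T=\overline{G}(Y)/H(Y)$, which prescribes the fibre over $T=\infty$ (the roots of $H$ with multiplicities $e_i$) while leaving everything else generic. The Galois group over $\Qq(T_1,\dots,T_n,T)$ is $S_n$ because specializing $T=0$ recovers the generic polynomial, Hilbert's irreducibility theorem then produces rational $(t_1,\dots,t_n)$ keeping the group $S_n$ over $\Qq(T)$, and \cite[Lemma 3.1]{Mue02} identifies the inertia class at $\infty$ as $C$; regularity comes for free since the geometric Galois group is normal in $S_n$ and contains the odd permutations of $C$ --- the one place the hypothesis $C\not\subset A_n$ enters, which your argument, tellingly, never uses. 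In effect the generic numerator $\overline{G}$ destroys the decomposition $f=h^d$ that kills your family, and HIT replaces your monodromy/primitivity analysis. To repair your proof you would have to pass from polynomial covers to rational functions with a generic half, at which point you have rediscovered the paper's argument.
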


\begin{proof}
Let $C$ be a conjugacy class of $S_n$ which is not contained in $A_n$. Denote the type of all the elements of $C$ by $e_1^1 \dots e_s^1$. Let $T_1,\dots,T_n,T, Y$ be indeterminates that are algebraically independent over $\Qq$. Set
$$H(Y)=(Y-1)^{e_1} \cdots (Y-s)^{e_s} \in \Qq[Y],$$
$$G(Y)=Y^n + T_1 Y^{n-1} + \cdots + T_{n-1} Y + T_n \in \Qq(T_1,\dots,T_n)[Y]$$
and
$$F(Y)=G(Y) - T H(Y)  \in \Qq(T_1,\dots,T_n,T)[Y].$$
Clearly, $F$ has group $S_n$ over $\Qq(T_1,\dots,T_n,T)$. By Hilbert's irreducibility theorem, there are infinitely many $(t_1,\dots,t_n) \in \Qq^n$ such that
$$\overline{F}(Y):=Y^n + t_1 Y^{n-1} + \cdots + t_{n-1} Y + t_n - TH(Y) \in \Qq(T)[Y]$$
has Galois group $S_n$ over $\Qq(T)$. Pick such a tuple $(t_1,\dots,t_n)$ and denote the splitting field of $\overline{F}(Y)$ over $\Qq(T)$ by $E$. By \cite[Lemma 3.1]{Mue02}, $\infty$ is a branch point of $E/\Qq(T)$ and the associated inertia canonical conjugacy class is $C$. Then, the Galois group of $E\overline{\Qq}/\overline{\Qq}(T)$, which is a normal subgroup of $S_n$, contains an element of $S_n \setminus A_n$. Hence, ${\rm{Gal}}(E\overline{\Qq}/\overline{\Qq}(T))=S_n$, i.e., $E/\Qq$ is regular, thus ending the proof.
\end{proof}

Now, we prove condition {\rm{(2)}} of Theorem \ref{genus0}. By Lemma \ref{symmetric} and as each conjugacy class of $S_n$ is rational\footnote{Recall that a conjugacy class $C$ of a given finite group $G$ is {\it{rational}} if $C^i=C$ for each integer $i$ which is coprime to the order of the elements of $C$.}, it suffices to find 5 distinct conjugacy classes $C_1, \dots, C_5$ of $S_n$ such that $C_i \not \subset A_n$ and each element of $C_i$ generates a maximal cyclic subgroup of $S_n$ ($i \in \{1,\dots,5\}$). First, assume that $n \geq 11$. Then, for each integer $1 \leq m \leq n-1$ (resp., $1 \leq m \leq n-2$), the conjugacy class $[m^1(n-m)^1]$ (resp., $[1^1m^1(n-m-1)^1]$) has the desired properties if $n$ is odd (resp., if $n$ is even) and one has $(n-1)/2 \geq 5$ (resp., $(n-2)/2 \geq 5$) such conjugacy classes. In the remaining cases, these conjugacy classes have the desired properties:

\noindent
- $[10^1], [1^2 8^1], [1^1 2^1 7^1], [1^1 3^1 6^1], [1^1 4^1 5^1]$ (if $n=10$),

\noindent
- $[1^1 8^1], [2^1 7^1], [3^1 6^1], [4^1 5^1], [1^1 2^1 3^2]$ (if $n=9$),

\noindent
- $[8^1], [1^2 6^1], [1^1 2^1 5^1], [1^1 3^1 4^1], [2^13^2]$ (if $n=8$).

\vspace{-0.2mm}

\subsection{Symmetric groups over $\Qq$}

Assume that $G=S_n$ for some $n \geq 6$. Below, we show that $G$ has no parametric extension over $\Qq$. By \S6.5, one may and will assume that $n \in \{6,7\}$. In this case, we apply Theorem \ref{cyclic2}. First, note that condition (1) clearly holds, condition (2) holds by Proposition \ref{GAR 2}, and condition (3) holds as well (as explained in \S6.5.1). By Lemma \ref{symmetric}, it remains to find 3 distinct conjugacy classes $C_1,C_2,C_3$ of $S_n$ such that $C_i$ is not contained in $A_n$ and each element of $C_i$ generates a maximal cyclic subgroup of $S_n$ ($i \in \{1,2,3\}$). For $n=6$, one can take $\{C_1,C_2,C_3\} = \{[6^1], [1^2 4^1], [1^12^13^1]\}$. For $n=7$, one can take $\{C_1,C_2,C_3\}=\{[1^16^1], [2^1 5^1], [3^1 4^1]\}$, thus ending the proof.

\subsection{Linear groups (and more general groups)}

Here, we assume that the center $Z(G)$ of $G$ is not trivial and the quotient $G/Z(G)$ is neither solvable nor $A_5$. Below, we show that the conclusions of Theorems \ref{thm 1} and \ref{thm 3} hold with $G$. Without loss, we may and will assume that  $G$ is a regular Galois group over $k$. By Proposition \ref{solvable kernel}, there exists a Galois extension of $k$ with Galois group $G/Z(G)$ which embeds into infinitely many Galois extensions of $k$ with group $G$. Moreover, one has $m_{G/Z(G),k} \geq 2$ by Proposition \ref{list genus leq 1}. It then remains to apply Theorem \ref{genus2} to conclude. The same argument provides the conclusions of Theorems \ref{thm 1.1} and \ref{thm 3.1} for the group $G$ if $Z(G)$ is not trivial and $G/Z(G)$ is neither solvable of even order nor of order $\leq 3$.

In Proposition \ref{linear groups} below, we give explicit examples of finite groups $G$ as above.

\begin{proposition} \label{linear groups}
Let $n \geq 2$ be an integer and $q \geq 3$ a prime power. Set $G={\rm{GL}}_n(\mathbb{F}_q)$. Then, the following three conclusions hold.

\vspace{0.5mm}

\noindent
{\rm{(1)}} The center $Z(G)$ of $G$ is not trivial.

\vspace{0.5mm}

\noindent
{\rm{(2)}} Assume that $(n,q) \not \in \{(2,3), (2,4)\}$. Then, $G/Z(G)$ is neither solvable nor $A_5$.

\vspace{0.5mm}

\noindent
{\rm{(3)}} Assume that $(n,q)  \not=(2,3)$. Then, $G/Z(G)$ is neither solvable of even order nor of order $\leq 3$.
\end{proposition}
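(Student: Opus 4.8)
The plan is to identify $Z(G)$ explicitly and then to recognize the quotient $G/Z(G)$ as the projective linear group ${\rm{PGL}}_n(\mathbb{F}_q)$, whose structure is controlled by the (almost always) non-abelian simple group ${\rm{PSL}}_n(\mathbb{F}_q)$. For conclusion (1), I would note that the center of ${\rm{GL}}_n(\mathbb{F}_q)$ consists exactly of the scalar matrices $\lambda I_n$ with $\lambda \in \mathbb{F}_q^\times$, so that $Z(G) \cong \mathbb{F}_q^\times$ has order $q-1$. As $q \geq 3$, one has $q-1 \geq 2$, hence $Z(G)$ is non-trivial.

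For conclusions (2) and (3), the starting point is that by definition $G/Z(G) = {\rm{PGL}}_n(\mathbb{F}_q)$, which contains ${\rm{PSL}}_n(\mathbb{F}_q)$ as a normal subgroup. I would then invoke the classical fact that ${\rm{PSL}}_n(\mathbb{F}_q)$ is a non-abelian simple group for every $n \geq 2$ and every prime power $q$, with the sole exceptions $(n,q) \in \{(2,2),(2,3)\}$. Since $q \geq 3$ and $(n,q) \neq (2,3)$, the subgroup ${\rm{PSL}}_n(\mathbb{F}_q)$ is non-abelian simple; as a group possessing a non-abelian simple subgroup is never solvable, it follows that ${\rm{PGL}}_n(\mathbb{F}_q)$ is not solvable. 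This single observation yields at once the ``not solvable'' half of (2) and the ``not solvable of even order'' half of (3).

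It then remains to rule out the small exceptional isomorphism types, and here I would use the explicit order formula $|{\rm{PGL}}_n(\mathbb{F}_q)| = (q-1)^{-1}\prod_{i=0}^{n-1}(q^n-q^i)$. For (3) one only needs the order to exceed $3$: for $n=2$ this equals $q^3-q \geq 60$ (recall $q \geq 4$ once $(2,3)$ is excluded), and for $n \geq 3$ it is visibly larger, so the order is always at least $60$. For (2) one must additionally exclude $G/Z(G) \cong A_5$; since $A_5$ has order $60$, it suffices to show that $|{\rm{PGL}}_n(\mathbb{F}_q)| = 60$ forces $(n,q)=(2,4)$. Indeed, for $n=2$ the equation $q^3 - q = 60$ has $q=4$ as its only prime-power solution, while for $n \geq 3$ the order already exceeds $60$. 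Excluding $(2,4)$ therefore removes the only pair for which the order could match, so ${\rm{PGL}}_n(\mathbb{F}_q) \not\cong A_5$, which completes (2). (Consistently, for the case $(2,4)$ allowed in (3) one has ${\rm{PGL}}_2(\mathbb{F}_4) = {\rm{PSL}}_2(\mathbb{F}_4) \cong A_5$, which is non-solvable of order $60$, as required.)

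The main, and quite mild, obstacle is the bookkeeping of the exceptional small cases: one must quote the precise list of non-simple ${\rm{PSL}}_n(\mathbb{F}_q)$ and verify that $(2,4)$ is genuinely the unique pair outside $(2,3)$ with $|{\rm{PGL}}_n(\mathbb{F}_q)| \leq 60$. The order formula makes both checks decisive, so no delicate structural argument beyond the simplicity of ${\rm{PSL}}_n(\mathbb{F}_q)$ is needed.
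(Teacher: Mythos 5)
Your proof is correct and takes essentially the same approach as the paper's: both identify $G/Z(G)$ with ${\rm{PGL}}_n(\mathbb{F}_q)$, derive non-solvability from the simplicity of ${\rm{PSL}}_n(\mathbb{F}_q)$ outside the exceptional pairs $(n,q)\in\{(2,2),(2,3)\}$, and exclude $A_5$ in conclusion (2) by verifying $|{\rm{PGL}}_n(\mathbb{F}_q)|\neq 60$ once $(2,4)$ is removed. You merely spell out the order computation (via $|{\rm{PGL}}_2(\mathbb{F}_q)|=q^3-q$) that the paper dismisses as ``easily checked.''
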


\begin{proof}
Conclusion (1) follows from the assumption $q \geq 3$. For conclusion (2), assume that $(n,q) \not \in \{(2,3), (2,4)\}$. If $G/Z(G)={\rm{PGL}}_n(\mathbb{F}_q$) was solvable, then this would be also true for ${\rm{PSL}}_n(\mathbb{F}_q)$. By, e.g., \cite[Theorem 9.46]{Rot95}, one would get $(n,q) \in \{(2,2), (2,3)\}$, which cannot happen. Moreover, it is easily checked that $|{\rm{PGL}}_n(\mathbb{F}_q)| \not= 60$ (as $(n,q) \not=(2,4)$). Hence, ${\rm{PGL}}_n(\mathbb{F}_q) \not \cong A_5$ and conclusion (2) holds. Now, assume that $(n,q) \not= (2,3)$. Then, by the above, $G/Z(G)$ is not solvable. In particular, conclusion (3) holds.
\end{proof}

\section{Appendix A: groups of minimal genus 0 or 1}

For this section, let $k$ be a number field, $G$ a non-trivial finite group, and $E/k(T)$ a $k$-regular Galois extension with Galois group $G$ and inertia canonical invariant $(C_1,\dots,C_r)$. For each $i \in \{1,\dots,r\}$, let $e_i$ be the order of the elements of $C_i$. The unordered $r$-tuple $(e_1,\dots,e_r)$ is called the {\it{ramification type of $E/k(T)$}}.

Below, we collect some well-known conclusions on the group $G$, the number of branch points $r$, and the ramification type $(e_1,\dots,e_r)$, under the assumption that the genus $g$ of $E/k(T)$ is either 0 or 1.

First, we consider the cases $g=0$ and $g=1$ separately. Propositions 7.1-2  below are essentially applications of the Riemann-Hurwitz formula and the Branch Cycle Lemma. See also, e.g., \cite[Chapter I, Theorem 6.2]{MM99}.

\begin{proposition} \label{list genus 0}
Assume that $g=0$. Then, the following two conclusions hold.

\vspace{0.5mm}

\noindent
{\rm{(1)}} One of the following conditions holds:

\vspace{0.5mm}

{\rm{(a)}} $G=\Zz/n\Zz$ for some $n \geq 2$, $r=2$, and $(e_1,e_2)=(n,n)$,

\vspace{0.5mm}

{\rm{(b)}} $G=D_n$ for some $n \geq 2$, $r=3$, and $(e_1,e_2,e_3)=(2,2,n)$,

\vspace{0.5mm}

{\rm{(c)}} $G=A_4$, $r=3$, and $(e_1,e_2,e_3)=(2,3,3)$,

\vspace{0.5mm}

{\rm{(d)}} $G=S_4$, $r=3$, and $(e_1,e_2,e_3)=(2,3,4)$,

\vspace{0.5mm}

{\rm{(e)}} $G=A_5$, $r=3$, and $(e_1,e_2,e_3) = (2,3,5)$.

\vspace{0.5mm}

\noindent
{\rm{(2)}} Assume that $k=\Qq$. Then, cases {\rm{(a)}} and {\rm{(b)}} cannot occur if $n  \not \in \{2,3,4,6\}$, and case {\rm{(e)}} does not occur either.
\end{proposition}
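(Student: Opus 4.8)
\emph{Part (1).} The plan is to pass to the geometric situation and apply Riemann--Hurwitz. Since $E/k$ is regular, the cover $E\overline{\Qq}/\overline{\Qq}(T)$ again has genus $0$ and Galois group $G$. Over a branch point $t_i$ there are $|G|/e_i$ points, each of ramification index $e_i$, so Riemann--Hurwitz (with source and target genus $0$) reads, after division by $|G|$,
\begin{equation*}
\sum_{i=1}^{r}\Bigl(1-\frac{1}{e_i}\Bigr)=2-\frac{2}{|G|}.
\end{equation*}
As every summand is $\ge 1/2$ and the right-hand side is $<2$, this forces $r\le 3$; and $r\ge 2$ since the product-one relation $g_1\cdots g_r=1$ on the inertia generators (which also generate $G$) has no solution for $r\le 1$ with $G$ nontrivial. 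For $r=2$ the relation gives $g_2=g_1^{-1}$, whence $G=\langle g_1\rangle$ is cyclic with $e_1=e_2=|G|=:n$: this is case (a). For $r=3$, ordering $e_1\le e_2\le e_3$, the displayed identity becomes $1/e_1+1/e_2+1/e_3=1+2/|G|>1$, whose only integer solutions with each $e_i\ge 2$ are $(2,2,n)$, $(2,3,3)$, $(2,3,4)$, $(2,3,5)$; the same identity then fixes $|G|$ to be $2n$, $12$, $24$, $60$ respectively.

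It remains to identify $G$ up to isomorphism, and this is the step I expect to be the real obstacle. Here I would invoke the classical classification of finite subgroups of $\Aut(\Pp^1_{\overline{\Qq}})=\mathrm{PGL}_2(\overline{\Qq})$: since $g=0$ the upper curve is $\Pp^1_{\overline{\Qq}}$ and $G$ embeds into $\mathrm{PGL}_2(\overline{\Qq})$, whose finite subgroups are exactly the cyclic, dihedral, tetrahedral ($A_4$), octahedral ($S_4$) and icosahedral ($A_5$) groups. Matching each order and ramification triple against this list yields cases (b)--(e); equivalently, one checks that the spherical triangle group $\Delta(e_1,e_2,e_3)$ admits a unique finite quotient carrying the prescribed inertia generators, namely the asserted polyhedral group.

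\emph{Part (2).} Now take $k=\Qq$ and let $\chi$ be the cyclotomic character of $G_\Qq=\Gal(\overline{\Qq}/\Qq)$, characterized by $\sigma(\zeta_m)=\zeta_m^{\chi(\sigma)}$. The engine is the Branch Cycle Lemma: for $\sigma\in G_\Qq$ the branch point $\sigma(t_i)$ carries inertia class $C_i^{\chi(\sigma)}$, so $\sigma$ permutes the $t_i$ while preserving the orders $e_i$. In case (a) one has $C_2=C_1^{-1}$ and $e_1=e_2=n$. If $t_1,t_2$ are both $\Qq$-rational, the lemma at $t_1$ gives $C_1=C_1^{\chi(\sigma)}$, i.e.\ $\chi(\sigma)\equiv 1\pmod{n}$ for all $\sigma$, so $(\Zz/n\Zz)^{\times}=\{1\}$; if instead some $\sigma_0$ swaps $t_1$ and $t_2$, the stabilizer of $t_1$ has index $2$ and the same computation yields $\chi(G_\Qq)=\{\pm 1\}$. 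Either way $\varphi(n)\le 2$, hence $n\in\{2,3,4,6\}$. In case (b) with $n\ge 3$, the order-$n$ class, which I may take to be $C_3=\{\rho,\rho^{-1}\}$ for a generating rotation $\rho$, lies over the \emph{unique} branch point of order $n$, which is therefore $\Qq$-rational; the lemma then forces $\chi(\sigma)\equiv\pm 1\pmod{n}$ for all $\sigma$, again giving $n\in\{2,3,4,6\}$ (the degenerate value $n=2$, where $D_2\cong(\Zz/2\Zz)^2$, being allowed).

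Finally, for case (e) I would use that $A_5$ has exactly two conjugacy classes of elements of order $5$, interchanged by squaring: a $5$-cycle is $A_5$-conjugate to its inverse but not to its square. The order-$5$ branch point is once more unique, hence $\Qq$-rational, so the Branch Cycle Lemma gives $C_3=C_3^{\chi(\sigma)}$ for all $\sigma$; choosing $\sigma$ with $\chi(\sigma)\equiv 2\pmod{5}$ sends $C_3$ to the \emph{other} order-$5$ class, a contradiction. Thus $(2,3,5)$ cannot occur over $\Qq$. The only delicate point in Part (2) is the bookkeeping of these distinguished inertia classes together with the identification of which branch points are Galois-fixed; granting that, each assertion is a direct application of the Branch Cycle Lemma.
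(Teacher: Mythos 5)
Your proposal is correct and takes essentially the same route the paper indicates: the paper gives no detailed proof, stating only that Propositions 7.1--2 ``are essentially applications of the Riemann--Hurwitz formula and the Branch Cycle Lemma'' (with a reference to Malle--Matzat, Ch.~I, Thm.~6.2), and this is exactly what you execute --- Riemann--Hurwitz plus the classification of finite subgroups of $\mathrm{PGL}_2(\overline{\Qq})$ (equivalently, spherical triangle groups) for part (1), and the Branch Cycle Lemma combined with surjectivity of the mod-$n$ cyclotomic character over $\Qq$ and the splitting of the order-$5$ class of $A_5$ for part (2). I see no gaps; your handling of the delicate points (the dichotomy for the two branch points in case (a), the $\Qq$-rationality of the unique branch point of maximal ramification index in cases (b) and (e), and the fact that a $5$-cycle is $A_5$-conjugate to its inverse but not its square) is accurate.
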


\begin{proposition} \label{RH genus 1}
Assume that $g=1$. Then, the following two conclusions hold.

\vspace{0.5mm}

\noindent
{\rm{(1)}} One of the following conditions holds:

\vspace{0.5mm}

{\rm{(a)}} $r=3$ and $(e_1,e_2,e_3)=(2,3,6)$,

\vspace{0.5mm}

{\rm{(b)}} $r=3$ and $(e_1,e_2,e_3)=(2,4,4)$,

\vspace{0.5mm}

{\rm{(c)}} $r=3$ and $(e_1,e_2,e_3)=(3,3,3)$,

\vspace{0.5mm}

{\rm{(d)}} $r=4$ and $(e_1,e_2,e_3,e_4)=(2,2,2,2)$.

\vspace{0.5mm}

\noindent
{\rm{(2)}} Assume that $k=\mathbb{Q}$. Then, cases {\rm{(1)}}, {\rm{(2)}}, and {\rm{(3)}} cannot occur.
\end{proposition}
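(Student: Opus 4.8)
\emph{Proof proposal.}

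For conclusion (1), the plan is to extract the numerical constraint from the Riemann--Hurwitz formula and then solve a small Diophantine problem. By regularity the geometric Galois group of $E/k(T)$ is $G$, so we have a $G$-cover $X\to\mathbb{P}^1$ with $X$ of genus $g=1$ and branch data $(e_1,\dots,e_r)$. Riemann--Hurwitz gives $0=2g-2=|G|\bigl(-2+\sum_{i=1}^r(1-1/e_i)\bigr)$, hence $\sum_{i=1}^r(1-1/e_i)=2$ with each $e_i\ge 2$. Since every summand lies in $[1/2,1)$, the sum lies in $[r/2,r)$, forcing $2<r\le 4$, so $r\in\{3,4\}$. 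If $r=4$, each summand must equal $1/2$, giving $(2,2,2,2)$. If $r=3$, the condition reads $1/e_1+1/e_2+1/e_3=1$; ordering $e_1\le e_2\le e_3$ gives $e_1\in\{2,3\}$, and a routine case check produces exactly $(2,3,6)$, $(2,4,4)$, $(3,3,3)$. This yields the four cases (a)--(d).

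For conclusion (2) (the case $k=\Qq$), the key observation is that $G=\Gal(E/\Qq(T))$ acts faithfully on $X$ by $\Qq$-automorphisms, with $X/G=\mathbb{P}^1_{\Qq}$. I would isolate the following lemma: \emph{any automorphism $\gamma$ of the genus-$1$ curve $X$ defined over $\Qq$ and having a geometric fixed point has order at most $2$.} Granting this, cases (a), (b), (c)---that is, the types $(2,3,6)$, $(2,4,4)$, $(3,3,3)$---are excluded at once: each contains a ramification index $e_i\ge 3$, and the inertia group at a point of $X$ lying above the corresponding branch point is cyclic of order $e_i$, generated by an element $\gamma\in G$ fixing that point; the lemma then forces $e_i\le 2$, a contradiction. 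The type $(2,2,2,2)$ is untouched, since an order-$2$ automorphism (e.g. the hyperelliptic involution) is permitted.

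The heart of the matter, and the step I expect to be the main obstacle, is the lemma, which I would prove via the action on global differentials. The space $H^0(X,\Omega^1_{X/\Qq})$ is one-dimensional over $\Qq$, say spanned by $\omega_0$, and $\gamma^*\omega_0=\mu\,\omega_0$ for some $\mu\in\Qq^{\times}$; as $\gamma$ has finite order, $\mu$ is a root of unity, so $\mu\in\{\pm1\}$. On the other hand, base-changing to $\overline{\Qq}$ and taking the fixed point of $\gamma$ as origin realizes $(X_{\overline{\Qq}},\text{fixed point})$ as an elliptic curve $E_0$ with $\gamma\in\Aut(E_0,0)$; the standard faithful character $\Aut(E_0,0)\hookrightarrow\overline{\Qq}^{\times}$ given by the action on the invariant differential sends $\gamma$ to a root of unity whose order equals that of $\gamma$, and this scalar is exactly $\mu$. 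Hence $\mu$ is simultaneously a primitive root of unity of order $\operatorname{ord}(\gamma)$ and an element of $\{\pm1\}$, so $\operatorname{ord}(\gamma)\in\{1,2\}$, proving the lemma.

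Finally, I would note that this differential-form argument is a clean, group-uniform substitute for the Branch Cycle Lemma computation hinted at in the text: arithmetically, the obstruction is simply that $\Qq$ contains no primitive cube, fourth, or sixth root of unity, which is exactly why an automorphism of order $3$, $4$, or $6$ of an elliptic curve---that is, complex multiplication by $\zeta_3$, $i$, or $\zeta_6$---cannot be defined over $\Qq$. One could instead argue that the distinct ramification indices in type $(2,3,6)$ force all three branch points to be $\Qq$-rational with rational inertia classes, with analogous constraints in the other two types; but the differential approach has the advantage of avoiding any descent to a cyclic quotient of $G$, which may fail to exist (for instance when $G$ is perfect).
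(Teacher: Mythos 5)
Your proof is correct. Part (1) is exactly the Riemann--Hurwitz enumeration the paper intends, but for part (2) you take a genuinely different route: the paper gives no detailed argument, saying only that Propositions 7.1--2 are ``essentially applications of the Riemann--Hurwitz formula and the Branch Cycle Lemma'' (citing \cite{MM99}), and it is worth stressing that the Branch Cycle Lemma alone does not suffice here --- for $G=S_4$ with type $(2,4,4)$, the classes of transpositions and of $4$-cycles are rational in $S_4$, so the BCL imposes no condition even with all branch points $\Qq$-rational. The classical route therefore first passes to the quotient of $X$ by the normal subgroup $N\subset G$ of elements acting as translations (equivalently, without geometric fixed points): inertia subgroups meet $N$ trivially, so $E^{N}/\Qq(T)$ is a $\Qq$-regular cyclic extension of degree $3$, $4$, or $6$ of genus $1$ with the same ramification type, and \emph{there} the BCL yields the contradiction, since the relevant classes in a cyclic group of order $3$, $4$, or $6$ are not rational. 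Your lemma --- a $\Qq$-automorphism of a genus-$1$ curve with a geometric fixed point has order at most $2$, proved by comparing its eigenvalue on the one-dimensional $\Qq$-vector space $H^0(X,\Omega^1_{X/\Qq})$ (a root of unity in $\Qq$, hence $\pm1$) with the faithful tangent character of $\Aut(X_{\overline{\Qq}},P)$ --- short-circuits this reduction and applies uniformly to every inertia generator; both arguments ultimately express that $\zeta_3$, $i$, $\zeta_6\notin\Qq$, as you observe. One small slip in your closing remark: the descent you say you are avoiding is to the translation quotient of $X$, which always exists, not to an abstract cyclic quotient of $G$; in fact a group acting faithfully on a genus-$1$ curve with quotient $\Pp^1$ is an extension of a cyclic group of order $2$, $3$, $4$, or $6$ by a group of translations, hence solvable and never perfect (compare the use of \cite{GT90} in the proof of Proposition \ref{list genus leq 1}). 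This inaccuracy occurs only in a side remark and does not affect your proof.
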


Now, for the convenience of the reader, we summarize some important properties that hold in both the cases $g=0$ and $g=1$.

\begin{proposition} \label{list genus leq 1}
Assume that $g \leq 1$. Then, the following two conclusions hold.

\vspace{0.5mm}

\noindent
{\rm{(1)}} The group $G$ is either solvable or $A_5$.

\vspace{0.5mm}

\noindent
{\rm{(2)}} Assume that $k=\Qq$. Then, the group $G$ is either solvable of even order or $\Zz/3\Zz$. 
\end{proposition}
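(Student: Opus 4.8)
The plan is to reduce to the two cases $g=0$ and $g=1$ already analyzed in Propositions \ref{list genus 0} and \ref{RH genus 1}, and to supply the one extra group-theoretic input — solvability in the genus one case — that is not immediate from the ramification data.

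For conclusion (1), first suppose $g=0$. By Proposition \ref{list genus 0} the group $G$ is one of $\Zz/n\Zz$, $D_n$, $A_4$, $S_4$, or $A_5$. Each of the first four families is solvable, so $G$ is solvable unless $G=A_5$, which is exactly the asserted dichotomy. Now suppose $g=1$. Here Proposition \ref{RH genus 1} only constrains the ramification type, so I would argue geometrically. Passing to $\overline{\Qq}$, the regularity of $E/k$ gives a Galois cover $X \to \Pp^1_{\overline{\Qq}}$ with group $G$, where $X$ is the smooth projective model of $E\overline{\Qq}$ and has genus $1$; in particular $G$ embeds into $\Aut(X)$. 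Over $\overline{\Qq}$ the curve $X$ is an elliptic curve, and $\Aut(X)$ is the semidirect product of the abelian group of translations $X(\overline{\Qq})$ by the cyclic group $\Aut(X,O)$ of automorphisms fixing a base point, whose order divides $6$. Hence $\Aut(X)$ is metabelian, so its subgroup $G$ is solvable. This settles conclusion (1).

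For conclusion (2), take $k=\Qq$. If $g=0$, then Proposition \ref{list genus 0}(2) rules out $A_5$ and forces $n \in \{2,3,4,6\}$ in cases (a) and (b); running through the surviving groups $\Zz/n\Zz$ with $n \in \{2,3,4,6\}$, $D_n$ with $n \in \{2,3,4,6\}$, $A_4$, and $S_4$, every one is solvable of even order except $\Zz/3\Zz$, which is the stated exception. If $g=1$, then Proposition \ref{RH genus 1}(2) leaves only the ramification type $(2,2,2,2)$; since the inertia generators then all have order $2$, the group $G$ contains an element of order $2$, so $2 \mid |G|$, and $G$ is solvable by conclusion (1). Thus over $\Qq$ the group $G$ is solvable of even order or $\Zz/3\Zz$, as required.

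The only genuinely new step beyond quoting the two earlier propositions is the solvability of $G$ when $g=1$, and I expect this to be the main obstacle, since Proposition \ref{RH genus 1} records the ramification type but not the isomorphism type of $G$. The cleanest route is the structural description of $\Aut(X)$ for an elliptic curve $X$ sketched above; a purely combinatorial alternative would identify $G$ as a finite quotient of the corresponding Euclidean crystallographic group attached to the signature $(2,3,6)$, $(2,4,4)$, $(3,3,3)$, or $(2,2,2,2)$, each of which is metabelian, again forcing $G$ to be solvable.
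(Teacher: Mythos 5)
Your proposal is correct, and on the one genuinely substantive point --- solvability of $G$ when $g=1$ --- it takes a different route from the paper. The paper settles that case with a single citation, \cite[Proposition 2.4]{GT90}, whereas you prove it directly: since $E/k$ is regular, $G \cong \Gal(E\overline{\Qq}/\overline{\Qq}(T))$ acts faithfully on the genus-one curve $X$ with function field $E\overline{\Qq}$, so $G$ embeds into $\Aut(X) = X(\overline{\Qq}) \rtimes \Aut(X,O)$, which is metabelian; your crystallographic alternative (the signatures $(2,3,6)$, $(2,4,4)$, $(3,3,3)$, $(2,2,2,2)$ are exactly the Euclidean ones, and the corresponding wallpaper groups are extensions of $\Zz^2$ by a cyclic group, hence metabelian) is equally valid, since $G$ is a quotient of the relevant orbifold group. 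This buys self-containedness --- in fact it yields the stronger conclusion that $G$ is metabelian in the genus-one case --- at essentially no cost, while the paper's citation is merely shorter. One small factual slip that does not affect the argument: over $\overline{\Qq}$ the group $\Aut(X,O)$ is cyclic of order $2$, $4$, or $6$, so its order does \emph{not} always divide $6$ (it is $4$ when $j(X)=1728$); your proof only uses that $\Aut(X,O)$ is abelian, so the metabelian conclusion stands. The remainder of your argument --- the $g=0$ case read off from Proposition \ref{list genus 0}, and conclusion (2) obtained by combining Propositions \ref{list genus 0} and \ref{RH genus 1} with conclusion (1), using that the type $(2,2,2,2)$ forces an element of order $2$ and hence even order --- is exactly the paper's ``straightforward combination'', spelled out.
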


\begin{proof}
First, we show (1). As the desired conclusion is quite clear if $g=0$ (by Proposition \ref{list genus 0}), we may assume that $g=1$. Then, e.g., \cite[Proposition 2.4]{GT90} provides that $G$ is solvable. As for (2), it is a straightforward combination of Propositions 7.1-2 and the above conclusion (1). 
\end{proof}

Finally, we make the Galois group $G$ totally explicit, under the extra assumption that this group is abelian. As Proposition \ref{list genus leq 1 abelian} below is a straightforward application of Propositions 7.1-2 and the Branch Cycle Lemma, details are left to the reader.

\begin{proposition} \label{list genus leq 1 abelian}
Assume that $G$ is abelian and $g \leq 1$. Then, the following two conclusions hold.

\vspace{0.5mm}

\noindent
{\rm{(1)}} The group $G$ is one of the following finite groups: 
$\Zz/n\Zz \, (n \geq 2), (\Zz/2\Zz)^2, \Zz/2\Zz \times \Zz/4\Zz, (\Zz/3\Zz)^2, (\Zz/2\Zz)^3.$

\vspace{0.5mm}

\noindent
{\rm{(2)}} Assume that $k=\Qq$. Then, $G$ is one of the following finite groups:
$\Zz/2\Zz, \Zz/3\Zz, \Zz/4\Zz, \Zz/6\Zz, (\Zz/2\Zz)^2, (\Zz/2\Zz)^3.$
\end{proposition}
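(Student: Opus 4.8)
The plan is to run the classification of Propositions \ref{list genus 0} and \ref{RH genus 1} and, for each admissible ramification type $(e_1,\dots,e_r)$, read off the abelian groups $G$ that can actually carry it. The only input beyond those propositions is the standard description of the geometric monodromy: if $g_i$ denotes a distinguished generator of the inertia canonical conjugacy class $C_i$, then $g_1,\dots,g_r$ generate $G$ and, after a suitable ordering, satisfy the product-one relation $g_1\cdots g_r=1$. Since $G$ is abelian, each $C_i=\{g_i\}$ is a singleton, $e_i$ is the order of $g_i$, and the product-one relation is independent of the ordering. Determining the abelian groups compatible with a prescribed list of element orders and this single relation is then an elementary computation, carried out type by type.

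For $g=0$ one uses Proposition \ref{list genus 0}: the only abelian possibilities are the cyclic case (a), giving $G\cong\Zz/n\Zz$ for every $n\geq 2$, together with the dihedral case (b) for $n=2$, where $D_2\cong(\Zz/2\Zz)^2$ is abelian (this last point is easy to overlook). For $g=1$ I would treat the four types of Proposition \ref{RH genus 1} separately. Writing $G$ as a product of its primary components and using $g_1\cdots g_r=1$ one finds: type $(2,3,6)$ forces the $3$-part and $2$-part to be generated by a single element, hence $G\cong\Zz/6\Zz$; type $(2,4,4)$ forces $g_3^2=g_2^2$, hence $G\cong\Zz/4\Zz$ or $\Zz/2\Zz\times\Zz/4\Zz$; type $(3,3,3)$ gives $G\cong\Zz/3\Zz$ or $(\Zz/3\Zz)^2$; and type $(2,2,2,2)$ forces $G$ to be elementary abelian of rank at most $3$, i.e. $\Zz/2\Zz$, $(\Zz/2\Zz)^2$, or $(\Zz/2\Zz)^3$. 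Taking the union over all types yields exactly the list in conclusion (1).

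For conclusion (2) I would invoke the sharper statements \ref{list genus 0}(2) and \ref{RH genus 1}(2) valid over $\Qq$, whose proofs already encode the cyclotomic constraints coming from the Branch Cycle Lemma. In genus $0$ these force $n\in\{2,3,4,6\}$ in the cyclic case (and $n=2$ in the dihedral case), leaving $\Zz/2\Zz,\Zz/3\Zz,\Zz/4\Zz,\Zz/6\Zz,(\Zz/2\Zz)^2$; in genus $1$ only type $(2,2,2,2)$ survives, leaving $\Zz/2\Zz,(\Zz/2\Zz)^2,(\Zz/2\Zz)^3$. The union is the list in conclusion (2). The only genuinely delicate point is making sure each ramification type is matched with \emph{all} compatible abelian groups and no spurious ones: the product-one relation must be checked to be realizable (e.g. that $g_r=\left(g_1\cdots g_{r-1}\right)^{-1}$ really has the prescribed order $e_r$) rather than merely consistent, and one must not forget that a single abstract group can occur for several distinct types. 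Everything else is routine, which is why the paper leaves the details to the reader.
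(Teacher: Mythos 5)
Your proof is correct and follows exactly the route the paper indicates and then leaves to the reader: read off the admissible ramification types from Propositions \ref{list genus 0} and \ref{RH genus 1}, and over $\Qq$ add the rationality constraints of their parts (2), which is where the Branch Cycle Lemma enters. Your use of the generation and product-one relation for the (singleton) inertia classes to determine the abelian groups carrying each type is precisely the elementary computation the paper omits, including the easily overlooked point that case (b) of Proposition \ref{list genus 0} contributes $D_2\cong(\Zz/2\Zz)^2$.
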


\section{Appendix B: on parametric quadratic extensions in low genus}

The aim of this section consists in proving Proposition \ref{r leq 4} on $\Qq$-regular quadratic extensions of $\Qq(T)$ below.

\begin{proposition} \label{r leq 4}
Let $E/\Qq(T)$ be a $\Qq$-regular quadratic extension with at most 4 branch points. Then, the following conditions are equivalent:

\vspace{0.5mm}

\noindent
{\rm{(1)}} $E/\Qq(T)$ is parametric over $\Qq$,

\vspace{0.5mm}

\noindent
{\rm{(2)}} $E/\Qq(T)$ has exactly two branch points and both are $\Qq$-rational.

\vspace{0.5mm}

\noindent
Moreover, if condition {\rm{(2)}} fails, then there exist infinitely many quadratic extensions of $\Qq$ each of which is not a specialization of $E/\Qq(T)$.
\end{proposition}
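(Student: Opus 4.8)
The plan is to reduce the extension to a concrete Kummer form and then to treat the genus $0$ and genus $1$ cases separately, the latter splitting further according to whether a branch point is $\Qq$-rational. Write $E=\Qq(T)(\sqrt{c\,\ell(T)})$ with $c\in\Qq^\times$ and $\ell\in\Qq[T]$ squarefree; the branch points are then the roots of $\ell$ together with $\infty$ when $\deg\ell$ is odd, and for $t_0\in\Pp^1(\Qq)$ off the branch locus one has $E_{t_0}=\Qq(\sqrt{c\,\ell(t_0)})$. Since the group is $\Zz/2\Zz$, every inertia class has order $2$, so the Branch Cycle Lemma forces an even number of branch points; combined with the Riemann-Hurwitz formula (which gives genus $(r-2)/2$) and the constraints over $\Qq$ recorded in Propositions \ref{list genus 0} and \ref{RH genus 1}, the hypothesis $r\le 4$ leaves exactly $r=2$ (genus $0$) and $r=4$ (genus $1$).

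First I would prove that (2) implies (1). If $r=2$ and both branch points lie in $\Pp^1(\Qq)$, a $\Qq$-rational Möbius change of variable moves them to $0$ and $\infty$, so that $c\,\ell(T)\equiv c'T$ modulo squares in $\Qq(T)^\times$; then $E_{t_0}=\Qq(\sqrt{c't_0})$, and since $t_0\mapsto c't_0$ is a bijection of $\Qq^\times$, every quadratic extension $\Qq(\sqrt d)$ occurs, giving parametricity. For the converse together with the ``moreover'' clause it suffices to show that whenever (2) fails there are infinitely many squarefree $d$ with $\Qq(\sqrt d)$ not a specialization, as this yields both the failure of (1) and the final assertion.

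The key elementary device is a ramification obstruction. For an odd prime $p$ dividing neither $c$ nor the discriminant, the coefficients, or the leading coefficient of $\ell$, the field $\Qq(\sqrt{c\,\ell(t_0)})$ is unramified at $p$ for all $t_0\in\Qq_p$ as soon as $v_p(c\,\ell(t_0))$ is always even; this happens precisely when $\infty$ is not a branch point (so $\deg\ell$ is even) and $\ell$ has no root modulo $p$, since for $t_0\in\Zz_p$ the value is a unit and for $v_p(t_0)<0$ its valuation is $\deg\ell\cdot v_p(t_0)$. When this holds for infinitely many $p$, each such $p$ yields a missed field $\Qq(\sqrt p)$, ramified at $p$. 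I would run this in two cases. If $r=2$ and the branch points are not both rational, then $\ell$ is an irreducible quadratic and the primes inert in the corresponding quadratic field (Chebotarev, density $1/2$) have no root of $\ell$ modulo $p$. If $r=4$ and no branch point is $\Qq$-rational, then $\deg\ell=4$ and $\ell$ has no linear factor, so its Galois group acting on the roots contains a fixed-point-free element (Jordan's theorem when $\ell$ is irreducible, and directly when $\ell$ splits into two irreducible quadratics); by the Frobenius density theorem a positive proportion of primes $p$ have no root of $\ell$ modulo $p$. In both cases infinitely many $\Qq(\sqrt p)$ are missed.

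The remaining and genuinely hard case is $r=4$ with at least one $\Qq$-rational branch point $a$. Here the model $C_d\colon dY^2=c\,\ell(t)$ always carries the rational point $(a,0)$, so there is no local obstruction: $C_d$ is an elliptic curve $E^{(d)}$, and $\Qq(\sqrt d)$ is realized exactly when $E^{(d)}(\Qq)$ contains a point with $Y\neq 0$. The main obstacle is therefore to produce infinitely many squarefree $d$ for which $E^{(d)}(\Qq)$ consists only of points with $Y=0$, i.e. for which the twist has rank $0$ with no additional torsion. I would exploit that a rational branch point supplies a rational $2$-torsion point (two of them when two branch points are rational), making a $2$-isogeny descent available uniformly across the family $\{E^{(d)}\}$, and then argue through a congruence and Chebotarev analysis of the associated $2$-Selmer groups that infinitely many $d$ give trivial Selmer beyond the contribution of the rational $2$-torsion, whence rank $0$ and no point with $Y\neq 0$. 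This descent over the twist family is where the real difficulty lies and constitutes the heart of the argument.
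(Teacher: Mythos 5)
Your case analysis matches the paper's in structure: genus $0$ with two rational branch points gives parametricity (your M\"obius reduction to $\Qq(T)(\sqrt{c'T})$ is exactly right), and in all other cases one must exhibit infinitely many missed quadratic fields. For the genus-$0$ case with conjugate branch points and the genus-$1$ case with no rational branch point, your elementary ramification obstruction (no root of $\ell$ mod $p$ plus even degree forces $v_p(c\,\ell(t_0))$ even, so $\Qq(\sqrt{p})$ is never a specialization) is sound and in fact more self-contained than the paper, which delegates these cases to \cite[Proposition 3.1 and \S3.1]{Leg15} and \cite[Theorem 3.1 and \S3.4.1]{Leg16b} via the twisting bridge of Lemma \ref{twisting explicit}. (A small slip: the parity of the number of branch points comes from Riemann--Hurwitz, not the Branch Cycle Lemma, but nothing depends on this.)

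The genuine gap is in the case you yourself flag as the heart of the matter: $r=4$ with a rational branch point. You correctly reduce it to showing that the elliptic curve $y^2=\ell(t)$ has infinitely many squarefree twists $d$ whose only rational points have $Y=0$, but you then merely announce a $2$-isogeny descent with ``a congruence and Chebotarev analysis of the associated $2$-Selmer groups'' without executing it. This is not a routine verification: producing infinitely many quadratic twists of Mordell--Weil rank $0$ is a deep theorem, and carrying it out by descent in twist families is the subject of substantial research (Heath-Brown, Kane, Klagsbrun--Mazur--Rubin, etc., for specific families). The paper avoids this by citing the known unconditional result that every elliptic curve over $\Qq$ has infinitely many rank-$0$ twists (via the references in \cite{Dab08}, ultimately resting on Waldspurger-type nonvanishing plus Kolyvagin). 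Moreover, even granting rank $0$, a $2$-descent bounds the rank but says nothing about odd-order torsion, so a twist could still have a torsion point with $Y\neq 0$; the paper closes this with \cite[Proposition 1]{GM91}, which shows only finitely many twists carry a non-trivial torsion point, whereas your sketch conflates ``trivial Selmer beyond the $2$-torsion'' with ``no point with $Y\neq 0$.'' To repair your argument, either import these two citations (as the paper does) or supply the full descent argument together with a separate torsion-killing step.
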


The proof is organized as follows. In \S8.1, we recall some standard background on hyperelliptic curves and their quadratic twists. In \S8.2, we explain how to translate the problem in terms of rational points on quadratic twists. Finally, we prove Proposition \ref{r leq 4} in \S8.3.

\subsection{Hyperelliptic curves and their quadratic twists} 

Let $P(T) \in \Zz[T]$ be a separable polynomial with degree $n$. Set $$P(T)=a_0 + a_1 T + \cdots + a_{n-1} T^{n-1} + a_n T^n.$$

\subsubsection{The even case} 

First, we assume that $n$ is even. Consider the equivalence relation $\sim$ on $\overline{\Qq}^3 \setminus \{(0,0,0)\}$ defined as follows:
$$(y_1,t_1,z_1) \sim (y_2,t_2,z_2)$$
if and only if there exists some $\lambda \in \overline{\Qq} \setminus \{0\}$ such that $$(y_2,t_2,z_2) = (\lambda^{n/2}y_1,\lambda t_1, \lambda z_1).$$ The quotient space $(\overline{\Qq}^3 \setminus \{(0,0,0)\} )/ \sim$ is a weighted projective space that is denoted by 
$$\mathbb{P}_{n/2,1,1}(\overline{\Qq}).$$ Given $(y,t,z) \in \overline{\Qq}^3 \setminus \{(0,0,0)\}$, the corresponding point in $\mathbb{P}_{n/2,1,1}(\overline{\Qq})$ is denoted by $[y:t:z].$ 

Set
$$P(T,Z)= a_0 Z^n + a_1 Z^{n-1} T + \cdots + a_{n-1} Z T^{n-1} +a_n T^n.$$
The equation $Y^2= P(T,Z)$ in $\mathbb{P}_{n/2,1,1}(\overline{\Qq})$ is {\it{the hyperelliptic curve associated with $P(T)$}}, denoted by $C_{P(T)}$. The set of all $\Qq$-rational points on $C_{P(T)}$, i.e., the set of all points $[y:t:z] \in \mathbb{P}_{n/2,1,1}(\overline{\Qq})$ such that $(y,t,z) \in \Qq^3 \setminus \{(0,0,0)\}$ and $y^2= P(t,z)$, is denoted by $C_{P(T)}(\Qq).$ A point $[y:t:z] \in C_{P(T)}(\Qq)$ is {\it{trivial}} if $y=0$ and {\it{non-trivial}} otherwise. Equivalently, $[y:t:z] \in C_{P(T)}(\Qq)$ is trivial if $z \not=0$ and $P(t/z)=0$.

Let $d$ be a squarefree integer. The hyperelliptic curve $Y^2 = d \cdot P(T,Z)$ associated with the polynomial $d \cdot P(T)$ is called {\it{the $d$-th quadratic twist of $C_{P(T)}$}}; we denote it by $C_{d \cdot P(T)}$. 

\subsubsection{The odd case}

The case where $n$ is odd is quite similar. For the convenience of the reader and to avoid confusion, we redetail it below.

Consider the following equivalence relation $\sim$ on $\overline{\Qq}^3 \setminus \{(0,0,0)\}$:
$$(y_1,t_1,z_1) \sim (y_2,t_2,z_2)$$
if and only if there exists some $\lambda \in \overline{\Qq} \setminus \{0\}$ such that $$(y_2,t_2,z_2) = (\lambda^{(n+1)/2}y_1,\lambda t_1, \lambda z_1).$$ The quotient space $(\overline{\Qq}^3 \setminus \{(0,0,0)\} )/ \sim$ is a weighted projective space that is denoted by $\mathbb{P}_{(n+1)/2,1,1}(\overline{\Qq}).$ Given $(y,t,z) \in \overline{\Qq}^3 \setminus \{(0,0,0)\}$, the corresponding point in $\mathbb{P}_{(n+1)/2,1,1}(\overline{\Qq})$ is denoted by $[y:t:z].$ 

Set
$$P(T,Z)= a_0 Z^{n+1} + a_1 Z^{n} T + \cdots + a_{n-1} Z^2 T^{n-1} +a_n Z T^n.$$
The equation $Y^2= P(T,Z)$ in $\mathbb{P}_{(n+1)/2,1,1}(\overline{\Qq})$ is {\it{the hyperelliptic curve associated with $P(T)$}}; we denote it by $C_{P(T)}$. The set of all $\Qq$-rational points on $C_{P(T)}$, i.e., the set of all points $[y:t:z] \in \mathbb{P}_{(n+1)/2,1,1}(\overline{\Qq})$ such that $(y,t,z) \in \Qq^3 \setminus \{(0,0,0)\}$ and $y^2= P(t,z)$, is denoted by $C_{P(T)}(\Qq).$ A point $[y:t:z] \in C_{P(T)}(\Qq)$ is {\it{trivial}} if $y=0$ and {\it{non-trivial}} otherwise. Equivalently, $[y:t:z] \in C_{P(T)}(\Qq)$ is trivial if either $z=0$ (this point, which is $[0:1:0]$, is the point at $\infty$) or $z \not=0$ and $t/z$ is a root of $P(T)$. 

Given a squarefree integer $d$, the hyperelliptic curve $Y^2 = d \cdot P(T,Z)$ associated with the polynomial $d \cdot P(T)$ is called {\it{the $d$-th quadratic twist of $C_{P(T)}$}}; we denote it by $C_{d \cdot P(T)}$. 

\subsection{From specializations of quadratic extensions to rational points on twisted hyperelliptic curves and vice-versa}

Let $P(T) \in \Zz[T]$ be a separable polynomial with degree $n$, roots $t_1,\dots,t_n$, and leading coefficient $a_n$. For short, we set $E=\Qq(T)(\sqrt{P(T)})$.

\vspace{2mm}

First, we make the set of branch points and the specializations of the $\Qq$-regular quadratic extension $E/\Qq(T)$ explicit.

\begin{lemma} \label{lemma 1}
The set of branch points of $E/\Qq(T)$ is either the set $\{t_1,\dots,t_n\}$ (if $n$ is even) or the set $\{t_1,\dots,t_n\} \cup \{ \infty \}$ (if $n$ is odd).
\end{lemma}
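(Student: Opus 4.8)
The plan is to reduce the computation of branch points to the standard ramification criterion for quadratic (Kummer) extensions of $\overline{\Qq}(T)$ and then evaluate the relevant valuations place by place. By definition, the branch points of $E/\Qq(T)$ are the places of $\overline{\Qq}(T)$ that ramify in $E\overline{\Qq} = \overline{\Qq}(T)(\sqrt{P(T)})$, so I would work throughout over the algebraically closed field $\overline{\Qq}$, where $P(T) = a_n \prod_{i=1}^n (T - t_i)$ with the roots $t_1, \dots, t_n$ pairwise distinct by separability of $P$.

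First I would record that the extension $\overline{\Qq}(T)(\sqrt{P(T)})/\overline{\Qq}(T)$ is genuinely of degree $2$: since $P$ is separable and non-constant, every root occurs with multiplicity $1$, so $P$ is not a square in $\overline{\Qq}(T)$ and $\sqrt{P}$ generates a quadratic extension. As we are in characteristic $0$ and $\overline{\Qq}$ contains the relevant roots of unity, this is a Kummer extension, and the standard criterion applies: a place $v$ of $\overline{\Qq}(T)$ ramifies in $\overline{\Qq}(T)(\sqrt{P(T)})$ if and only if $v(P(T))$ is odd, the ramification index being $2/\gcd(2, v(P(T)))$.

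Next I would apply this criterion at each place. For a finite place corresponding to $(T - t_0)\,\overline{\Qq}[T-t_0]$, one has $v_{t_0}(P(T)) = 1$ if $t_0 \in \{t_1, \dots, t_n\}$, each root being simple, and $v_{t_0}(P(T)) = 0$ otherwise; hence the finite branch points are exactly $t_1, \dots, t_n$. For the place at $\infty$, I would switch to the uniformizer $1/T$ and compute $v_\infty(P(T)) = -\deg P = -n$; this is odd precisely when $n$ is odd, so $\infty$ is a branch point if and only if $n$ is odd.

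Combining the two cases gives the claimed description of the branch point set. The computation is routine; the only point requiring care is the place at infinity, where the entire even/odd dichotomy originates. One must remember to change the local parameter to $1/T$ and track that $v_\infty(P) = -n$, rather than trying to read off the parity from the finite picture alone.
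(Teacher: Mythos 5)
Your proof is correct and is in substance identical to the paper's: the paper disposes of this lemma by citing \cite[Proposition 6.2.3]{Sti09}, which is precisely the Kummer-extension ramification criterion ($v$ ramifies in $\overline{\Qq}(T)(\sqrt{P})/\overline{\Qq}(T)$ if and only if $v(P)$ is odd) that you state and then evaluate place by place. Your place-by-place computation, including the change of parameter to $1/T$ at infinity giving $v_\infty(P)=-n$, is exactly the content of the cited result, so you have simply written out the details the paper delegates to the reference.
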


\begin{proof}
See, e.g., \cite[Proposition 6.2.3]{Sti09}.
\end{proof}

\begin{lemma} \label{lemma 2}
{\rm{(1)}} Let $t_0 \in \Qq \setminus \{t_1,\dots,t_n\}$. Then, $E_{t_0} = \Qq(\sqrt{P(t_0)}).$

\vspace{0.5mm}

\noindent
{\rm{(2)}} Assume that $n$ is even. Then, $E_\infty = \Qq(\sqrt{a_n})$.
\end{lemma}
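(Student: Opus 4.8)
The plan is to compute each residue extension directly from the defining equation $Y^2 = P(T)$ of $E/\Qq(T)$, writing $Y = \sqrt{P(T)}$, and to use Lemma \ref{lemma 1} to guarantee that the relevant point is not a branch point so that the specialization is well defined and unramified. In both parts the mechanism is the same: exhibit a local generator of the integral closure whose minimal polynomial has unit discriminant at the place under consideration, and read off the residue extension by reducing that minimal polynomial.

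For part (1), I would first note that $t_0 \in \Qq \setminus \{t_1,\dots,t_n\}$ forces $P(t_0) \neq 0$, so $t_0$ is not a branch point by Lemma \ref{lemma 1} and $E_{t_0}/\Qq$ is well defined. The element $Y$ satisfies the monic polynomial $Y^2 - P(T) \in \Qq[T][Y]$, whose discriminant $4P(T)$ is a unit in the local ring $\Qq[T]_{(T-t_0)}$ precisely because $P(t_0) \neq 0$. Hence $\Qq[T]_{(T-t_0)}[Y]$ is the integral closure of $\Qq[T]_{(T-t_0)}$ in $E$ and is unramified over it, so reducing the minimal polynomial modulo $T - t_0$ gives $Y^2 - P(t_0)$ and the residue extension is $\Qq[Y]/(Y^2 - P(t_0)) = \Qq(\sqrt{P(t_0)})$. (When $P(t_0)$ is a square in $\Qq$ both sides collapse to $\Qq$, so the identity still holds.)

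For part (2), I would pass to the local coordinate $U = 1/T$ at infinity, as prescribed by the convention in \S2.1. Since $n$ is even, Lemma \ref{lemma 1} shows that $\infty$ is not a branch point, so $E_\infty/\Qq$ is well defined. Writing $P(T) = a_0 + \cdots + a_n T^n$ and setting $W = Y\,U^{n/2}$, which makes sense exactly because $n/2 \in \Zz$, one computes
$$W^2 = Y^2 U^n = a_n + a_{n-1} U + \cdots + a_0 U^n \in \Qq[U].$$
The right-hand side takes the nonzero value $a_n$ at $U = 0$, so, exactly as in part (1), its discriminant is a unit in $\Qq[U]_{(U)}$; the extension is unramified at $U = 0$, and reducing $W^2 - (a_n + \cdots + a_0 U^n)$ modulo $U$ yields $W^2 - a_n$. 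Hence $E_\infty/\Qq = \Qq(\sqrt{a_n})$.

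The reduction-of-the-minimal-polynomial computation is routine; the one point requiring genuine care is the change of variable at infinity in part (2). One must replace $Y$ (which has a pole at $\infty$) by the correctly normalized generator $W = Y\,U^{n/2}$ so that $W$ is a local unit generator of the integral closure at $U = 0$ and reduction modulo $U$ is legitimate. This is precisely the role of the weighted normalization recorded in \S8.1, and it is the step I expect to be the main (though modest) obstacle.
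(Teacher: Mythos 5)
Your proposal is correct and follows essentially the same route as the paper: the paper also reduces the defining equation $Y^2-P(T)$ at $t_0$ (using that $Y^2-P(t_0)$ is separable since $P(t_0)\neq 0$) and, for part (2), performs the identical change of variable $U=1/T$ with $P(T)=(T^{n/2})^2\,Q(U)$, which is just your normalization $W=YU^{n/2}$ in disguise. Your discriminant-unit/integral-closure argument merely makes explicit the standard reduction step that the paper invokes implicitly, so the two proofs coincide in substance.
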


\begin{proof}
First, we prove part (1). As $P(t_0) \not=0$, the polynomial $Y^2-P(t_0)$ is separable. Since $E$ is the splitting field of $Y^2-P(T)$ over $\Qq(T)$, $E_{t_0}$ is the splitting field of $Y^2-P(t_0)$ over $\Qq$, i.e., $E_{t_0} = \Qq(\sqrt{P(t_0)})$.

Now, we prove part (2). Set
$P(T)=a_0 + a_1 T + \cdots + a_{n-1}T^{n-1} + a_n T^n.$
One has
$$P(T)=(T^{n/2})^{2} \Big(\frac{a_0}{T^n}  + \frac{a_1}{T^{n-1}} + \cdots + \frac{a_{n-1}}{T} + a_n \Big).$$
Set $U=1/T$ and $Q(U)= a_n + a_{n-1} U + \cdots + a_1 U^{n-1} + a_0 U^n.$ Since $n$ is even, $E$ is the splitting field of $Y^2-Q(U)$ over $\Qq(U)$ and, as 0 is not a root of $Q(U)$, the polynomial $Y^2-Q(0)$ is separable. Hence, the field $E_\infty$ is equal to the splitting field of $Y^2-Q(0)$ over $\Qq$, i.e., $E_\infty=\Qq(\sqrt{a_n})$, thus ending the proof of the lemma.
\end{proof}

Now, we need the following bridge between specializations of the extension $E/\Qq(T)$ and rational points on quadratic twists of $C_{P(T)}$.

\begin{lemma} \label{twisting explicit}
Let $d$ be a squarefree integer. Then, the following two conditions are equivalent:

\vspace{0.5mm}

\noindent
{\rm{(1)}} the quadratic extension $\Qq(\sqrt{d})/\Qq$ occurs as a specialization of the $\Qq$-regular quadratic extension $\Qq(T)(\sqrt{P(T)})/\Qq(T)$,

\vspace{0.5mm}

\noindent
{\rm{(2)}} the $d$-th quadratic twist $C_{d \cdot P(T)}$ has a non-trivial $\Qq$-rational point.
\end{lemma}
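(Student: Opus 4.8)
The plan is to translate both conditions into the existence of a value $t_0 \in \Qq$ at which $P$ takes a value lying in the square class of $d$, using the explicit description of the specializations from Lemma \ref{lemma 2}. Recall that for $t_0 \in \Qq \setminus \{t_1,\dots,t_n\}$ one has $E_{t_0} = \Qq(\sqrt{P(t_0)})$, and, when $n$ is even, $E_\infty = \Qq(\sqrt{a_n})$. Since $d$ is squarefree, for $m \in \Qq^\times$ one has $\Qq(\sqrt{d}) = \Qq(\sqrt{m})$ if and only if $m/d \in (\Qq^\times)^2$. Hence condition {\rm{(1)}} is equivalent to the existence of some $t_0 \in \Qq \setminus \{t_1,\dots,t_n\}$ with $P(t_0) \in d \cdot (\Qq^\times)^2$, together with the extra possibility $a_n \in d \cdot (\Qq^\times)^2$ when $n$ is even. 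The task is to show that this is exactly the condition for $C_{d \cdot P(T)}$ to carry a non-trivial $\Qq$-rational point.

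Next I would unwind condition {\rm{(2)}} through the affine chart. A non-trivial $\Qq$-point of $C_{d \cdot P(T)}$ is a class $[y:t:z]$ with $(y,t,z) \in \Qq^3 \setminus \{(0,0,0)\}$, $y^2 = d \cdot P(t,z)$, and $y \neq 0$. Because the weight $n/2$ (resp.\ $(n+1)/2$) is an integer when $n$ is even (resp.\ odd), any representative with $z \neq 0$ can be rescaled over $\Qq$ to have $z=1$; then $P(t,1) = P(t)$, so the equation reads $y^2 = d\,P(t)$ with $y \neq 0$. Thus $P(t) = d\,(y/d)^2 \in d \cdot (\Qq^\times)^2$ and $P(t) \neq 0$ (as $d \neq 0$ and $y \neq 0$), whence $t \notin \{t_1,\dots,t_n\}$ and $E_t = \Qq(\sqrt{d})$. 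Conversely, if $P(t_0) = d c^2$ with $c \in \Qq^\times$, then $[dc : t_0 : 1]$ is a non-trivial point. This establishes the desired equivalence for all points with $z \neq 0$.

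The step that requires care, and the only place where the parity of $n$ genuinely intervenes, is the locus $z = 0$. When $n$ is even, $P(t,0) = a_n t^n$, so a non-trivial point with $z=0$ forces $t \neq 0$ and can be normalized to $[y:1:0]$ with $y^2 = d\,a_n$ and $y \neq 0$; this is precisely the square-class condition $a_n \in d \cdot (\Qq^\times)^2$, matching the specialization $E_\infty = \Qq(\sqrt{a_n})$ of Lemma \ref{lemma 2}(2). When $n$ is odd, every monomial of $P(T,Z)$ is divisible by $Z$, so $P(t,0) = 0$ and any point with $z = 0$ is trivial; this is consistent with $\infty$ being a branch point of $E/\Qq(T)$ (Lemma \ref{lemma 1}), so that no specialization at $\infty$ exists. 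Assembling the $z \neq 0$ and $z = 0$ analyses in each parity shows that $C_{d \cdot P(T)}$ has a non-trivial $\Qq$-point if and only if some specialization of $E/\Qq(T)$ equals $\Qq(\sqrt{d})$, which is exactly the equivalence of {\rm{(1)}} and {\rm{(2)}}. I expect the main obstacle to be the bookkeeping of the weighted-projective rescaling and the boundary $z=0$ uniformly across the two parities; once these are handled, the square-class comparison is immediate.
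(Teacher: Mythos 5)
Your proof is correct and takes essentially the same route as the paper's: both reduce condition (1) to the square-class description of specializations via Lemma \ref{lemma 2}, then match this against $\Qq$-points of $C_{d \cdot P(T)}$ by treating the chart $z \neq 0$ and the locus $z = 0$ separately according to the parity of $n$, your normalization to $z=1$ (resp.\ $t=1$) being only a cosmetic variant of the paper's observation that $z^{n}$, $z^{n+1}$, and $t^{n}$ are squares in the relevant parity. (Incidentally, your point at infinity $[y:1:0]$ in the even case corrects what appears to be a typo $[y:0:1]$ in the paper's own proof.)
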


\begin{proof}

First, assume that $\Qq(\sqrt{d})/\Qq$ occurs as the specialization of $E/\Qq(T)$ at $t_0$ for some $t_0 \in \Qq\setminus \{t_1,\dots,t_n\}$. By part (1) of Lemma \ref{lemma 2}, one then has $\Qq(\sqrt{d}) = \Qq(\sqrt{P(t_0)})$. We then get $y^2=d \cdot P(t_0)$ for some $y \in \Qq \setminus \{0\}$. Hence, the non-trivial $\Qq$-rational point $[y:t_0:1]$ lies on the quadratic twist $C_{d \cdot P(T)}$. Now, assume that $n$ is even and $\Qq(\sqrt{d})/\Qq$ occurs as the specialization of $E/\Qq(T)$ at $\infty$. By part (2) of Lemma \ref{lemma 2}, one has $\Qq(\sqrt{d}) = \Qq(\sqrt{a_n})$. We then get $y^2=d \cdot a_n$ for some $y \in \Qq \setminus \{0\}$, i.e., the non-trivial $\Qq$-rational point $[y:0:1]$ lies on the quadratic twist $C_{d \cdot P(T)}$. Hence, implication (1) $\Rightarrow$ (2) holds.

Conversely, assume that $C_{d \cdot P(T)}$ has a $\Qq$-rational point $[y:t:z]$ with $y \not=0$. First, assume that $n$ is odd. Then, $z \not=0$ and $t/z$ is not a root of $P(T)$, i.e., $t/z$ is not a branch point of $E/\Qq(T)$; see Lemma \ref{lemma 1}. From
the equality $y^2=d \cdot z^{n+1} \cdot P(t/z)$, part (1) of Lemma \ref{lemma 2}, and as $n$ is odd, we get $\Qq(\sqrt{d}) =E_{t/z}$, as needed for (1). Now, assume that $n$ is even. If $z=0$, then $y^2=d \cdot a_n t^n$. This gives $\Qq(\sqrt{d}) = \Qq(\sqrt{a_n})$ (as $n$ is even and $t \not=0$) and then $\Qq(\sqrt{d}) = E_\infty$ by part (2) of Lemma \ref{lemma 2}. If $z \not=0$, then $t/z$ is not a root of $P(T)$, i.e., $t/z$ is not a branch point of $E/\Qq(T)$. From the equality $y^2=d \cdot z^{n} \cdot P(t/z)$, part (1) of Lemma \ref{lemma 2}, and as $n$ is even, we get $\Qq(\sqrt{d}) =E_{t/z}$, thus ending the proof of the lemma.
\end{proof}

\subsection{Proof of Proposition \ref{r leq 4}}

Set $E=\Qq(T)(\sqrt{P(T)})$, where $P(T)$ $\in \Zz[T]$ is separable. Since $E/\Qq(T)$ has at most 4 branch points, $P(T)$ has degree at most 4 (Lemma \ref{lemma 1}). First, assume that $P(T)$ has degree 2. Then, the desired conclusion follows from \cite[Proposition 3.1 and \S3.1]{Leg15}. Now, assume that $P(T)$ has degree 4 and no root in $\Qq$. Then, by \cite[Theorem 3.1 and \S3.4.1]{Leg16b}, there exist infinitely many squarefree integers $d$ such that the $d$-th quadratic twist $C_{d \cdot P(T)}$ has no $\Qq$-rational point. It then remains to use Lemma \ref{twisting explicit} to get the desired conclusion. Finally, assume that either $P(T)$ has degree 3 or $P(T)$ has degree 4 and a root in $\Qq$. Then, $E/\Qq(T)$ has a $\Qq$-rational branch point by Lemma \ref{lemma 1}. Up to applying a change of variable, we may assume that this branch point is $\infty$, i.e., we may assume that $P(T)$ has degree 3; see Lemma \ref{lemma 1}. Moreover, we may assume that $P(T)=T^3+bT+c$, where $b$ and $c$ are in $\Zz$, i.e., $C_{P(T)}$ is elliptic. The desired conclusion then follows from the following classical two results (and Lemma \ref{twisting explicit}):

\noindent
- $C_{P(T)}$ has infinitely many quadratic twists with Mordell-Weil rank 0 (see, e.g., \cite{Dab08} for references),

\noindent
- there exist only finitely many squarefree integers $d$ such that $C_{d \cdot P(T)}$ has a non-trivial torsion point; see \cite[Proposition 1]{GM91}.

\begin{remark}
(1) As shown in \cite{DD09}, the conclusion of Proposition \ref{r leq 4} fails in general if $\Qq$ is replaced by a larger number number field $k$.

\vspace{1mm}

\noindent
(2) To our knowledge, whether a given $\Qq$-regular quadratic extension of $\Qq(T)$ with at least 6 branch points is parametric over $\Qq$ is an open problem in general. However,

\noindent
- \cite[Proposition 5.6]{Leg16b} shows that the proportion of $\Qq$-regular quadratic extensions of $\Qq(T)$ with given number of branch points, ``height" at most $H$, and that are not parametric tends to 1 as $H$ tends to $\infty$,

\noindent
- by \cite[Corollary 1 and Conjecture 1]{Gra07} and Lemma \ref{twisting explicit}, each $\Qq$-regular quadratic extension of $\Qq(T)$ with 6 branch points or more is not parametric over $\Qq$ under conjectures (for example, the {\it{abc}}-conjecture).
\end{remark}

\bibliography{Biblio2}
\bibliographystyle{alpha}

\end{document}